\begin{document}
\setlength{\baselineskip}{14pt}

\parindent 0.5cm
\evensidemargin 0cm \oddsidemargin 0cm \topmargin 0cm \textheight 22cm \textwidth 16cm \footskip 2cm \headsep
0cm

\newtheorem{theorem}{Theorem}[section]
\newtheorem{lemma}{Lemma}[section]
\newtheorem{proposition}{Proposition}[section]
\newtheorem{definition}{Definition}[section]
\newtheorem{example}{Example}[section]
\newtheorem{corollary}{Corollary}[section]

\newtheorem{remark}{Remark}[section]

\numberwithin{equation}{section}

\def\be{\begin{equation}}
\def\ee{\end{equation}}

\def\p{\partial}
\def\I{\textit}
\def\R{\mathbb R}
\def\C{\mathbb C}
\def\u{\underline}
\def\l{\lambda}
\def\a{\alpha}
\def\O{\Omega}
\def\e{\epsilon}
\def\ls{\lambda^*}
\def\D{\displaystyle}
\def\wyx{ \frac{w(y,t)}{w(x,t)}}
\def\imp{\Rightarrow}
\def\tE{\tilde E}
\def\tX{\tilde X}
\def\tH{\tilde H}
\def\tu{\tilde u}
\def\d{\mathcal D}
\def\aa{\mathcal A}
\def\DH{\mathcal D(\tH)}
\def\bE{\bar E}
\def\bH{\bar H}
\def\M{\mathcal M}
\renewcommand{\labelenumi}{(\arabic{enumi})}
\def\p{\partial}
\def\I{\textit}
\def\R{\mathbb R}
\def\C{\mathbb C}
\def\l{\lambda}
\def\a{\alpha}
\def\O{\Omega}
\def\e{\epsilon}
\def\ls{\lambda^*}
\def\D{\displaystyle}
\def\wyx{ \frac{w(y,t)}{w(x,t)}}
\def\imp{\Rightarrow}
\def\tE{\tilde E}
\def\tX{\tilde X}
\def\tH{\tilde H}
\def\tu{\tilde u}
\def\d{\mathcal D}
\def\aa{\mathcal A}
\def\DH{\mathcal D(\tH)}
\def\bE{\bar E}
\def\bH{\bar H}
\def\M{\mathcal M}
\renewcommand{\labelenumi}{(\arabic{enumi})}

\def\ds{\displaystyle}
\def\undertex#1{$\underline{\hbox{#1}}$}
\def\card{\mathop{\hbox{card}}}
\def\sgn{\mathop{\hbox{sgn}}}
\def\exp{\mathop{\hbox{exp}}}
\def\OFP{(\Omega,{\cal F},\PP)}
\newcommand\JM{Mierczy\'nski}
\newcommand\RR{\ensuremath{\mathbb{R}}}
\newcommand\CC{\ensuremath{\mathbb{C}}}
\newcommand\QQ{\ensuremath{\mathbb{Q}}}
\newcommand\ZZ{\ensuremath{\mathbb{Z}}}
\newcommand\NN{\ensuremath{\mathbb{N}}}
\newcommand\PP{\ensuremath{\mathbb{P}}}
\newcommand\abs[1]{\ensuremath{\lvert#1\rvert}}

\newcommand\normf[1]{\ensuremath{\lVert#1\rVert_{f}}}
\newcommand\normfRb[1]{\ensuremath{\lVert#1\rVert_{f,R_b}}}
\newcommand\normfRbone[1]{\ensuremath{\lVert#1\rVert_{f, R_{b_1}}}}
\newcommand\normfRbtwo[1]{\ensuremath{\lVert#1\rVert_{f,R_{b_2}}}}
\newcommand\normtwo[1]{\ensuremath{\lVert#1\rVert_{2}}}
\newcommand\norminfty[1]{\ensuremath{\lVert#1\rVert_{\infty}}}

\title{Spectral theory for   nonlocal dispersal operators  with time periodic
indefinite weight functions  and applications}

\author{
Wenxian Shen  \\
Department of Mathematics and Statistics\\
Auburn University\\
Auburn, AL 36849, U.S.A. \\\\
Xiaoxia Xie\\
Department of Applied  Mathematics\\
Illinois Institute of Technology\\
Chicago, IL 60616, U.S.A.
}

\date{}
\maketitle

\noindent {\bf Abstract.}
In this paper, we study the spectral theory for  nonlocal dispersal operators  with
time periodic indefinite weight functions subject to  Dirichlet type, Neumann type and spatial periodic type boundary conditions.
 We first  obtain  necessary and sufficient  conditions for the existence of a unique positive principal spectrum point for such operators. We then investigate
upper bounds of principal spectrum points and sufficient conditions for the principal spectrum points
to be principal eigenvalues. Finally we    discuss the applications to nonlinear mathematical models from biology.

\bigskip

\noindent {\bf Key words.}
Nonlocal dispersal operator, time periodic weight function, principal spectrum point,  principal eigenvalue, KPP equations
\bigskip

\noindent {\bf Mathematics subject classification.} 45C05, 45M05, 45M20, 47G10, 92D25.

\newpage

\section{Introduction}
\setcounter{equation}{0}

Random dispersal operators, such as $u(t, \cdot)\mapsto \Delta u(t, \cdot)$ with proper boundary condition, are often adopted in modeling dissipative systems in applied sciences when the organisms
in the   systems  move randomly between the adjacent spatial locations (see
\cite{ BrCoFl, CaCo, CaCo1, CaCo2,  Co, CoCuPo, Fisher,  Ha, He, He0, HeKo, HessWein, Kolmo, Zhaox}, etc.).
Nonlocal dispersal operators, such
as $u(t, \cdot)\mapsto  \int_{\RR^N}k(y-\cdot)[u(t, y)-u(t, \cdot)]dy$, are applied  in modeling diffusive systems in applied sciences when the systems  exhibit long range internal interactions (see \cite{ FeIsPe,  Fif, GrHiHuMiVi,HuGr, HuMaMiVi,   KaLoSh1, OtDuAl, ShZh0, St}, etc.).   
Diffusive evolution equations with both
 random and  nonlocal dispersals have been widely studied on one hand. On the other hand, there are still many important 
dynamical issues for such systems which are not well understood yet.

  The current paper is devoted to the study of principal spectrum points/principal eigenvalues of  nonlocal  dispersal operators    with time periodic indefinite weight functions and Dirichlet type, Neumann type, and periodic boundary conditions. More precisely, the eigenvalue problem subject to Dirichlet type boundary condition considered in this paper reads as
\begin{equation}
\label{dirichlet-ori}
\begin{cases}
 -\p_t u(t, x)+\int_D \kappa(y-x)u(t, y)dy-u(t, x)+\lambda_1 m_1(t,x)u(t, x)=0,\quad x\in \bar D,\\
u(t+T, x)=u(t, x),
\end{cases}
\end{equation}
where $D\subset \R^N$ is a smooth bounded domain and $m_1(t, x)\neq 0$ is a continuous, time periodic weight function;  the eigenvalue problem subject to  Neumann type boundary condition considered in this paper reads as
\begin{equation}
\label{neumann-ori}
\begin{cases}
 -\p_t u(t, x)+\int_D \kappa(y-x)[u(t, y)-u(t, x)]dy+\lambda_2 m_2(t,x)u(t, x)=0,\quad x\in \bar D,\\
u(t+T, x)=u(t, x),
\end{cases}
\end{equation}
 where $D\in \R^N$ and $m_2(t, x)$ are as  \eqref{dirichlet-ori}; and the eigenvalue problem   subject to  periodic boundary conditions  considered in this paper reads as 
 \begin{equation}
\label{periodic-ori}
\begin{cases}
 -\p_t u(t, x)+\int_{\R^N} \kappa(y-x)[u(t, y)-u(t, x)]dy+\lambda_3 m_3(t,x)u(t, x)=0,\quad x\in \R^N,\\
u(t+T, x)=u(t, x+p_j{\bf e_j})=u(t, x),
\end{cases}
\end{equation}
where $p_j>0$, ${\bf e_j}=(\delta_{j_1}, \delta_{j_2}, \cdots, \delta_{jN})$ ($\delta_{jk}=1$ if $j=k$ and $\delta_{jk}=0$ if $j\neq k$), and $m_3(t, x)$ is a continuous function with $m_3(t+T, x)=m_3(t, x+p_j{\bf e_j})=m_3(t, x)$  for $j=1, 2, \cdots, N$. 
Throughout the paper, we assume that the nonlocal kernel function  $\kappa(\cdot)$  in \eqref{dirichlet-ori}-\eqref{periodic-ori} satisfies
 the following condition
  \be
\label{K}
\kappa(\cdot)\in C_c(\R^N),\,\  \ds\kappa(0)>0,\,\,   \kappa(-z)=\kappa(z), \,\, \text{ and } \int_{\RR^N} \kappa(z)dz=1\tag{K}.
\ee

The eigenvalue problems \eqref{dirichlet-ori}, \eqref{neumann-ori}, and \eqref{periodic-ori} can be viewed as  the nonlocal  dispersal counterparts of the following eigenvalue problems associated to random dispersal operators with time periodic indefinite functions and Dirichlet, Neumann, and periodic boundary conditions,
 \begin{equation}
\label{random-dirichlet-op}
\begin{cases}
 -\p_t u(t, x)+\Delta u(t, x)+\lambda_1m_1(t,x)u(t, x)=0 \, & x\in  D,\\
 u(t, x)=0, & x\in \p D,\\
 u(t, x)=u(t+T, x),
\end{cases}
\end{equation}
 \begin{equation}
\label{random-neumann-op}
\begin{cases}
 -\p_t u(t, x)+\Delta u(t, x)+\lambda_2 m_2(t,x)u(t, x)=0 \, & x\in  D,\\
 \frac{\p u}{\p {\bf n}}(t, x)=0, & x\in \p D,\\
 u(t, x)=u(t+T, x),
\end{cases}
\end{equation}
 and
  \begin{equation}
\label{random-periodic-op}
\begin{cases}
 -\p_t u(t, x)+\Delta u(t, x)+\lambda_3 m_3(t,x)u(t, x) =0\, & x\in  \R^N,\\
 u(t+T, x)=u(t, x+p_j{\bf e_j})=u(t, x),
\end{cases}
\end{equation}
 respectively (see \cite{ShXi1} and references therein  for the relation between nonlocal dispersal operators with Dirichlet type, Neumann type,  and periodic boundary conditions, and random dispersal operators with Dirichlet, Neumann, and periodic boundary conditions).

The eigenvalue problems of random dispersal operators with indefinite weight functions have  been extensively  for more than two decades (see, e.g. \cite{BeHe, Boc, Bo, BrCoFl, BrLi, CaCo1, Co, CoCuPo, He, He0, HeKo, HiKaLa, KaLoYa, LoYa, SeHe} and
references therein). For $i=1$ ($i= 2$, or $i= 3$, respectively), recall that a real number   $\lambda_1^{r,p}$ ($\lambda_2^{r,p}$ or $\lambda_3^{r,p}$, respectively)
 is called a {\it principal eigenvalue} of \eqref{random-dirichlet-op} (\eqref{random-neumann-op}, or \eqref{random-periodic-op}, respectively)
 if
\eqref{random-dirichlet-op} (\eqref{random-neumann-op}, or \eqref{random-periodic-op}, respectively) with
 $\lambda_1=\lambda_1^{r,p}$ ($\lambda_2=\lambda_2^{r,p}$, or  $\lambda_3=\lambda_3^{r,p}$, respectively)  has a positive solution (called {\it  eigenfunction}).
The eigenvalue problem \eqref{random-dirichlet-op}, \eqref{random-neumann-op}, or \eqref{random-periodic-op} are closely related to the following regular  eigenvalue problems,
  \begin{equation}
\label{random-dirichlet-op-eq}
\begin{cases}
 -\p_t u(t, x)+\Delta u(t, x)+\lambda_1m_1(t,x)u(t, x)=\mu_1 u(t, x) \, & x\in  D,\\
 u(t, x)=0, & x\in \p D,\\
 u(t, x)=u(t+T, x),
\end{cases}
\end{equation}
 \begin{equation}
\label{random-neumann-op-eq}
\begin{cases}
 -\p_t u(t, x)+\Delta u(t, x)+\lambda_2 m_2(t,x)u(t, x)=\mu_2 u(t, x) \, & x\in  D,\\
 \frac{\p u}{\p {\bf n}}(t, x)=0, & x\in \p D,\\
 u(t, x)=u(t+T, x),
\end{cases}
\end{equation}
\begin{equation}
\label{random-periodic-op-eq}
\begin{cases}
 -\p_t u(t, x)+\Delta u(t, x)+\lambda_3 m_3(t,x)u(t, x) =\mu_3u(t, x)\, & x\in  \R^N,\\
 u(t+T, x)=u(t, x+p_j{\bf e_j})=u(t, x).
\end{cases}
\end{equation}
For $i=1, 2, 3$, and any given $\lambda_i$, let $\mu^{r, p}_{i}(\lambda_i)$ be the principal eigenvalue of  \eqref{random-dirichlet-op-eq}, \eqref{random-neumann-op-eq}, and \eqref{random-periodic-op-eq}, respectively.
Then $\lambda_1^{r,p}$ (resp. $\lambda_2^{r,p}$, $\lambda_3^{r,p}$) is a principal eigenvalues of
\eqref{random-dirichlet-op} (resp. \eqref{random-neumann-op}, \eqref{random-periodic-op}) if and only if
$\mu^r_1(\lambda_1^{r,p})=0$ (resp. $\mu^r_2(\lambda_2^{r,p})=0$, $\mu^r_3(\lambda_3^{r,p})=0$).

Thanks to its applications in the nonlinear mathematical models, in particular, the population dynamics in biology, the existence of positive principal eigenvalues of  \eqref{random-dirichlet-op}, \eqref{random-neumann-op}, and \eqref{random-periodic-op} is of particular interest and has
been well studied. The time independent version was first studied by Manes and Micheletti in \cite{MaMi}. Then, Hess and Kato   in \cite{HeKo} and Brown and Lin  in \cite{BrLi} obtained some results for   elliptic operators  in Dirichlet boundary case.
 The proofs of Brown and Lin and Manes and Micheletti are based
on the variational characterization of the principal eigenvalue; the proof of Hess and Kato uses Krein-Rutman's theorem. For more general  elliptic operators subject to various boundary conditions,
we refer to \cite{ Boc, Bo, BrCoFl,  BrLi,  CaCo1,   GoLa, He, HeKo,  Lo-Go0, Lo-Go,  SeHe}  and references therein,  and  for the applications in population dynamics, such as the optimization of spatial arrangement of favorable and unfavorable regions for a species to survive in biological context, we refer to \cite{ CaCo, CaCo2, Co, CoCuPo, HiKaLa,  KaLoYa, LoYa},  etc.

  Among others,  in the time independent case, it is proved that \eqref{random-dirichlet-op} with $m_1(t, x)=m_1(x)$ admits a unique positive principal eigenvalue  if  and only if
\be
\label{random-auto-diri-condi}
m(x_0)>0 \quad \text{ for some } x_0\in D
\ee
(see \cite{GoLa, HeKo, He}),
and \eqref{random-neumann-op} with $m_2(t,x)=m_2(x)$ admits a unique positive principal eigenvalue if and only if
\begin{equation}
\label{random-auto-neum-condi}
m_2(x_0)>0 \quad\text{for some}\,\, x_0\in D\,\,\, \text{and}\,\,\int_D m_2(x)dx<0
\end{equation}
(see \cite{He}).
In the time periodic case,
it is proved that   \eqref{random-dirichlet-op}  has a unique positive principal eigenvalue   if and only if
\be
\label{R-D}
\int_0^T\max_{x\in \bar D}m_1(t, x)dt>0\tag{R-D}
\ee
(see \cite{He0}), and
 \eqref{random-neumann-op} has a unique positive principal eigenvalue   if
\be
\label{R-N}
\int_0^T\max_{x\in \bar D}m_2(t, x)dt>0 \text{ and } \int_D\int_0^Tm_2(t, x)dtdx<0\tag{R-N}
\ee
(see \cite{He0}). There is no result for random dispersal operator subject to the spatial periodic boundary condition so far. But  as we can see in the proofs for nonlocal dispersal operator, results in  spatial periodic boundary case are very similar to those in Neumann boundary case.

The study of \eqref{dirichlet-ori}, \eqref{neumann-ori}, and \eqref{periodic-ori} is of great interest in its own and  will also have important applications
in the study of  many nonlinear mathematical models with nonlocal dispersal from applied science, including
 the following time periodic dispersal evolution equations,
 \begin{equation}
 \label{n-kpp-diri}
 \begin{cases}
 \p_t u=\int_{D} \kappa(y-x)u(t,y)dy-u(t,x)+\lambda_1 uf_1(t,x,u),\quad x\in \bar D,\\
 u(t+T, x)=u(t, x),\\
 u(0, x)\geq 0, u(0, x)\not\equiv 0,
 \end{cases}
 \end{equation}
  \begin{equation}
 \label{n-kpp-neum}
 \begin{cases}
 \p_t u=\int_{D} k(y-x)[u(t,y)-u(t,x)]dy+\lambda_2 uf_2(t,x,u),\quad x\in \bar D,\\
 u(t+T, x)=u(t, x),\\
 u(0, x)\geq 0, u(0, x)\not\equiv 0,
 \end{cases}
 \end{equation}
and
  \begin{equation}
 \label{n-kpp-peri}
\begin{cases}
 \p_t u=\int_{\R^N} k(y-x)u(t,y)dy-u(t,x)+\lambda_3 uf_3(t,x,u),\quad x\in \R^N,\cr
u(t+T, x+p_i{\bf e_i})=u(t, x+p_i{\bf e_i})=u(t, x),\\
 u(0, x)\geq 0, u(0, x)\not\equiv 0,
\end{cases}
 \end{equation}
where $u(0, x)$ is continuous and bounded,  $\lambda_i >0$,  and $f_i(t, x, u)$ satisfies the following condition $(i=1,2,3)$,
 \medskip

 \noindent {\bf (F)} {\it  $f_i$ is $C^1$ in $t\in\RR$ and  $C^3$ in $(x,u)\in \RR^N\times\RR$; $f_i(t, x, u)<0$ for $u\gg 1$ and $\partial _u f_i(t, x, u)<0$ for $u\geq0$};
 $f_i(t+T,x,u)=f_i(t,x,u)$; and when $i=3$, $f_i(t+T, x,u)=f_i(t, x+p_j{\bf e_j}, u)=f(t,x,u)$ for $j=1, 2, \cdots, N$.

  \medskip


In \eqref{n-kpp-diri}, \eqref{n-kpp-neum}, and \eqref{n-kpp-peri},  $u(t, x)$ represents the density of a species at location $x$ and time $t$, and  $\lambda_i f(t, x, 0) (i=1, 2, 3)$ represents the intrinsic growth rate of a species. Hence only non-negative solutions of \eqref{n-kpp-diri}, \eqref{n-kpp-neum}, and \eqref{n-kpp-peri} are of interest. Notice that $u\equiv 0$ is a solution to  \eqref{n-kpp-diri}, \eqref{n-kpp-neum}, and \eqref{n-kpp-peri}. It is of great interest to know for which $\lambda_1$ (resp. $\lambda_2$, $\lambda_3$),
there is a positive  solution of \eqref{n-kpp-diri} (resp. \eqref{n-kpp-neum},  \eqref{n-kpp-peri}).
As in the random dispersal case,
the spectral theory for the eigenvalue problems \eqref{dirichlet-ori}, \eqref{neumann-ori}, and \eqref{periodic-ori} will play an
important role in the study of positive solutions of \eqref{n-kpp-diri}, \eqref{n-kpp-neum},  and \eqref{n-kpp-peri}).

However, little is known about the eigenvalue problems of
nonlocal dispersal operators with (time periodic) indefinite weight functions.
The  objective of this paper is to investigate the principal spectrum points /eigenvalues (if exists) of the nonlocal time-periodic weighted eigenvalue problems    with Dirichlet type boundary condition \eqref{dirichlet-ori}, Neumann type boundary condition  \eqref{neumann-ori} and spatial periodic type boundary condition  \eqref{periodic-ori}, respectively.
Note that, unlike random dispersal operators, a nonlocal dispersal operator may not have a principal eigenvalue (see \cite{Cov, ShZh0} for examples).   We hence first introduce the notion of the principal spectrum points of \eqref{dirichlet-ori}, \eqref{neumann-ori} and \eqref{periodic-ori}, which are the natural generalization of notion of the principal eigenvalue  of \eqref{random-dirichlet-op}, \eqref{random-neumann-op},  and \eqref{random-periodic-op}. Moreover, due to the lack of compactness of nonlocal operators,  the Krein-Rutman's theorem is not  applicable, and there is in general no variational characterization for time-periodic nonlocal dispersal operators. We overcome the difficulties by developing various new techniques,
 and prove  the necessary and sufficient conditions for the existence of positive principal
spectrum points of  \eqref{dirichlet-ori}, \eqref{neumann-ori},
and \eqref{periodic-ori}, respectively. We also investigate the upper bounds of principal spectrum points and  the sufficient conditions for the principal spectrum points to be principal eigenvalues. As an application, we study asymptotic dynamics of
 \eqref{n-kpp-diri}, \eqref{n-kpp-neum}, and \eqref{n-kpp-peri}  by the spectral theory of weighted nonlocal dispersal operators.

The rest of this paper is organized as follows. In section 2, we  introduce notations, definitions and  state the main results of the paper.  In section 3, we present some preliminary materials to be used in the proof of the main results.  We prove the main results and discuss the application of the  main results in  section 4.

\section{Notations, Definitions, and Main Results}

In this section, we introduce notations, definitions, and state the main results.

We first introduce some standing notations and the concept of principal spectrum points of nonlocal dispersal operators
with time periodic indefinite weight functions.

 Let
\be
\label{space-t12}
\mathcal X_1=\mathcal X_2=\{u\in C(\R\times \bar D, \R)| u(t+T, x)=u(t, x)\}
\ee
with norm $\|u\|_{\mathcal X_i}=\sup_{t\in \R, x\in \bar D}|u(t, x)| (i=1, 2)$,
\be
\label{space-t3}
\mathcal X_3=\{u\in C(\R\times \R^N, \R)|u(t+T, x)=u(t, x+p_j{\bf e_j})=u(t, x) \text{ for } j=1,2,\cdots,N\}
\ee
with norm $\|u\|_{\mathcal X_3}=\sup_{t\in\R, x\in\R^N}|u(t, x)|$. Set
\[
\mathcal X_i^+=\{u\in \mathcal X_i|u\geq 0\}
\]
and
\[
 \mathcal X_i^{++}=\text{Int}{X_i^+}=\{\phi\in \mathcal X_i, \phi>0\}
\]
($i=1, 2, 3$).
 We define the integral operator $K_i$,   the multiplication operator $B_i$, and  the domain $D_i$   $(i=1,2,3)$  as follows,
\begin{equation}
\label{Ki}
K_i: \mathcal X_i\to\mathcal X_i,\,\, K_iu(t, x)=
\begin{cases}
\int_D \kappa(y-x)u(t, y)dy\quad &\forall u\in X_i,\quad i=1,2,\\
\int_{\RR^N}\kappa(y-x)u(t, y)dy\quad&\forall u\in X_3,
\end{cases}
\end{equation}
\be
\label{bi}
B_i: \mathcal X_i\to \mathcal X_i, B_iu=b_i u \,\ \text{ with }\,\,\,
b_i(x)=
\begin{cases}
1 \, & \text{ for } i=1, 3,\\
\int_D\kappa(y-x)dy & \text{ for } i=2,
\end{cases}
\ee
and
\begin{equation}
\label{D-i}
D_i=\begin{cases}  D\quad &{\rm for}\,\, i=1,2,\cr
[0,p_1]\times[0,p_2]\times [0,p_N] &{\rm for}\,\, i=3.
\end{cases}
\end{equation}
For $1\leq i\leq 3$, set
\be
\label{Li}
(\mathcal L_iu)(t, x)=-\p_tu(t, x)+(K_iu)(t, x)-(B_iu)(t, x)
\ee
with domain
\[
\mathcal D (\mathcal L_i)=\{u\in \mathcal X_i| u \text{ is } C^1 \text{ in } t \text{ and } u_t\in \mathcal X_i\}.
\]
Then for $i=1,2,3$, the eigenvalue problems \eqref{dirichlet-ori}, \eqref{neumann-ori}, and \eqref{periodic-ori}  in the space $\mathcal X_i$  can be written uniformly as
\begin{equation}
\label{EP-lambdai}
\mathcal L_iu+\lambda_i m_i u=0,
\end{equation}
Observe that  $K_2=K_1$, $B_3=B_1$ and $D_2=D_1$. The introduction of $K_2$, $B_3$,   and $D_2$ is for convenience.

For any given real number $\lambda_i$, the weighted eigenvalue problem \eqref{EP-lambdai}  is closely related to the  following regular eigenvalue problem on $\mathcal X_i$,
\be
\label{EP-mui}
\mathcal L_i u+\lambda_i m_iu=\mu_i  u.
\ee
 Let  $\sigma_i(\mathcal{L}_i+\lambda_i m_i)$  be the spectrum set of \eqref{EP-mui}
(i.e. the spectrum set of the operator $\mathcal{L}_i+\lambda_im_i$ in $\mathcal{X}_i$) and  $$\ds\mu_i^n(\mathcal{L}_i+\lambda_i m_i)=\sup\{{\rm Re}\mu|\mu\in\sigma_i(\mathcal{L}_i+\lambda_i m_i)\}.
$$

\begin{definition}
\label{PSP-def}
\begin{itemize}
\item[(1)]
$\mu_i^n(\mathcal{L}_i+\lambda_i m_i)$ is called
 {\it the principal spectrum point} of the regular eigenvalue problem  \eqref{EP-mui}.
The principal spectrum point $\mu_i^n(\mathcal{L}_i+\lambda_i m_i)$ is called {\it the principal
eigenvalue} of \eqref{EP-mui}  if \eqref{EP-mui} with $\mu_i=\mu_i^n(\mathcal{L}_i+\lambda_i m_i)$ has a
positive solution in $\mathcal{X}_i$.

\item[(2)]
A real  number $\lambda_i^p(m_i)$ is called
a {\it principal spectrum point}  of the weighted eigenvalue problem  \eqref{EP-lambdai} if $\mu_i^n(\mathcal{L}_i+\lambda_i^p(m_i) m_i)=0$. When $\lambda_i^p(m_i)$ is a principal spectrum
point of \eqref{EP-lambdai} and $\mu_i^n(\mathcal{L}_i+\lambda_i^p(m_i) m_i)$ is a principal eigenvalue of \eqref{EP-mui}, $\lambda_i^p(m_i)$ is also
called a {\it principal eigenvalue} of \eqref{EP-lambdai}.
\end{itemize}
\end{definition}

If not confusion occurs, we may write $\mu_i^n(\mathcal{L}_i+\lambda_i m_i)$ and $\lambda_i^p(m_i)$ as
$\mu_i^n(\lambda_i)$ and $\lambda_i^p$, respectively.

Let
\begin{equation*}
\label{x-d-space}
X_1=X_2=\{u\in C(\bar D, \R)\}
\end{equation*}
with norm $\|u\|_{X_i}=\sup_{x\in\bar D}|u(x)|$ ($i=1,2$),
\begin{equation*}
\label{x-p-space}
X_3=\{u\in C(\RR^N,\RR)\,|\, u(x+p_j{\bf e_j})=u(x),\quad x\in\R^N,\, j=1,2,\cdots,N\}
\end{equation*}
with norm $\|u\|_{X_3}=\max_{x\in\RR^N}|u(x)|$. Set
\begin{equation*}
\label{x-d-positive-cone}
X_i^+=\{u\in X_i\,|\, u\geq 0\},
\end{equation*}
and
\begin{equation*}
\label{x-d-positive-interior}
X_i^{++}=
\begin{cases}
\{u\in X_i^+\,|\, u(x)>0,\quad x\in \bar D\},\quad & i=1,2,\\
\{u\in X_i^+\,|\, u(x)> 0,\quad x\in\RR^N\},\quad & i=3.
\end{cases}
\end{equation*}
In the case that $m_i(t, x)\equiv m_i(x)$, consider
\begin{equation}
\label{autonomous-weight-pev}
K_iu-B_iu+\lambda_i m_iu=0,
\end{equation}
and
\begin{equation}
\label{autonomous-pev}
K_iu-B_iu+\lambda_i m_iu=\mu_i u,
\end{equation}
in $X_i$ ($i=1,2,3$).  Similarly, define
 {\it the principal spectrum  point} $\mu_i^n(\lambda_i)$ of the regular eigenvalue problem
 \eqref{autonomous-pev} to be the largest real part of the spectrum set of \eqref{autonomous-pev}.
We then call a real number  $\lambda_i^p$ {\it the principal spectrum point} of the weighted eigenvalue problem
\eqref{autonomous-weight-pev}
if $\mu_i^n(\lambda_i^p)=0$.

Note that when $m_i(t,x)\equiv m_i(x)$, the principal spectrum point of \eqref{EP-mui} and the principal spectrum point of
\eqref{autonomous-pev} are the same (see Proposition \ref{spectrum-autonomous}).

 Note also that  \eqref{EP-mui}, as well as its time independent version \eqref{autonomous-pev}, is an eigenvalue problem in regular sense for $i=1, 2, 3$, as we can see from Definition \ref{PSP-def} (1).   Many  properties of  the principal spectrum point/principal eigenvalue  have been studied  extensively (see \cite{BaZh, Cov, HuShVi, RaSh, ShXi1, ShZh0} etc.). And we will recall the basic properties and prove some new properties of the principal spectrum point/principal eigenvalue of \eqref{EP-mui} in subsection 3.3.

 However, little is known about the eigenvalue problem \eqref{EP-lambdai}.  We are the first to study the principal spectrum point/principal eigenvalue of nonlocal dispersal operators with time periodic indefinite weight function.  Definition \ref{PSP-def} (2) is a natural generalization of the principal eigenvalue of random dispersal operators with time periodic indefinite weight functions.  And the necessary and sufficient conditions for the existence of positive principal spectrum point of \eqref{EP-lambdai} are in Theorem \ref{PSP-diri}-\ref{PSP-peri}, and the properties of  the  principal spectrum point of \eqref{EP-lambdai}  are in Theorem \ref{upper-bounds}.

For $i=1, 2, 3$, let
\be
\label{hat-mi}
\widehat m_i(x)=\frac{1}{T}\int_0^T m_i(t,x)dt,
\ee
\be
\label{mi-max-min}
\widehat m_{i, \max}=\max_{x\in \bar D_i}m_i(x), \text{ and } \, \widehat m_{i, \min}=\min_{x\in \bar D_i}m_i(x),
\ee
 and
\be
\label{P}
\mathcal P(m_i)=\int_0^T\max_{x\in\bar D_i}m_i(t, x)dt.
\ee

We now state the main results of the paper.
We first  state the main results on  the  necessary and sufficient conditions for the existence and uniqueness of positive
principal spectrum points of \eqref{dirichlet-ori}, \eqref{neumann-ori}, and \eqref{periodic-ori}.

 \begin{theorem}[Necessary and sufficient  condition in the Dirichlet boundary case]
 \label{PSP-diri}
Suppose $\kappa(\cdot)$ satisfies (K) and $m_1\in\mathcal X_1$.  The eigenvalue problem \eqref{dirichlet-ori} has exactly one positive principal spectrum point, denoted by $\lambda_1^p$, if and only if
\be
\label{D}
\mathcal P(m_1)>0\tag{D}.
\ee
 More precisely, we show the following.
 \begin{itemize}
\item[(1)] There is $\lambda_1^p>0$ such that $\mu_1^n(\lambda_1^p)=0$ if and only if
(D) holds.
\item[(2)] If $\lambda_1^{p, 1},\lambda_1^{p, 2}>0$ are such that $\mu_1^n(\lambda_1^{p, 1})=\mu_1^n(\lambda_1^{p, 2})=0$, then $\lambda_1^{p, 1}=\lambda_1^{p, 2}$.
\end{itemize}
\end{theorem}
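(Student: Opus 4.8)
The plan is to study the map $\lambda_1 \mapsto \mu_1^n(\lambda_1)$ and extract its qualitative features, since by Definition~\ref{PSP-def}(2) a positive principal spectrum point of \eqref{dirichlet-ori} is exactly a positive root of $\mu_1^n(\lambda_1)=0$. First I would record the basic properties of $\mu_1^n(\cdot)$ that should have been established in subsection~3.3 for the regular problem \eqref{EP-mui}: continuity in $\lambda_1$, the value at $\lambda_1=0$ (namely $\mu_1^n(0)$ equals the principal spectrum point of the unweighted Dirichlet nonlocal operator $\mathcal L_1$, which is strictly negative because $\kappa$ integrates to $1$ over $\R^N$ but only over $D$ in $K_1$, so mass is lost), convexity (or at least a monotonicity/strict-monotonicity statement on the relevant range), and — crucially — the asymptotic behavior of $\mu_1^n(\lambda_1)$ as $\lambda_1\to+\infty$. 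The heart of the argument is to show that $\mu_1^n(\lambda_1)\to+\infty$ as $\lambda_1\to+\infty$ precisely when $\mathcal P(m_1)=\int_0^T\max_{x\in\bar D}m_1(t,x)\,dt>0$, and stays bounded above by a nonpositive quantity otherwise.

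For part~(1), the "if" direction: assuming $\mathcal P(m_1)>0$, I would construct a subsolution (test function) showing $\mu_1^n(\lambda_1)$ is large for large $\lambda_1$. The natural candidate is a function of the form $u(t,x)=\exp\big(\lambda_1\int_0^t m_1(s,x_*)\,ds - \lambda_1\widehat m_1(x_*)\,t\big)\cdot\varphi(x)$ localized near a point $x_*$ where $t\mapsto m_1(t,x_*)$ has $\frac1T\int_0^T m_1(t,x_*)dt$ close to $\frac1T\mathcal P(m_1)$ — but since $\max$ is taken pointwise in $t$, the sharper route is to use the time-dependent change of variables $u(t,x)=e^{\lambda_1\int_0^t (m_1(s,x)-\widehat m_1(x))ds}\,v(t,x)$ to conjugate $\mathcal L_1+\lambda_1 m_1$ to an operator of the form $\mathcal L$ (with a transformed, now $\lambda_1$-dependent kernel) plus the multiplier $\lambda_1\widehat m_1(x)$; then the principal spectrum point is controlled below by $-1+\lambda_1\max_x\widehat m_1(x)$ minus error terms, and one checks that $\mathcal P(m_1)>0$ forces $\max_x\widehat m_1(x)>0$ after an averaging argument... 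Actually the cleanest formulation uses the known variational-type lower bound $\mu_1^n(\lambda_1)\ge \frac1T\int_0^T\big[\text{(nonlocal Rayleigh quotient term)} + \lambda_1 m_1(t,x)\big]dt$ evaluated at cleverly chosen time-periodic test functions concentrating near the spatial argmax of $m_1(t,\cdot)$ for each $t$; combined with $\kappa(0)>0$ this yields $\mu_1^n(\lambda_1)\ge \frac{\lambda_1}{T}\mathcal P(m_1) - C$ for a constant $C$ independent of $\lambda_1$, hence $\mu_1^n(\lambda_1)\to+\infty$. Since $\mu_1^n(0)<0$ and $\mu_1^n$ is continuous, the intermediate value theorem gives a root $\lambda_1^p>0$.

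For the "only if" direction of part~(1): assuming $\mathcal P(m_1)\le 0$, I would show $\mu_1^n(\lambda_1)<0$ for all $\lambda_1>0$ (so no positive root can exist), using an upper bound on $\mu_1^n$. Again via the time-change conjugation $u=e^{\lambda_1\int_0^t(m_1-\widehat m_1)ds}v$, one reduces to an operator whose principal spectrum point, by the known formula for nonlocal operators of the type $\mathcal K - b + c(x)$, is at most $\max_x\big(\widehat c(x) - b(x) + \text{(sup of transformed kernel integral)}\big)$; the point is that the $\max_x$ of the time-averaged potential is controlled by $\frac1T\mathcal P(m_1)\le0$, and the Dirichlet loss of mass ($\int_D\kappa(y-x)dy<1$ near $\partial D$, combined with $\kappa$ compactly supported) makes the remaining terms strictly negative. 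Finally, part~(2), uniqueness: I would show $\mu_1^n(\cdot)$ is strictly increasing on $(0,\infty)$ once it is nonnegative — or more robustly, that $\lambda\mapsto\mu_1^n(\lambda)$ is convex with $\mu_1^n(0)<0$, so it can cross zero at most once on the positive axis. Convexity follows from writing $\mu_1^n(\lambda)$ as a sup over a family of linear-in-$\lambda$ functionals (the Donsker–Varadhan / Collatz–Wielandt type characterization available for these time-periodic nonlocal operators), and strict monotonicity past the first zero follows from a strong-positivity/perturbation argument using $\kappa(0)>0$ (which guarantees irreducibility of the associated positive semigroup on the relevant component).

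I expect the main obstacle to be the lower bound $\mu_1^n(\lambda_1)\ge \frac{\lambda_1}{T}\mathcal P(m_1)-C$: because there is no variational characterization for time-periodic nonlocal operators and no compactness, one cannot simply plug a minimizer into a Rayleigh quotient. The workaround is to exploit the exponential-of-integral conjugation to absorb the time-oscillation of $m_1$ into a kernel transformation, reducing to estimating the principal spectrum point of an autonomous-in-flavor nonlocal operator with potential $\lambda_1\widehat m_1(x)$ plus controlled corrections — and then separately handling the subtlety that $\mathcal P(m_1)>0$ does not immediately give $\max_x\widehat m_1(x)>0$ (since the $\max$ over $x$ is inside the time integral), which requires either a localization/partition argument in $x$ or passing through an auxiliary operator on a small ball where $m_1$ is nearly its pointwise-in-$t$ maximum. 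Getting these two reductions to cooperate cleanly, with constants uniform in $\lambda_1$, is the technical crux.
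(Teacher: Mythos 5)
Your overall framework (study $\lambda_1\mapsto\mu_1^n(\lambda_1)$, establish $\mu_1^n(0)<0$, blow-up for large $\lambda_1$ under (D), non-positivity for all $\lambda_1>0$ otherwise, then use convexity for uniqueness) matches the paper's skeleton, and your observation that convexity plus $\mu_1^n(0)<0$ alone forces at most one positive zero is actually cleaner than the paper's case-split in part (2). However, the central technical step — showing $\mu_1^n(\lambda_1)>0$ for $\lambda_1\gg 1$ when $\mathcal P(m_1)>0$ — is not established by your proposal, and the conjugation route you lean on is likely to fail. Conjugating by $e^{\lambda_1\int_0^t(m_1(s,x)-\widehat m_1(x))\,ds}$ does cancel the oscillation in the multiplication term, but it replaces the kernel $\kappa(y-x)$ by $\kappa(y-x)\,e^{\psi(t,y)-\psi(t,x)}$ with $\psi$ proportional to $\lambda_1$; this transformed kernel has weights that grow or decay exponentially in $\lambda_1$, so the "controlled corrections" you invoke are not controlled uniformly, and the resulting operator's principal spectrum point is not comparable to $\lambda_1\max_x\widehat m_1(x)$ plus an error independent of $\lambda_1$. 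Worse, this reduction would try to bound $\mu_1^n(\lambda_1)$ in terms of $\max_x\widehat m_1(x)$, which can be strictly negative while $\mathcal P(m_1)>0$ (take $m_1(t,x)=\cos(2\pi t/T)\,g(x)-\varepsilon$ with sign-changing $g$ and small $\varepsilon$), so this route cannot produce the right conclusion. The missing ingredient is the direct sub-solution construction: one partitions $[0,T]$ into intervals $[t_i,t_{i+1}]$ and chooses balls $B(x_i,r_i)\Subset D$ on which $m_1(t,x)\geq m_i+\delta$ with $\sum_i m_i(t_{i+1}-t_i)>0$, then defines a piecewise test function supported on these moving balls and growing like $e^{\lambda_1 m_i t}$ on each; comparison (Proposition~\ref{comparison}) and the characterization $\mu_1^n=\lim_t \frac{1}{t}\ln\|\Phi_1(t,0)u_0\|$ (Proposition~\ref{spectrum-limit}) then give $\mu_1^n(\lambda_1)>0$ for large $\lambda_1$. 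You mention this localization alternative in passing, but you never commit to it or carry it out, and without it the "if" direction is a gap.

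For the "only if" direction, you are also working too hard: there is no need to conjugate at all. Since $m_1(t,x)\le \widetilde m_1(t):=\max_{x\in\bar D}m_1(t,x)$ pointwise, monotonicity of $\mu_1^n$ in the weight (Proposition~\ref{property-PSP-prop}(3)) gives $\mu_1^n(\mathcal L_1+\lambda_1 m_1)\le \mu_1^n(\mathcal L_1+\lambda_1\widetilde m_1)$, and because $\widetilde m_1$ is $x$-independent, Proposition~\ref{spectrum-autonomous}(2) gives the exact formula $\mu_1^n(\mathcal L_1+\lambda_1\widetilde m_1)=\mu_1^n(0)+\frac{\lambda_1}{T}\int_0^T\widetilde m_1(t)\,dt=\mu_1^n(0)+\frac{\lambda_1}{T}\mathcal P(m_1)$. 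Under $\mathcal P(m_1)\le 0$ this is $\le\mu_1^n(0)<0$ for every $\lambda_1\ge 0$, which is the contradiction. This two-line comparison is the paper's actual argument and avoids all the kernel-transformation bookkeeping you were worried about.
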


\begin{corollary}
\label{PSP-diri-coro}
$\quad$
\begin{itemize}
\item[(1)] If $m_1(t, x)\equiv m_1(t)$, then \eqref{dirichlet-ori} has exactly one positive principal spectrum point $\lambda_1^p$  if and only if
$$
\int_0^T m_1(t)dt>0.
$$
Moreover,  $\lambda_1^p$ is a principal eigenvalue of \eqref{dirichlet-ori} and $\ds \lambda_1^p=\frac{-\bar\lambda_1}{\widehat m_1}$, where
 $\bar \lambda_1(<0)$ is the principal eigenvalue of $K_1-B_1(=K_1-\mathcal I)$.

\item[(2)] If $m_1(t, x)\equiv m_1(x)$ , consider
\begin{equation}
\label{autonomous-dirichlet-ori}
\left[\int_D\kappa(y-x)dy-u(x)\right]+\lambda_1m_1(x)u(x)=0,\quad x\in\bar D.
\end{equation}
There exists a unique positive principal spectrum point $\lambda_1^p$ of  \eqref{autonomous-dirichlet-ori}
if and only if $m_1(x_0)> 0$ for $x_0\in  D_1$.
\end{itemize}
\end{corollary}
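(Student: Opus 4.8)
The plan is to derive both parts of the corollary from Theorem 2.1 together with the known properties of the principal spectrum point $\mu_1^n(\lambda_1)$ of the regular problem \eqref{EP-mui} (recalled in subsection 3.3) and, for part (2), Proposition \ref{spectrum-autonomous}. For part (1), assume $m_1(t,x)\equiv m_1(t)$. Then $\mathcal P(m_1)=\int_0^T\max_{x\in\bar D}m_1(t)\,dt=\int_0^T m_1(t)\,dt$, so the condition (D) in Theorem \ref{PSP-diri} reduces exactly to $\int_0^T m_1(t)\,dt>0$, and the existence-uniqueness of a positive principal spectrum point $\lambda_1^p$ is immediate. To pin down $\lambda_1^p$ explicitly, I would observe that when $m_1=m_1(t)$ depends only on $t$, the operator $\mathcal L_1+\lambda_1 m_1(t)$ is just $\mathcal L_1$ perturbed by a scalar time-periodic multiple of the identity; one checks directly that if $\bar\lambda_1$ is the principal eigenvalue of $K_1-B_1=K_1-\mathcal I$ with positive eigenfunction $\phi_1(x)$, then $u(t,x):=\exp\!\big(\int_0^t(\lambda_1 m_1(s)-\lambda_1\widehat m_1)\,ds\big)\,\phi_1(x)$ lies in $\mathcal X_1$ (the exponential factor is $T$-periodic precisely because $\int_0^T(m_1(s)-\widehat m_1)\,ds=0$) and satisfies $\mathcal L_1 u+\lambda_1 m_1 u=(\bar\lambda_1+\lambda_1\widehat m_1)u$. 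Hence $\mu_1^n(\lambda_1)=\bar\lambda_1+\lambda_1\widehat m_1$ and the associated eigenfunction is positive, so $\mu_1^n(\lambda_1)$ is in fact a principal eigenvalue; setting it to zero gives $\lambda_1^p=-\bar\lambda_1/\widehat m_1$, which is positive since $\bar\lambda_1<0$ and $\widehat m_1=\frac1T\int_0^T m_1(t)\,dt>0$.

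For part (2), assume $m_1(t,x)\equiv m_1(x)$. By the remark preceding the corollary (Proposition \ref{spectrum-autonomous}), the principal spectrum point of \eqref{EP-mui} coincides with that of the time-independent problem \eqref{autonomous-pev}, so a positive principal spectrum point of \eqref{autonomous-dirichlet-ori} exists and is unique iff a positive $\lambda_1^p$ with $\mu_1^n(\lambda_1^p)=0$ exists, which by Theorem \ref{PSP-diri} holds iff $\mathcal P(m_1)=\int_0^T\max_{x\in\bar D}m_1(x)\,dt=T\max_{x\in\bar D}m_1(x)>0$, i.e.\ iff $\max_{x\in\bar D}m_1(x)>0$. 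It remains to upgrade ``$\max_{x\in\bar D_1}m_1(x)>0$'' to ``$m_1(x_0)>0$ for some $x_0\in D_1$'', i.e.\ to move the point of positivity off the boundary into the open domain. Since $m_1$ is continuous on $\bar D$ and $D$ is open with $\bar D_1=\bar D$, if the maximum is attained only on $\partial D$ but is positive there, continuity gives a full neighborhood in $\bar D$ on which $m_1>0$, and that neighborhood meets the open set $D$; conversely $m_1(x_0)>0$ at an interior point trivially forces $\max_{\bar D}m_1>0$. So the two conditions are equivalent.

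The main obstacle I anticipate is the explicit eigenvalue computation in part (1): one must verify carefully that the candidate $u(t,x)$ is genuinely in the domain $\mathcal D(\mathcal L_1)$ and in $\mathcal X_1$ (the $T$-periodicity hinges on $\int_0^T(m_1(s)-\widehat m_1)\,ds=0$), and that the value $\bar\lambda_1+\lambda_1\widehat m_1$ it produces is actually the \emph{top} of the spectrum and not merely \emph{a} point of it — this last point should follow from the monotonicity/uniqueness properties of the principal spectrum point of the regular problem recalled in subsection 3.3 (namely that a positive eigenfunction forces the eigenvalue to be $\mu_1^n$). Everything else is a direct translation of Theorem \ref{PSP-diri} and Proposition \ref{spectrum-autonomous}, plus the elementary continuity argument for part (2).
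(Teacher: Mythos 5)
Your proposal is correct and takes essentially the same route as the paper: for (1) the paper simply cites Proposition \ref{spectrum-autonomous}(2) to get $\mu_1^n(\mathcal L_1+\lambda_1 m_1)=\mu_1^n(0)+\lambda_1\widehat m_1$ and then concludes via Theorem \ref{PSP-diri}, while your explicit ansatz $u(t,x)=\exp\bigl(\lambda_1\int_0^t(m_1(s)-\widehat m_1)\,ds\bigr)\phi_1(x)$ is just an unpacking of the proof of that proposition; for (2) the paper likewise combines Theorem \ref{PSP-diri} with Proposition \ref{spectrum-autonomous}. Your extra continuity argument reconciling $\max_{\bar D_1}m_1>0$ with positivity at an interior point is correct and a harmless elaboration the paper leaves implicit.
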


\begin{remark}
\label{PSP-dirichlet-remark} Theorem \ref{PSP-diri} and Corollary \ref{PSP-diri-coro}
extend the principal eigenvalue theory for random dispersal operators with time independent or time periodic indefinite weight
functions subject to Dirichlet  boundary condition to nonlocal dispersal operators  with time independent or time periodic indefinite weight
functions subject to Dirichlet type  boundary condition.
\end{remark}

\begin{theorem}[Necessary and sufficient  condition in the Neumann boundary case]
\label{PSP-neum}
Suppose that $\kappa(\cdot)$ satisfies (K),    $m_2(\cdot, \cdot)\in\mathcal X_2$ and $m_2(t, x)\not \equiv m_2(t)$.  The eigenvalue problem \eqref{neumann-ori} has exactly one positive principal spectrum point, denoted by $\lambda_2^p$, if and only if
\be
\label{N}
\mathcal P (m_2)>0 \text{   and  } \int_{D_2} \int_0^Tm_2(t, x)dtdx<0.\tag{N}
\ee More precisely, we show the following.
\begin{itemize}
\item[(1)]  There is a  $\lambda_2^p>0$ such that $\mu_2^n(\lambda_2^p)=0$ if and only if (N) holds.

\item[(2)] If $\lambda_2^{p, 1}, \lambda_2^{p, 2}>0$ are such that $\mu_2^n(\lambda_2^{p, 1})=\mu_2^n(\lambda_2^{p, 2})=0$, then
$\lambda_2^{p, 1}=\lambda_2^{p, 2}$.
\end{itemize}
\end{theorem}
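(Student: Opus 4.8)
The plan is to study the principal spectrum point $g(\lambda):=\mu_2^n(\mathcal L_2+\lambda m_2)$ of the regular problem \eqref{EP-mui} as a function of the single parameter $\lambda$ and to locate its zeros on $(0,\infty)$, in the same spirit as the Dirichlet case (Theorem \ref{PSP-diri}) but with one genuinely new subtlety at $\lambda=0$. The argument uses four facts about $g$, each of which is of the type established in Section~3. First, $g$ is \emph{continuous and convex} in $\lambda$; convexity is the structural workhorse, and the cleanest reason for it is that $\mu_2^n(\mathcal L_2+\lambda m_2)=\tfrac1T\log r\big(\Phi_\lambda(T)\big)$, where $\Phi_\lambda(T)$ is the one–period solution operator of $u_t=(K_2-B_2+\lambda m_2)u$, whose kernel is log–convex in $\lambda$ (a Feynman–Kac representation plus H\"older), so that $\log r$ is convex by Kingman's theorem. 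Second, $g(0)=0$: since $B_2$ is multiplication by $b_2(x)=\int_D\kappa(y-x)\,dy=K_2\mathbf 1$, the constant $\mathbf 1\in\mathcal D(\mathcal L_2)$ satisfies $\mathcal L_2\mathbf 1=K_2\mathbf 1-B_2\mathbf 1\equiv 0$, so $\mathbf 1$ is a positive eigenfunction of $\mathcal L_2$ for the eigenvalue $0$, and $0$ is its principal spectrum point.

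The third and fourth facts describe the two ends, and this is exactly where the two halves of \eqref{N} enter. For $\lambda\to\infty$: test with the $x$–independent function $\bar u(t,x)=e^{\phi(t)}$, where $\phi$ is the $T$–periodic primitive of $\lambda\max_{x\in\bar D_2}m_2(t,\cdot)-\tfrac\lambda T\mathcal P(m_2)$ (which exists precisely because this expression has zero mean over $[0,T]$). Since $K_2\bar u-B_2\bar u\equiv 0$ for $x$–independent $\bar u$, one gets $(\mathcal L_2+\lambda m_2)\bar u=\big(\tfrac\lambda T\mathcal P(m_2)-\lambda(\max_x m_2(t,\cdot)-m_2(t,x))\big)\bar u\le\tfrac\lambda T\mathcal P(m_2)\,\bar u$, so by the supersolution characterization of the principal spectrum point (Section~3) $g(\lambda)\le\tfrac\lambda T\mathcal P(m_2)$; because $m_2\not\equiv m_2(t)$ this supersolution is strict on a set of positive measure, upgrading the bound to $g(\lambda)<\tfrac\lambda T\mathcal P(m_2)$ for every $\lambda>0$. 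Conversely $\mathcal P(m_2)>0$ forces $g(\lambda)\to+\infty$; this "moving optimum" lower bound is the nonlocal counterpart of the estimate behind \eqref{R-N} and I would take it from Section~3. For $\lambda\downarrow 0$: the top eigenvalue $0$ of $K_2-B_2$ is simple and isolated (the essential spectrum of $K_2-B_2$ sits in $\{-b_2(x):x\in\bar D_2\}\subset(-\infty,0)$, and its quadratic form is $\le 0$ with kernel spanned by $\mathbf 1$), so the period map of $u_t=(K_2-B_2)u$ is a strict contraction on the complement of $\mathbf 1$, whence $\mathcal L_2 w=f$ is solvable whenever $\int_{D_2}\int_0^T f\,dt\,dx=0$. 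Take $w$ with $\mathcal L_2 w=-m_2+c_0$, where $c_0:=\tfrac1{T|D_2|}\int_{D_2}\int_0^T m_2\,dt\,dx$. Then $\mathbf 1+\lambda w>0$ for small $\lambda$ and $(\mathcal L_2+\lambda m_2)(\mathbf 1+\lambda w)=\lambda(c_0+\lambda m_2 w)$, so the sign of $g(\lambda)$ near $0^+$ is that of $c_0$; in the borderline case $c_0=0$ one works one order further, finding the second–order coefficient equal to $-\tfrac1{T|D_2|}\int_0^T\int_{D_2}(K_2-B_2)w\cdot w\,dx\,dt>0$ (strictly, because $m_2\not\equiv m_2(t)$). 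In short: $g(\lambda)<0$ on a right neighbourhood of $0$ if and only if $\int_{D_2}\int_0^T m_2<0$, and otherwise $g(\lambda)>0$ there.

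Granting these four facts, the theorem follows quickly. If \eqref{N} holds, then $g(0)=0$, $g<0$ just to the right of $0$, and $g(\lambda)\to+\infty$, so $g$ has a zero $\lambda_2^p>0$ by continuity — the "if" part of (1). Conversely, if $g(\lambda_2^p)=0$ for some $\lambda_2^p>0$, then $0=g(\lambda_2^p)\le\tfrac{\lambda_2^p}T\mathcal P(m_2)$, and since that bound is strict when $m_2\not\equiv m_2(t)$ we cannot have $\mathcal P(m_2)=0$, hence $\mathcal P(m_2)>0$; moreover if $\int_{D_2}\int_0^T m_2\ge 0$ then $g>0$ just to the right of $0$, so convexity together with $g(0)=0$ forces $g>0$ on all of $(0,\infty)$, contradicting $g(\lambda_2^p)=0$; therefore $\int_{D_2}\int_0^T m_2<0$, i.e. \eqref{N} holds — the "only if" part of (1). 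For (2): if $g$ had two distinct positive zeros $\lambda_a<\lambda_b$, then by (1) \eqref{N} holds, so $g<0$ on a right neighbourhood of $0$; but convexity puts $g$ below the chord joining $(0,0)$ and $(\lambda_b,0)$, i.e. $g\le 0$ on $[0,\lambda_b]$, and $g$ touching that (identically zero) chord at the interior point $\lambda_a$ forces $g$ to be affine, hence $\equiv 0$, on $[0,\lambda_b]$ — contradicting $g<0$ near $0^+$; so the positive zero is unique.

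I expect two steps to carry the real weight. The first is the convexity (and continuity) of $\lambda\mapsto\mu_2^n(\lambda)$ without compactness or any variational principle — this has to be extracted from the positive–operator / Feynman–Kac description of the principal spectrum point developed in Section~3, since Krein–Rutman is unavailable. The second is the lower bound $\mathcal P(m_2)>0\Rightarrow\mu_2^n(\lambda)\to\infty$: unlike random dispersal, the compactly supported kernel prevents the mass from instantaneously relocating to the current maximizer, so the subsolution realizing the growth rate $\tfrac1T\mathcal P(m_2)$ must be built with care. The $\lambda\downarrow 0$ analysis also rests on the solvability of $\mathcal L_2 w=f$, which I obtain from the spectral gap of $K_2-B_2$ above its essential spectrum rather than from a na\"ive Fredholm argument.
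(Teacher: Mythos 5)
Your proposal is correct and shares the paper's overall skeleton — analyze $g(\lambda)=\mu_2^n(\mathcal L_2+\lambda m_2)$ using $g(0)=0$, convexity, and the two asymptotic regimes — but it diverges from the paper's proof at three technical junctures. For convexity the paper uses the explicit $\sqrt{\phi_1\phi_2}$ super-solution construction (Proposition~\ref{property-PSP-prop}(5), via Lemma~\ref{technical-lm} to ensure the principal eigenvalues exist after a small perturbation of $m_2$), whereas you invoke a Feynman--Kac/Kingman log-convexity argument; the latter is a legitimate alternative but requires handling the non-smoothing (atomic) part of the nonlocal period map, which is exactly what the paper's route sidesteps. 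For the $\lambda\downarrow 0$ analysis you go through a Fredholm-type resolvent to solve $\mathcal L_2 w=-m_2+c_0$ and then test with $\mathbf 1+\lambda w$; the paper instead divides the principal eigenvalue equation (valid for $0<\lambda\ll 1$ by (S2)) by $\phi_2^{\lambda_2}$, integrates over $[0,T]\times D_2$, and symmetrizes $\kappa$ to obtain the identity $\mu_2^n(\lambda_2)\,T\,|D_2|=\lambda_2 T\int_{D_2}\widehat m_2+\tfrac12\int_0^T\!\!\int\!\!\int\kappa(y-x)\frac{(\phi(t,y)-\phi(t,x))^2}{\phi(t,x)\phi(t,y)}$, which is essentially your solvability condition in disguise but avoids invoking the Fredholm alternative and avoids the separate second-order expansion you need for the borderline case $c_0=0$. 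Finally, for uniqueness the paper proves (2) first — arguing that if $g\equiv 0$ on an interval then those $\mu_2^n$ cannot all be principal eigenvalues (Corollary~\ref{strict-convex}), hence $\mu_2^n(\lambda)=\widehat h_{2,\max}(\lambda)$ there, and then analyzing this max-of-affine-functions formula — and uses (2) inside (1) to obtain $g<0$ near $0^+$. Your proof reverses the logical order: (1) is proved first via the perturbation argument, and (2) follows by a one-line convexity observation (three collinear zeros of a convex function force it to vanish on the whole interval, contradicting $g<0$ near $0^+$). This reordering genuinely simplifies the uniqueness step, at the cost of needing to carry out the small-$\lambda$ perturbation unconditionally in (1). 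Both organizations are valid; the paper's avoids the explicit Fredholm machinery, yours avoids the $\widehat h_{2,\max}$ case analysis.
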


\begin{corollary}
\label{PSP-neum-coro}
\begin{itemize}
\item[(1)]  If $m_2(t,x)\equiv m_2(t)$, then
$$\mu_2^n(\lambda_2)=\lambda_2\widehat m_2.
$$
It then follows that, if $\int_0^T m_2(t)dt\not =0$, then  there is no positive principal spectrum point of \eqref{neumann-ori},
and if $\int_0^T m_2(t)dt=0$, then any $\lambda_2>0$ is a positive principal spectrum point  of \eqref{neumann-ori}.

\item[(2)]
 If $m_2(t, x)\equiv m_2(x)$, consider
\begin{equation}
\label{autonomous-neumann-ori}
\int_D\kappa(y-x)[u(y)-u(x)]dy+\lambda_2m_2(x)u(x)=0,\quad x\in\bar D.
\end{equation}
There is exactly one positive principal spectrum point $\lambda_2^p$ of \eqref{autonomous-neumann-ori}
 if and only if
$$
m_2(x_0)>0\quad \text{for some}\,\, x_0\in D_2\quad {\rm and}\,\,\int_{D_2} m_2(x)dx<0.
$$
\end{itemize}
\end{corollary}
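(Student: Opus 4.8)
The plan for part~(1) is to exhibit an explicit positive periodic eigenfunction and then appeal to the elementary characterization of the principal spectrum point. Since $m_2(t,x)\equiv m_2(t)$ is independent of $x$, I look for a solution of \eqref{EP-mui} with $i=2$ of the form $u(t,x)=\phi(t)$. Because $(K_2\phi)(t,x)-(B_2\phi)(t,x)=\int_D\kappa(y-x)\big[\phi(t)-\phi(t)\big]\,dy=0$, equation \eqref{EP-mui} collapses to the scalar linear ODE $\phi'(t)=\big(\lambda_2 m_2(t)-\mu_2\big)\phi(t)$, whose solution $\phi(t)=\exp\!\big(\int_0^t(\lambda_2 m_2(s)-\mu_2)\,ds\big)$ lies in $\mathcal D(\mathcal L_2)$, is strictly positive, and is $T$-periodic exactly when $\int_0^T(\lambda_2 m_2(s)-\mu_2)\,ds=0$, i.e.\ $\mu_2=\lambda_2\widehat m_2$. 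Thus $\lambda_2\widehat m_2$ is an eigenvalue of $\mathcal L_2+\lambda_2 m_2$ possessing a positive eigenfunction in $\mathcal X_2$, and the basic property of the principal spectrum point (a positive $T$-periodic eigenfunction forces the associated eigenvalue to equal $\mu_2^n$) yields $\mu_2^n(\lambda_2)=\lambda_2\widehat m_2$. The two subcases then follow at once: if $\int_0^T m_2(t)\,dt\neq0$ then $\widehat m_2\neq0$, so $\mu_2^n(\lambda_2)=\lambda_2\widehat m_2\neq0$ for all $\lambda_2>0$ and \eqref{neumann-ori} has no positive principal spectrum point; if $\int_0^T m_2(t)\,dt=0$ then $\mu_2^n(\lambda_2)\equiv0$, so every $\lambda_2>0$ is a positive principal spectrum point.

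For part~(2) the plan is to reduce everything to Theorem~\ref{PSP-neum}. First, by Proposition~\ref{spectrum-autonomous}, when $m_2(t,x)\equiv m_2(x)$ the principal spectrum point of the time-periodic problem \eqref{EP-mui} equals that of the autonomous problem \eqref{autonomous-pev}; hence a real number $\lambda_2^p$ is a positive principal spectrum point of \eqref{autonomous-neumann-ori} if and only if it is a positive principal spectrum point of \eqref{neumann-ori} with the ($x$-only) weight $m_2$. Next, in this autonomous situation one has $\mathcal P(m_2)=\int_0^T\max_{x\in\bar D_2}m_2(x)\,dt=T\max_{x\in\bar D_2}m_2(x)$ and $\int_{D_2}\int_0^T m_2(t,x)\,dt\,dx=T\int_{D_2}m_2(x)\,dx$, so condition \eqref{N} is equivalent to ``$m_2(x_0)>0$ for some $x_0\in D_2$ and $\int_{D_2}m_2(x)\,dx<0$'' (passing between the open domain $D_2$ and its closure $\bar D_2$ uses only continuity of $m_2$ and density of $D_2$ in $\bar D_2$). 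Provided $m_2(\cdot)$ is not constant --- equivalently $m_2(t,x)\not\equiv m_2(t)$, which is exactly the hypothesis of Theorem~\ref{PSP-neum} --- the asserted equivalence and uniqueness are immediate consequences of that theorem.

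It remains to treat the degenerate case in which $m_2(x)\equiv c$ is a constant, which is excluded from Theorem~\ref{PSP-neum}. Applying part~(1) with $m_2(t)\equiv c$ gives $\mu_2^n(\lambda_2)=\lambda_2 c$: if $c\neq0$ there is no positive principal spectrum point, and if $c=0$ there are infinitely many, so in no subcase is there exactly one. Since the right-hand condition here reads ``$c>0$ and $c\,|D_2|<0$'', which is impossible, the claimed equivalence holds vacuously, and the corollary follows. I expect the only genuinely delicate point to be the appeal in part~(1) to the fact that a positive periodic eigenfunction identifies $\mu_2^n$ (rather than merely producing some point of the spectrum), which must be drawn from the principal-spectrum-point theory developed in Section~3; the remainder is bookkeeping, with the constant-weight case handled separately precisely because it falls outside the hypotheses of Theorem~\ref{PSP-neum}.
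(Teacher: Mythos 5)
Your proof is correct. Part~(1) in the paper is proved by citing Proposition~\ref{spectrum-autonomous}(2) (which gives $\mu_2^n(\lambda_2)=\mu_2^n(0)+\lambda_2\widehat m_2$ when $m_2$ is $x$-independent) together with Proposition~\ref{property-PSP-prop}(2) (which gives $\mu_2^n(0)=0$); you instead give a self-contained derivation by exhibiting the explicit $x$-independent positive $T$-periodic eigenfunction $\phi(t)=\exp\big(\int_0^t(\lambda_2 m_2(s)-\lambda_2\widehat m_2)\,ds\big)$ and invoking the fact that a positive periodic eigenfunction pins down the principal spectrum point. That last step, which you correctly flag as the delicate one, is justified by Proposition~\ref{spectrum-limit}: with $\phi$ periodic and positive, $\Phi_2(t,0;\lambda_2 m_2)\phi(0,\cdot)=e^{\mu t}\phi(t,\cdot)$, whose logarithmic growth rate is $\mu$, so $\mu=\mu_2^n(\lambda_2)$. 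This route is slightly more elementary and essentially re-derives the two propositions in the special case at hand; what it buys is independence from Proposition~\ref{spectrum-autonomous}(2), while the paper's proof is shorter by leaning on existing general statements. For part~(2), the paper reduces to Theorem~\ref{PSP-neum} via Proposition~\ref{spectrum-autonomous}(1) exactly as you do, but glosses over the degenerate constant case $m_2(x)\equiv c$, which is excluded by the hypothesis $m_2(t,x)\not\equiv m_2(t)$ of Theorem~\ref{PSP-neum}. You correctly notice this gap and close it by applying part~(1): in the constant case both sides of the claimed equivalence are false, so it holds vacuously. This makes your proof of~(2) more complete than the paper's sketch, and the passage between $D_2$ and $\bar D_2$ via continuity is also correctly observed.
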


\begin{remark}
\label{PSP-neum-remark}
 Theorem \ref{PSP-neum} and Corollary \ref{PSP-neum-coro}
extend the principal eigenvalue theory for random dispersal operators with time independent or  time periodic indefinite weight
functions subject to Neumann boundary condition to nonlocal dispersal operators with time independent or time periodic indefinite weight functions subject to Neumann type  boundary condition.
\end{remark}

\begin{theorem}[Necessary  and sufficient condition in the periodic boundary case]
\label{PSP-peri}
 Suppose that $\kappa(\cdot)$ satisfies (K),  $m_3(\cdot, \cdot)\in\mathcal X_3$, and $m_3(t, x)\not \equiv m_3(t)$ for $t\in \R$ and $x\in \R^N$. The eigenvalue problem \eqref{periodic-ori} has exactly one positive principal spectrum point, denoted by $\lambda_3^p$, if and only if
 \be
\label{P}
\mathcal P (m_3)>0 \text{   and  } \int_{D_3} \int_0^Tm_3(t, x)dtdx<0.\tag{P}
\ee
More precisely, we show the following.
\begin{itemize}
\item[(1)]  There is a  $\lambda_3^p>0$ such that $\mu_3^n(\lambda_3^p)=0$ if and only if
(P) holds.

\item[(2)] If $\lambda_3^{p, 1},\lambda_3^{p, 2}>0$ are such that $\mu_3^n(\lambda_3^{p, 1})=\mu_3^n(\lambda_3^{p, 2})=0$, then
$\lambda_3^{p, 1}=\lambda_3^{p, 2}$.
\end{itemize}
\end{theorem}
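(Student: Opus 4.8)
The plan is to parallel the Neumann case (Theorem~\ref{PSP-neum}) as closely as possible, since the periodic boundary problem \eqref{periodic-ori} and the Neumann-type problem \eqref{neumann-ori} share the same operator structure: both have $B_3=B_1$ a genuine bounded multiplication operator (here $B_3 = I$ since $b_3\equiv 1$, but the kernel $K_3$ integrates against a convolution on the fundamental domain so that $K_3\mathbf 1 = \mathbf 1$), so that $\mathcal L_3$ annihilates constants, i.e. $\mathcal L_3 \mathbf 1 = 0$. This is the key structural fact that makes the ``$\mathcal P(m_3)>0$ and $\int\int m_3<0$'' condition the right one, exactly as in the Neumann case. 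First I would establish the relevant monotonicity and continuity properties of the map $\lambda_3\mapsto \mu_3^n(\lambda_3) := \mu_3^n(\mathcal L_3+\lambda_3 m_3)$, which I expect to be available from the general spectral theory recalled in subsection~3.3 (for instance: $\mu_3^n$ depends continuously on $\lambda_3$, is convex in $\lambda_3$, and $\mu_3^n(0)=0$ because $\mathcal L_3\mathbf 1=0$ and $\mathbf 1>0$ gives $0$ as the principal spectrum point of the unweighted operator). Convexity of $\lambda\mapsto\mu^n(\lambda)$ together with $\mu_3^n(0)=0$ immediately yields uniqueness (part (2)): a convex function vanishing at $0$ can vanish at at most one other point on each side of $0$, and one then rules out the possibility of two positive zeros by combining convexity with the derivative/asymptotic information below. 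So part (2) is essentially free once the convexity statement is in hand.

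For part (1), the necessity direction ($\lambda_3^p>0$ exists $\Rightarrow$ (P)) and the sufficiency direction ($\Rightarrow$ existence) should be handled by analyzing the behavior of $\mu_3^n(\lambda_3)$ as $\lambda_3\to 0^+$ and as $\lambda_3\to+\infty$. For the behavior near $0$: using $\mathbf 1$ as a test function and the variational-type lower bound $\mu_3^n(\lambda_3)\ge \frac{1}{T|D_3|}\int_{D_3}\int_0^T \lambda_3 m_3(t,x)\,dt\,dx$ (obtained by testing $\mathcal L_3+\lambda_3 m_3$ against constants and using $\langle \mathcal L_3 \mathbf 1,\mathbf 1\rangle = 0$), one sees that if $\int_{D_3}\int_0^T m_3<0$ fails — i.e. $\int\int m_3\ge 0$ — then $\mu_3^n(\lambda_3)\ge 0$ for all $\lambda_3>0$ with strict inequality when the integral is positive, so no positive zero can exist (or the unique zero degenerates). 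Conversely, when $\int\int m_3<0$, a sharper argument (differentiating $\mu_3^n$ at $\lambda_3=0$, using that the principal eigenfunction of the unweighted $\mathcal L_3$ is the constant $\mathbf 1$, normalized appropriately, so that $\frac{d}{d\lambda_3}\mu_3^n(0) = \frac{1}{T|D_3|}\int_{D_3}\int_0^T m_3<0$) shows $\mu_3^n(\lambda_3)<0$ for small $\lambda_3>0$. For the behavior as $\lambda_3\to+\infty$: the condition $\mathcal P(m_3)=\int_0^T\max_{x}m_3(t,x)\,dt>0$ should force $\mu_3^n(\lambda_3)\to+\infty$; here one localizes near a space-time point where $m_3$ is large and uses the nonlocal analogue of a concentration/test-function argument to get a lower bound of the form $\mu_3^n(\lambda_3)\ge \lambda_3\cdot(\text{positive constant depending on }\mathcal P(m_3)) - C$. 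Combining $\mu_3^n(\lambda_3)<0$ for small $\lambda_3$, $\mu_3^n\to+\infty$, and continuity gives a zero in $(0,\infty)$ by the intermediate value theorem; combining with convexity gives that it is unique and that (P) is also necessary.

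The main obstacle will be the $\lambda_3\to+\infty$ asymptotics under only the assumption $\mathcal P(m_3)>0$. In the random dispersal (Laplacian) case this is classical and uses that $\Delta$ together with strong localization drives the principal eigenvalue to $+\infty$; for the nonlocal operator $K_3 - I$, which is bounded, one cannot localize ``for free'' — the operator does not blow up test functions — so the argument is genuinely different and must exploit the time-dependence: one needs to build a periodic test function (or sub-solution) that is concentrated in $x$ near the set where $t\mapsto\max_x m_3(t,\cdot)$ lives and that rides along in time, producing growth $\sim \lambda_3 \mathcal P(m_3)/T$. I would construct this via an explicit ansatz $u(t,x) = e^{\lambda_3\Phi(t,x)/T}\psi(x)$ or via the time-periodic Perron/sub-solution characterization of $\mu_3^n$, showing $\mathcal L_3 u + \lambda_3 m_3 u \ge (\text{large})\, u$ on the region where $u$ is not negligible and controlling the error from the nonlocal term $K_3 u$ (which is where the boundedness of the operator actually helps, since $\|K_3 u\|\le \|u\|$). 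The periodicity and spatial-periodicity of $m_3$ make the fundamental-domain bookkeeping slightly fussier than in the Neumann case, but no new idea beyond Theorem~\ref{PSP-neum} is needed there; the only genuinely new work specific to Theorem~\ref{PSP-peri} over Theorem~\ref{PSP-neum} is checking that $K_3\mathbf 1 = \mathbf 1$ and that the spatial-periodic cone $\mathcal X_3^{++}$ behaves like $\mathcal X_2^{++}$ under the relevant positivity/irreducibility arguments, after which the Neumann proof transfers essentially verbatim.
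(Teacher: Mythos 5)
Your overall strategy — parallel the Neumann proof, exploit $K_3\mathbf 1=\mathbf 1$ so $\mathcal L_3\mathbf 1=0$ and $\mu_3^n(0)=0$, use the Rayleigh-type identity for small $\lambda_3$ and a sub-solution construction for large $\lambda_3$ — is the right one and matches the paper's proof in outline. However, there is a genuine gap in the way you dismiss part (2).

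You write that ``convexity of $\lambda\mapsto\mu_3^n(\lambda)$ together with $\mu_3^n(0)=0$ immediately yields uniqueness: a convex function vanishing at $0$ can vanish at at most one other point on each side of $0$.'' This is false: a convex function may vanish identically on an interval $[0,a]$ and then become positive, in which case every point of $(0,a]$ is a positive zero. Uniqueness is \emph{not} free from convexity; it is precisely this degenerate possibility that the paper's proof of part (2) (carried out in detail in the Neumann case and invoked here) has to rule out. The paper does so by a case split. If $\mu_3^n\equiv 0$ on $[\lambda_3^{p,1},\lambda_3^{p,2}]$, then either the interior values $\mu_3^n(\lambda_3)$ are genuine principal eigenvalues — in which case the \emph{strict} convexity statement (Corollary~\ref{strict-convex}, which requires $m_3(t,x)\not\equiv m_3(t)$ and the eigenvalue property) gives a contradiction — or they are not eigenvalues, in which case Proposition~\ref{pv-existence}(2) forces the explicit formula $\mu_3^n(\lambda)=\widehat h_{3,\max}(\lambda)=\max_{x\in\bar D_3}(-1+\lambda\widehat m_3(x))$, and the resulting linear-in-$\lambda$ structure (with a maximizer $x_\lambda$ satisfying $\widehat m_3(x_\lambda)>0$) yields a contradiction with constancy on an interval. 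You need this two-case argument; ``combining convexity with the derivative/asymptotic information'' does not supply it, because the pathology occurs in an interior interval, not at the endpoints.

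A second, smaller imprecision is in the necessity of $\int_{D_3}\int_0^T m_3<0$. Your proposed lower bound $\mu_3^n(\lambda_3)\ge\lambda_3\int\widehat m_3/|D_3|$ shows that $\int\widehat m_3>0$ is incompatible with a positive zero, but when $\int\widehat m_3=0$ it only gives $\mu_3^n\ge 0$, which does not preclude a zero; your phrase ``or the unique zero degenerates'' is not a proof. The paper closes this by first using part (2), convexity, and $\mu_3^n(0)=\mu_3^n(\lambda_3^p)=0$ to get \emph{strict} negativity $\mu_3^n(\lambda_3)<0$ for $0<\lambda_3<\lambda_3^p$, and then invoking, for $0<\lambda_3\ll 1$, the identity
$$
\mu_3^n(\lambda_3)\,T\,|D_3|-\lambda_3\,T\int_{D_3}\widehat m_3
=\frac12\int_0^T\!\!\int_{D_3}\!\!\int_{D_3}\widetilde\kappa(y-x)
\frac{(\phi_3^{\lambda_3}(t,y)-\phi_3^{\lambda_3}(t,x))^2}{\phi_3^{\lambda_3}(t,x)\phi_3^{\lambda_3}(t,y)}\,dy\,dx\,dt>0,
$$
where $\widetilde\kappa$ is the kernel periodized over the lattice; the right side is strictly positive because $m_3\not\equiv m_3(t)$ forces the eigenfunction to be nonconstant in $x$, whence $\int\widehat m_3<0$ follows. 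Also note that the necessity of $\mathcal P(m_3)>0$ requires its own short contrapositive argument (compare $m_3$ to $\widetilde m_3(t)=\max_x m_3(t,x)$ and use strict monotonicity of $\mu_3^n$ in the weight), which your outline does not separate out.
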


\begin{corollary}
\label{PSP-peri-coro}
\begin{itemize}
\item[(1)]  If $m_3(t, x)=m_3(t)$,
 then
$$\mu_3^n(\lambda_3)=\lambda_3 \widehat m_3.
$$
It then follows that, if $\int_0^T m_3(t)dt\not =0$, then  there is no positive principal spectrum point of \eqref{periodic-ori},
and if $\int_0^T m_3(t)dt=0$, then any $\lambda_3>0$ is a positive principal spectrum point of \eqref{periodic-ori}.

\item[(2)]
If $m_3(t, x)\equiv m_3(x)$, consider
\begin{equation}
\label{autonomous-periodic-ori}
\int_{\R^N}\kappa(y-x)[u(y)-u(x)]dy+\lambda_3m_3(x)u(x)=0.
\end{equation}
There is exactly one positive principal spectrum point $\lambda_3^p$ of \eqref{autonomous-periodic-ori}
 if and only if
$$
m_3(x_0)>0\quad \text{for some}\,\, x_0\in D_3,\quad {\rm and}\,\,\int_{D_3}  m_3(x)dx<0.
$$
 \end{itemize}
\end{corollary}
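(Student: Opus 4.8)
The plan is to deduce Corollary~\ref{PSP-peri-coro} from Theorem~\ref{PSP-peri}, from Proposition~\ref{spectrum-autonomous}, and from the basic fact (recalled in subsection~3.3) that the unweighted periodic nonlocal operator $\mathcal L_3$ has principal spectrum point $\mu_3^n(\mathcal L_3)=0$, with the constant function as a principal eigenfunction. Throughout, $\kappa$ satisfies (K).

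\emph{Part (1).} Assume $m_3(t,x)\equiv m_3(t)$ and set
\[
g(t)=\exp\!\Big(\lambda_3\!\int_0^t\!\big(m_3(s)-\widehat m_3\big)\,ds\Big).
\]
Since $\widehat m_3=\tfrac1T\int_0^Tm_3(s)\,ds$ is the time average of $m_3$, the exponent is $T$-periodic, so $g$ is positive, $C^1$ and $T$-periodic; hence $w\mapsto gw$ is a bounded invertible operator on $\mathcal X_3$ that maps $\mathcal D(\mathcal L_3)$ onto itself. Using $g'(t)=\lambda_3(m_3(t)-\widehat m_3)g(t)$ together with $K_3(gw)=g\,K_3w$ and $B_3(gw)=gw$, a short computation gives
\[
(\mathcal L_3+\lambda_3 m_3)(gw)=g\,(\mathcal L_3+\lambda_3\widehat m_3)w ,
\]
so $\mathcal L_3+\lambda_3 m_3$ is similar to $\mathcal L_3+\lambda_3\widehat m_3$ and therefore
\[
\sigma_3(\mathcal L_3+\lambda_3 m_3)=\sigma_3(\mathcal L_3+\lambda_3\widehat m_3)=\sigma_3(\mathcal L_3)+\lambda_3\widehat m_3 ,
\]
the last equality because $\widehat m_3$ is a constant. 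Taking suprema of real parts and using $\mu_3^n(\mathcal L_3)=0$ gives $\mu_3^n(\lambda_3)=\lambda_3\widehat m_3$. The two consequences follow at once: a positive principal spectrum point $\lambda_3^p$ must satisfy $\lambda_3^p\widehat m_3=0$, which is impossible when $\int_0^Tm_3(t)\,dt\neq0$, while if $\int_0^Tm_3(t)\,dt=0$ then $\mu_3^n(\lambda_3)\equiv0$ and every $\lambda_3>0$ is a positive principal spectrum point.

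\emph{Part (2).} Assume $m_3(t,x)\equiv m_3(x)$. By Proposition~\ref{spectrum-autonomous}, the principal spectrum point of \eqref{autonomous-periodic-ori} equals $\mu_3^n(\lambda_3)$ for \eqref{periodic-ori} with this time-independent weight, so the two problems have the same positive principal spectrum points. If $m_3$ is not constant, then $m_3(t,x)\not\equiv m_3(t)$ and Theorem~\ref{PSP-peri} applies: there is exactly one positive principal spectrum point if and only if (P) holds. For a time-independent weight, $\mathcal P(m_3)=T\max_{x\in D_3}m_3(x)$ (the maximum is attained since $D_3$ is compact and equals $\bar D_3$) and $\int_{D_3}\int_0^Tm_3(t,x)\,dt\,dx=T\int_{D_3}m_3(x)\,dx$, so (P) becomes precisely the requirement that $m_3(x_0)>0$ for some $x_0\in D_3$ and $\int_{D_3}m_3(x)\,dx<0$, which is the stated condition. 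Finally, if $m_3\equiv c$ is constant, this condition fails (it would force $c>0$ and $c|D_3|<0$), while Part~(1) with $m_3(t)=c$, combined again with Proposition~\ref{spectrum-autonomous}, gives $\mu_3^n(\lambda_3)=\lambda_3 c$, so \eqref{autonomous-periodic-ori} has no positive principal spectrum point if $c\neq0$ and infinitely many if $c=0$; in both cases it does not have exactly one positive principal spectrum point, so the equivalence holds in this degenerate case as well.

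I expect the only non-routine step to be the similarity transformation in Part~(1). The delicate point there is that the rescaling function $g$ must be $T$-periodic, which is why one subtracts the time average $\widehat m_3$ rather than using $\exp(\lambda_3\int_0^t m_3)$ directly; once $g$ is periodic, checking that $w\mapsto gw$ conjugates $\mathcal L_3+\lambda_3 m_3$ into $\mathcal L_3+\lambda_3\widehat m_3$ is a one-line calculation. Part~(2) is then bookkeeping: translating the abstract quantities $\mathcal P(m_3)$ and $\int_{D_3}\int_0^T m_3$ into their time-independent forms, and isolating the constant-weight case, where the standing hypothesis $m_3(t,x)\not\equiv m_3(t)$ of Theorem~\ref{PSP-peri} is unavailable.
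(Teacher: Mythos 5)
Your proof is correct and reaches the same conclusion as the paper, but by a slightly different route. The paper's own proof of Corollary~\ref{PSP-peri-coro} simply points to the proof of Corollary~\ref{PSP-neum-coro}, which in turn cites Proposition~\ref{spectrum-autonomous}(2) (giving $\mu_i^n(\mathcal L_i+\lambda_i m_i)=\mu_i^n(0)+\lambda_i\widehat m_i$ for $t$-only weights, proved there via $\Phi_i(t,0;m_i)=e^{\lambda_i\int_0^tm_i}\Phi_i(t,0;0)$ and the Lyapunov-exponent formula), Proposition~\ref{property-PSP-prop}(2) ($\mu_3^n(0)=0$), and then Theorem~\ref{PSP-peri} with Proposition~\ref{spectrum-autonomous}(1) for part (2). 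In Part~(1) you instead re-derive the content of Proposition~\ref{spectrum-autonomous}(2) from scratch via the periodic conjugation $w\mapsto gw$ with $g(t)=\exp(\lambda_3\int_0^t(m_3-\widehat m_3))$; this is essentially the generator-level version of the solution-operator identity the paper uses, and the observation that $g$ must be built from $m_3-\widehat m_3$ to stay $T$-periodic is exactly the right one. You still invoke $\mu_3^n(\mathcal L_3)=0$ (Proposition~\ref{property-PSP-prop}(2)), so you could have shortened this by citing Proposition~\ref{spectrum-autonomous}(2) directly, but the self-contained derivation is correct. In Part~(2) your argument is in fact more careful than the paper's: the paper reduces to Theorem~\ref{PSP-peri} without noting that its hypothesis $m_3(t,x)\not\equiv m_3(t)$ excludes constant $m_3$; you explicitly close this gap by checking that for $m_3\equiv c$ both sides of the claimed equivalence fail, so the statement remains true. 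This is a small but genuine improvement in rigor over the paper's terse treatment.
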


Next, we state the main result  on the upper bounds  of principal spectrum points of \eqref{dirichlet-ori},
\eqref{neumann-ori}, and \eqref{periodic-ori},   and sufficient conditions for the principal spectrum points of \eqref{dirichlet-ori},
\eqref{neumann-ori}, and 
\eqref{periodic-ori}
to be principal eigenvalues.

Consider the eigenvalue problem with indefinite weight function
$\hat m_i$,
\begin{equation}
\label{average-weight-pev}
K_iu-B_iu+\lambda_i \widehat m_i u=0,
\end{equation}
and the regular eigenvalue problem
\begin{equation}
\label{average-pev}
K_iu-B_iu+\lambda_i\widehat m_i u=\mu_i(\lambda_i) u
\end{equation}
in $X_i$.

\begin{theorem}[Properties of the  principal spectrum points]
\label{upper-bounds}
$\quad$
\begin{itemize}
\item[(1)] (Upper bounds) If \eqref{average-weight-pev} has a unique positive principal spectrum point $\lambda_i^p(\widehat m_i)$, then \eqref{EP-lambdai}
has also a unique positive principal spectrum point $\lambda_i^p(m_i)$ and
$$
\lambda_i^p(m_i)\le \lambda_i^p(\widehat m_i).
$$

\item[(2)] (Sufficient conditions of the existence of positive principal eigenvalues)
\item[(i)] Assume that $N=1$ or $2$,   $b_i+\widehat m_i$ is  $C^N$,
and  \eqref{EP-lambdai}
has a unique positive principal spectrum point $\lambda_i^p$, then $\lambda_i^p$ is a positive principal eigenvalue of
\eqref{EP-lambdai}.

\item[(ii)] For $N\geq 1$, and $i=1$ or $3$, assume that  $\ds\int_{D_i}\frac{1}{\widehat m_{i,\max} -\widehat m_i(x)}dx=\infty$ and  \eqref{EP-lambdai}
has a unique positive principal spectrum point $\lambda_i^p$, then $\lambda_i^p$ is a positive principal eigenvalue of
\eqref{EP-lambdai}.
\end{itemize}
\end{theorem}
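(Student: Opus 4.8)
The plan is to reduce both assertions to facts about the regular problem \eqref{EP-mui} that are recalled or established in Section~3, namely: (a) for fixed $m_i$ the map $\lambda_i\mapsto\mu_i^n(\lambda_i)$ is continuous and convex, with $\mu_i^n(0)=\mu_i^n(\mathcal L_i)\le 0$ (and $=0$ for $i=2,3$); (b) Proposition~\ref{spectrum-autonomous}, which identifies $\mu_i^n(\mathcal L_i+\lambda_i\widehat m_i)$ with the principal spectrum point of the autonomous operator $\mathcal A_i(\lambda_i):=K_i-B_i+\lambda_i\widehat m_i$ on $X_i$; (c) the time-averaging comparison $\mu_i^n(\mathcal L_i+\lambda_i m_i)\ge\mu_i^n(\mathcal L_i+\lambda_i\widehat m_i)$ for every $\lambda_i\ge 0$; (d) the fact that the essential spectrum of $\mathcal L_i+\lambda_i m_i$ on $\mathcal X_i$ has largest real part $s_i(\lambda_i):=\max_{x\in\bar D_i}\bigl(\lambda_i\widehat m_i(x)-b_i(x)\bigr)$ (equivalently, the period map of $\partial_t u=(K_i-B_i+\lambda_i m_i)u$ has essential spectral radius $e^{Ts_i(\lambda_i)}$, using that the nonlocal term is compact relative to the multiplication part); and (e) Coville's criterion, that $\mathcal A_i(\lambda_i)$ has a principal eigenvalue lying \emph{strictly} above $s_i(\lambda_i)$ whenever its potential $\lambda_i\widehat m_i-b_i$ satisfies the divergence condition $\int_{D_i}\bigl(\max_{D_i}(\lambda_i\widehat m_i-b_i)-(\lambda_i\widehat m_i(x)-b_i(x))\bigr)^{-1}dx=\infty$.

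For part~(1), I would first show existence and uniqueness of $\lambda_i^p(m_i)$. If \eqref{average-weight-pev} has a unique positive principal spectrum point, then by the autonomous parts of Corollaries~\ref{PSP-diri-coro}, \ref{PSP-neum-coro}, \ref{PSP-peri-coro} applied with the weight $\widehat m_i$ one must have $\widehat m_i(x_0)>0$ for some $x_0\in D_i$, and, when $i=2,3$, also $\int_{D_i}\widehat m_i<0$. Since $\mathcal P(m_i)=\int_0^T\max_x m_i(t,x)\,dt\ge T\widehat m_i(x_0)>0$ and $\int_{D_i}\int_0^T m_i=T\int_{D_i}\widehat m_i$, conditions (D), (N), (P) hold, so Theorems~\ref{PSP-diri}, \ref{PSP-neum}, \ref{PSP-peri} give a unique positive $\lambda_i^p(m_i)$. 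For the bound, convexity of $\lambda_i\mapsto\mu_i^n(\lambda_i;m_i)$ together with $\mu_i^n(0;m_i)\le 0$ and the uniqueness of the positive zero $\lambda_i^p(m_i)$ forces $\mu_i^n(\lambda_i;m_i)<0$ on $(0,\lambda_i^p(m_i))$; but (c) and (b) give $\mu_i^n(\lambda_i^p(\widehat m_i);m_i)\ge\mu_i^n(\lambda_i^p(\widehat m_i);\widehat m_i)=0$, hence $\lambda_i^p(\widehat m_i)\notin(0,\lambda_i^p(m_i))$, i.e.\ $\lambda_i^p(m_i)\le\lambda_i^p(\widehat m_i)$.

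For part~(2), since $\lambda_i^p$ is the principal spectrum point of \eqref{EP-lambdai} we have $\mu_i^n(\mathcal L_i+\lambda_i^p m_i)=0$, and by (d) it suffices to prove $s_i(\lambda_i^p)<0$: then $0$ is an isolated spectral value of $\mathcal L_i+\lambda_i^p m_i$ strictly above the essential spectrum (equivalently $1$ is an eigenvalue of the period map above its essential spectral radius), and eventual strong positivity of that period map (from $\kappa(0)>0$) together with Krein--Rutman produces $u^*\in\mathcal X_i^{++}$ solving \eqref{EP-mui} with $\mu_i=0$, so $\lambda_i^p$ is a principal eigenvalue. To get $s_i(\lambda_i^p)<0$: under hypothesis (ii), for $i=1,3$ one has $b_i\equiv1$, the potential of $\mathcal A_i(\lambda_i^p)$ is $\lambda_i^p\widehat m_i-1$, and its Coville condition reads $\tfrac1{\lambda_i^p}\int_{D_i}(\widehat m_{i,\max}-\widehat m_i(x))^{-1}dx=\infty$, which is exactly (ii); under hypothesis (i), the $C^N$-regularity of $b_i+\widehat m_i$ makes the potential of $\mathcal A_i(\lambda_i^p)$ a $C^N$ function on the $N$-dimensional set $D_i$, and for $N\le2$ a $C^N$ function vanishes at its maximizer $x_0$ no faster than $|x-x_0|^2$ (Taylor), so $\max(\text{potential})-\text{potential}(x)\lesssim|x-x_0|^2$ near $x_0$ and the Coville integral diverges. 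In either case (e) gives $\mu_i^{\mathrm{auto},n}(\mathcal A_i(\lambda_i^p))>s_i(\lambda_i^p)$, whence, combining with (c) and (b), $0=\mu_i^n(\mathcal L_i+\lambda_i^p m_i)\ge\mu_i^n(\mathcal L_i+\lambda_i^p\widehat m_i)=\mu_i^{\mathrm{auto},n}(\mathcal A_i(\lambda_i^p))>s_i(\lambda_i^p)$, as required.

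The step I expect to be the main obstacle is the time-averaging comparison~(c), on which both parts rest: conjugating by $u=\exp\!\bigl(\int_0^t\lambda_i(m_i-\widehat m_i)\,ds\bigr)v$ removes the temporal oscillation from the zeroth-order term but turns the convolution into a time-dependent, non-symmetric nonlocal kernel, so the usual sub/super-solution comparison breaks down (the weight $\exp(\int_0^t(m_i(s,y)-m_i(s,x))ds)$ need not exceed $1$); one must instead argue at the level of the period map, e.g.\ via a Jensen-type estimate for its spectral radius, and that is where the work of Section~3 lies. Granting (c), the only genuinely new ingredient internal to this proof is the elementary lemma (for case~(i)) that in dimension $N\le2$ a $C^N$ potential satisfies Coville's divergence condition, together with the standard bookkeeping that ``$0$ above the essential spectrum'' plus irreducibility yields a positive $T$-periodic eigenfunction.
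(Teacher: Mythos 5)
Your argument is correct and follows the paper's route: part~(1) invokes the autonomous corollaries (Corollaries~\ref{PSP-diri-coro}, \ref{PSP-neum-coro}, \ref{PSP-peri-coro}) to deduce that (D)/(N)/(P) hold, applies Theorems~\ref{PSP-diri}--\ref{PSP-peri} for the existence of $\lambda_i^p(m_i)$, and combines the averaging comparison with convexity for the bound; part~(2) reduces to verifying the Coville divergence condition (S3) for the averaged autonomous operator (directly under (ii), and via the Taylor argument that $C^N$-smoothness with $N\le 2$ at an interior maximizer forces (S1)$\Rightarrow$(S3) under (i)), which is exactly what Proposition~\ref{pv-existence} converts into the existence of a principal eigenvalue. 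The one caveat is that what you single out as ``the main obstacle,'' the time-averaging comparison $\mu_i^n(\mathcal{L}_i+\lambda_i m_i)\ge\mu_i^n(\mathcal{L}_i+\lambda_i\widehat m_i)$, is not something this proof has to establish: it is already recorded as Proposition~\ref{spectrum-Hi}(2) in Section~3 (imported from Rawal--Shen), so the paper's proof of Theorem~\ref{upper-bounds} is a short deduction from that preliminary material rather than a place where that inequality must be argued anew.
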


\begin{remark}
\label{average-pev-remark}
 Assume that $\widehat m_i(x_0)>0$ for some $x_0\in D_i$,  and in addition,
$\int_{D_i}\widehat m_i(x)dx<0$ in the case $i=2,3$. Then \eqref{average-weight-pev} has a unique positive principal spectrum point $\lambda_i^p(\widehat m_i)$.
\end{remark}


Finally, we consider the applications of the principal spectrum point theory of nonlocal dispersal operators with
time periodic indefinite weight function to the KPP type equations \eqref{n-kpp-diri}, \eqref{n-kpp-neum}, and \eqref{n-kpp-peri}. And  we  prove  the following result.

\begin{theorem}
\label{KPP}
Assume (K) and (F) hold.  Denote  the principal spectrum point of the eigenvalue problem \eqref{EP-mui} with $m_i(t, x)= f_i(t, x, 0)$ by $\mu_i^n(\lambda_i)$. Assume in addition that $f_1(t, x, 0)$, $ f_2(t, x, 0)$, and $ f_3(t, x, 0)$ satisfies (D), (N), (P), respectively.
  Then \eqref{n-kpp-diri} (\eqref{n-kpp-neum} or \eqref{n-kpp-peri}) admits a unique positive time-periodic solution $u^*(t, x)$ if  and only if
$$
\lambda_i>\lambda_i^p,
$$
where $\lambda_i^p$ is the positive number such that $\mu_i^n(\lambda_i^p)=0$.
\end{theorem}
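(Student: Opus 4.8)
The plan is to realize each of \eqref{n-kpp-diri}, \eqref{n-kpp-neum}, \eqref{n-kpp-peri} as a monotone, strictly subhomogeneous, point–dissipative semiflow and to reduce everything to the sign of the principal spectrum point $\mu_i^n(\lambda_i)$ of the linearization at $u\equiv0$, which is $\mathcal L_i+\lambda_i m_i$ with $m_i(t,x)=f_i(t,x,0)$. First I would record the analytic groundwork: for a nonnegative bounded continuous initial datum $u_0$ the relevant equation has a unique solution $u(t,x;u_0)$ that is classical in $t$ and globally defined forward in time; the comparison principle for nonlocal dispersal equations makes $u_0\mapsto u(t,\cdot;u_0)$ order preserving; condition (F), in particular $f_i(t,x,u)<0$ for $u\gg1$, furnishes a constant super-solution so that the semiflow is point dissipative; and $\partial_u f_i<0$ makes the Poincar\'e (time-$T$) map $Q_i:u_0\mapsto u(T,\cdot;u_0)$ strictly subhomogeneous on the nonnegative cone. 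Positive $T$-periodic solutions of \eqref{n-kpp-diri}--\eqref{n-kpp-peri} correspond exactly to nonzero nonnegative fixed points of $Q_i$, and strict subhomogeneity already forces $Q_i$ to have at most one such fixed point, so the uniqueness half of the statement is automatic once existence is settled.

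Next I would identify the threshold. The Fr\'echet derivative $DQ_i(0)$ is the time-$T$ solution operator of the linear nonlocal equation $\partial_t u=\mathcal L_iu+\lambda_i m_iu$, and by the characterization of the principal spectrum point recalled in Section~3 its spectral radius satisfies $r(DQ_i(0))=e^{\mu_i^n(\lambda_i)T}$, so $r(DQ_i(0))>1$ (resp. $=1$, $<1$) exactly when $\mu_i^n(\lambda_i)>0$ (resp. $=0$, $<0$). Since $f_i(\cdot,\cdot,0)$ satisfies (D) (resp. (N), (P)), Theorem~\ref{PSP-diri} (resp. \ref{PSP-neum}, \ref{PSP-peri}) supplies a unique $\lambda_i^p>0$ with $\mu_i^n(\lambda_i^p)=0$, and the sign analysis carried out in the proofs of those theorems (negativity of $\mu_i^n$ below $\lambda_i^p$ and positivity above, uniqueness of the positive zero ruling out any reversal) yields $\mu_i^n(\lambda_i)>0\iff\lambda_i>\lambda_i^p$. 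Thus it remains to prove: (a) if $r(DQ_i(0))>1$ then $Q_i$ has a positive fixed point; (b) if $r(DQ_i(0))\le1$ then $Q_i$ has none.

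For (b) the argument is linear-at-$0$ plus subhomogeneity: a positive fixed point $u^*$ would, by comparison of $Q_i$ on $[0,u^*]$ with its linearization together with $\partial_u f_i<0$, force $r(DQ_i(0))>1$, a contradiction; combined with point dissipativity this also shows $u\equiv0$ attracts every nonnegative solution when $\lambda_i\le\lambda_i^p$. For (a), instability of $0$ lets me manufacture an arbitrarily small positive strict sub-solution from a positive approximate eigenfunction of $\mathcal L_i+\lambda_im_i$ associated with $\mu_i^n(\lambda_i)$ — using the genuine principal eigenfunction when Theorem~\ref{upper-bounds}(2) guarantees one, and a perturbed/averaged surrogate (Theorem~\ref{upper-bounds}(1) and Remark~\ref{average-pev-remark}) otherwise; sandwiching between this sub-solution and the constant super-solution and iterating $Q_i$ monotonically produces an ordered pair of positive $T$-periodic solutions, which strict subhomogeneity collapses to the single $u^*$. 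Finally I would upgrade to global attractivity of $u^*$ among nonzero nonnegative data via a part-metric (Hilbert projective metric) contraction, valid once the flow is shown to be eventually strongly positive on the connected spatial domain.

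The main obstacle is the absence of compactness: $Q_i$ does not smooth, so neither the classical Krein--Rutman machinery nor the off-the-shelf monotone threshold theorems apply verbatim. I expect to handle this in one of two ways — either by showing $Q_i$ is $\alpha$-condensing (asymptotically smooth), exploiting that its linear part splits into the compact convolution $K_i$ on $X_i$ plus a multiplication operator whose contribution decays exponentially, and then invoking a noncompact version of the threshold theorem; or, more self-containedly, by deriving the two genuinely delicate conclusions — convergence to $u^*$ when $\lambda_i>\lambda_i^p$, and global attractivity of $0$ in the critical case $\lambda_i=\lambda_i^p$ where linear theory provides no decay rate — directly from the order structure, since the part metric contracts strictly under an eventually strongly positive strictly subhomogeneous map with no compactness assumed. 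Establishing this eventual strong positivity for the three nonlocal operators, and squeezing the neutral mode in the borderline case, are the steps I expect to be the most technical.
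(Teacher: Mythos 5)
Your plan is a reasonable route, but it is genuinely different from the paper's and considerably more laborious. The paper does not rebuild the monotone-dynamical-systems machinery at all: for the ``if'' direction it simply invokes \cite[Theorem E]{RaSh}, which already establishes existence and uniqueness of the positive time-periodic solution of the KPP equation under hypotheses (H1) $=$ (F) and (H2) $\mu_i^n(\lambda_i)>0$, so the entire content of that direction reduces to verifying that $\lambda_i>\lambda_i^p$ implies $\mu_i^n(\lambda_i)>0$ (which follows from $\mu_i^n(0)\le 0$, $\mu_i^n(\lambda_i^p)=0$ and convexity/uniqueness of the positive zero, i.e.\ Theorems \ref{PSP-diri}--\ref{PSP-peri} and Proposition \ref{property-PSP-prop}). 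For the ``only if'' direction the paper gives a two-line argument that is cleaner than your Poincar\'e-map version: a positive periodic solution $u^*$ makes $(u^*,\mu=0)$ an eigenpair of $\mathcal L_i+\lambda_i f_i(\cdot,\cdot,u^*)$, so $\mu_i^n(\mathcal L_i+\lambda_i m_i^*)=0$ with $m_i^*(t,x)=f_i(t,x,u^*(t,x))$; since $\partial_u f_i<0$ gives $m_i^*<m_i$ pointwise, the strict monotonicity of Proposition \ref{property-PSP-prop}(3) yields $\mu_i^n(\lambda_i)>0$ and hence $\lambda_i>\lambda_i^p$. What your approach buys is self-containedness and a clear picture of why the threshold theorem holds; what it costs is exactly the issues you flag --- the lack of compactness, eventual strong positivity of the nonlinear semiflow, and the borderline case $\lambda_i=\lambda_i^p$ --- all of which are already handled inside \cite{RaSh} and which the paper deliberately does not re-prove. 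If you intend to carry your plan through, be aware that those flagged steps are not minor: the subhomogeneity/part-metric argument without compactness is delicate and is effectively a reproof of \cite[Theorem E]{RaSh}, so you would do better to cite it as the paper does and spend your effort on the spectral threshold equivalence $\mu_i^n(\lambda_i)>0\iff\lambda_i>\lambda_i^p$, which is the part specific to the indefinite-weight setting of this paper.
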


\section{Preliminary}

In this section, we present some notations and preliminary materials to be used in the proofs of the main results in Section 4.
We first present a technical lemma in subsection 3.1. Then we present a comparison principle for solutions of some
linear nonlocal evolution equations in subsection 3.2. Finally we present some basic properties of principal spectrum points of
the regular eigenvalue problem
\eqref{EP-mui} in subsection 3.3. Throughout this section, 
for $1\leq i\leq 3$,  let
\be
\label{hi}
h_i(t, x; \lambda_i)=-b_i(x)+\lambda_i m_i(t, x),
\ee
and
\be
\label{hathi}
\widehat h_i(x;\lambda_i)=-b_i(x)+\lambda_i\widehat m_i(x),
\ee
where $\widehat m_i$ is as in \eqref{hat-mi}. For $D_i$ defined in  \eqref{D-i}, set
\be
\label{hathi-min-max}
\widehat h_{i, \max}(\lambda_i)=\max_{x\in \bar D_i}  \widehat h_i(x;\lambda_i), \quad \text{ and } \widehat h_{i, \min}(\lambda_i)=\min_{x\in \bar D_i}\widehat h_i(x;\lambda_i).
\ee
If there is no confusion, we  may omit $\lambda_i$, and abbreviate the notations in \eqref{hi}, \eqref{hathi}, and \eqref{hathi-min-max}  as $h_i(t, x)$, $\widehat h_i(x)$ and $\widehat h_{i, \max}$, respectively.

\subsection{A technical lemma}

In this subsection, we provide a useful technical lemma.

\begin{lemma}
\label{technical-lm}
Let $1\le i\le 3$. For any fixed $0<\lambda_i\in \R$, any $m_i\in \mathcal X_i$ and any $\epsilon>0$, there is $m_{i,\epsilon}\in\mathcal X_i$ satisfying that
$$
\|m_i-m_{i,\epsilon}\|_{\mathcal X_i}<\epsilon,
$$
 $b_i(x)+\lambda_i\widehat m_{i, \epsilon}(x)$  is $C^N$, $b_i(x)+\lambda_i\widehat m_{i, \epsilon}(x)$  attains its maximum at some $x_0\in {\rm Int}(D_i)$, and the derivatives of $b_i+\lambda_i\widehat m_{i, \epsilon}(x)$
 up to order $N-1$ at
$x_0$ are zero.
\end{lemma}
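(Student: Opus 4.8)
The plan is to build $m_{i,\epsilon}$ from $m_i$ by perturbing only the time-average, i.e.\ by choosing a function $g_\epsilon\in X_i$ with $\norm{g_\epsilon}_{X_i}<\epsilon$ and setting $m_{i,\epsilon}(t,x)=m_i(t,x)+g_\epsilon(x)$; then $\widehat m_{i,\epsilon}=\widehat m_i+g_\epsilon$ and $\norm{m_i-m_{i,\epsilon}}_{\mathcal X_i}=\norm{g_\epsilon}_{X_i}<\epsilon$, so the only real work is to arrange the three regularity/critical-point properties of $b_i+\lambda_i\widehat m_{i,\epsilon}=b_i+\lambda_i\widehat m_i+\lambda_i g_\epsilon$. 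Write $\phi:=b_i+\lambda_i\widehat m_i\in X_i$ (a fixed continuous function, since $b_i$ is continuous); the goal is to find $g_\epsilon$ with $\norm{g_\epsilon}_{X_i}<\epsilon$ such that $\psi:=\phi+\lambda_i g_\epsilon$ is $C^N$, attains its max at an interior point $x_0$, and has all derivatives through order $N-1$ vanishing at $x_0$.

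First I would mollify: fix $x^*\in D_i$ with $\phi(x^*)$ within $\epsilon/(4\lambda_i)$ of $\max_{\bar D_i}\phi$, and (in case $i=1,2$, where $D_i=D$ is open and bounded, or $i=3$ where $D_3$ is a torus so there is no boundary issue at all) convolve $\phi$ with a standard mollifier of small support to get a $C^\infty$ function $\phi_\delta$ with $\norm{\phi_\delta-\phi}_{X_i}<\epsilon/(4\lambda_i)$ on $\bar D_i$; this handles the $C^N$ requirement. Next, let $y_0\in{\rm Int}(D_i)$ be a point where $\phi_\delta$ is within $\epsilon/(4\lambda_i)$ of its supremum over $\bar D_i$ (possible since $\phi_\delta$ is continuous and close to $\phi$, whose near-maximizer $x^*$ lies in the interior). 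Now add a small smooth bump concentrated near $y_0$: choose a nonnegative $C^\infty$ cutoff $\chi$ supported in a tiny ball $B_r(y_0)\subset{\rm Int}(D_i)$ with $\chi(y_0)=1$, $\chi\le 1$, and define $\psi=\phi_\delta+\eta\cdot\chi\cdot\big((\phi_\delta(y_0)-\phi_\delta(x))+|x-y_0|^{2N}\big)$ for a small $\eta>0$; more cleanly, replace $\phi_\delta$ on $B_r(y_0)$ by $\phi_\delta(y_0)+ c|x-y_0|^{2N}$ for a suitable $c<0$ and glue smoothly. The point is to force, near $y_0$, $\psi(x)=\psi(y_0)-|x-y_0|^{2N}$ (times a positive constant), which is $C^N$ (indeed $C^\infty$), has a strict local max at $y_0$ with all derivatives through order $N-1$ equal to zero there, and — if $\eta$ (equivalently the size of the modification) is small enough and $y_0$ was chosen as a near-maximizer — makes $y_0$ the global maximizer of $\psi$ over $\bar D_i$. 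Then set $g_\epsilon=(\psi-\phi)/\lambda_i$ and $x_0=y_0$; by construction $\norm{g_\epsilon}_{X_i}<\epsilon$.

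The two points that need care: (a) keeping the global maximum at the interior point $y_0$ rather than letting it escape to $\partial D_i$ — this is why one first mollifies and locates a near-maximizer strictly inside ${\rm Int}(D_i)$, so that a sufficiently small perturbation supported near $y_0$ that strictly raises the value there (or at least keeps $\psi(y_0)=\max\phi_\delta$) dominates the boundary values; (b) matching the local model $\psi(y_0)-c|x-y_0|^{2N}$ smoothly to $\phi_\delta$ outside the small ball while controlling the sup-norm of the change — a routine partition-of-unity/gluing argument, with the radius $r$ and the coefficient $c$ taken small. I expect step (a), ensuring the modified maximum does not migrate to the boundary, to be the main obstacle, though it is dispatched cleanly by the "mollify first, then bump at an interior near-maximizer" ordering. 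For $i=3$ there is no boundary and the argument is simpler; for $i=1,2$ one uses that $D$ is open so ${\rm Int}(D)=D$ and near-maximizers of a continuous function can be taken in $D$.
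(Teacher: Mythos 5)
Your route is genuinely different from the paper's, and it is essentially sound, but as written it has one real gap in the ``bump and glue'' step.

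The paper's proof (following \cite[Lemma~4.1]{RaSh}) first adds a small \emph{continuous} bump to $\widehat h_i=-b_i+\widehat m_i$ near an interior near-maximizer, so that the perturbed continuous function $\overline h_{i,\epsilon}$ attains its maximum at an interior $x_0$; then it replaces $\overline h_{i,\epsilon}$ by a \emph{continuous} function $\widetilde h_{i,\epsilon}$ that is identically equal to $\overline h_{i,\epsilon}(x_0)$ on a ball $B(x_0,\sigma)$ and satisfies $\overline h_{i,\epsilon}\le\widetilde h_{i,\epsilon}\le\overline h_{i,\epsilon}(x_0)$ everywhere; and \emph{only then} mollifies. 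The point of that ordering is that the flattening is a purely continuous gluing, and mollification with a kernel of radius $<\sigma/2$ preserves both the plateau (the convolution is still the constant on $B(x_0,\sigma/2)$) and the upper bound by that constant (convolution with a probability density cannot increase the supremum). So the mollified function is automatically $C^\infty$, its global max is the interior plateau, and all derivatives vanish there; no smooth gluing is ever performed. You reverse the order: mollify first to get $\phi_\delta\in C^\infty$, then surgically modify $\phi_\delta$ near an interior near-maximizer $y_0$. That forces all the gluing to happen in the $C^N$ category, and, more importantly, it forces the ``move the maximum into the interior'' step to be done by the surgery itself.

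That is where the gap is. You choose $y_0$ only as a \emph{near}-maximizer of $\phi_\delta$, and your cleaned-up replacement is $\psi=\phi_\delta(y_0)+c\,|x-y_0|^{2N}$ with $c<0$ on a small ball, glued to $\phi_\delta$ outside. This gives $\psi(y_0)=\phi_\delta(y_0)$, which may be strictly smaller than $\sup_{\bar D_i}\phi_\delta$ (attained, say, at a boundary point). In that case $y_0$ is a local but not a global maximizer of $\psi$, and the conclusion fails. Your first formula $\psi=\phi_\delta+\eta\,\chi\,\big((\phi_\delta(y_0)-\phi_\delta(x))+|x-y_0|^{2N}\big)$ is worse: near $y_0$ one gets $\nabla\psi(y_0)=(1-\eta)\nabla\phi_\delta(y_0)\neq 0$ in general, and for $\eta=1$ the point $y_0$ becomes a local minimum. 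You do name the right desideratum (``a perturbation that strictly raises the value there''), but the formula you settle on does not strictly raise it. The fix is small and within the spirit of your argument: replace the local model by $\phi_\delta(y_0)+\eta-|c|\,|x-y_0|^{2N}$ with a constant $\eta>\sup_{\bar D_i}\phi_\delta-\phi_\delta(y_0)$ (which can be taken $<\epsilon/(4\lambda_i)$ by the choice of $y_0$), glue via $\psi=(1-\chi)\phi_\delta+\chi\big(\phi_\delta(y_0)+\eta-|c|\,|x-y_0|^{2N}\big)$ with a smooth cutoff $\chi$; since both branches are $\le\phi_\delta(y_0)+\eta=\psi(y_0)$ on the transition annulus and $\phi_\delta\le\sup\phi_\delta<\psi(y_0)$ outside, the global maximum is forced to $y_0$, and adding the constant $\eta$ does not disturb the vanishing of the derivatives through order $2N-1\ge N-1$. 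With that correction and a remark that $\phi$ is first Tietze-extended to a neighbourhood $\widetilde D_i\Supset D_i$ so that mollification near $\partial D_i$ makes sense (the paper makes this explicit), your proof goes through and is a legitimate, slightly more hands-on, alternative to the ``bump--flatten--mollify'' order the paper uses.
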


\begin{proof} It follows from \cite[Lemma 4.1]{RaSh}. For the self-completeness, we provide a proof in the following.

 We prove the case  $i=1$ or $2$. The case $i=3$ can be proved similarly. Without loss of generality, we assume $\lambda_i=1$, and recall that $\widehat h_i(x)=-b_i(x)+\widehat m_i(x)$ (see \eqref{hathi}).

 First, let $\tilde x_0\in \bar D_i$ be such that
$$
\widehat h_i(\tilde x_0)=\max_{x\in\bar D_i}  \widehat h_i(x).
$$
For any $\epsilon>0$, there is $\tilde x_\epsilon \in {\rm Int}(D_i)$  such that
\begin{equation}
\label{eq1}
\widehat h_i(\tilde x_0)-\widehat h_i(\tilde x_\epsilon)<\frac{\epsilon}{3}.
\end{equation}
Let $\tilde\sigma>0$ be such that
$$
B(\tilde x_\epsilon,\tilde\sigma)\Subset D_i,
$$
where $B(\tilde x_\epsilon,\tilde\sigma)$ denotes the open ball with center $\tilde x_\epsilon$ and radius
$\tilde\sigma$.

Note that there is $\xi_i(\cdot)\in C(\bar D_i)$  such that $0\leq \xi_i(x)\leq 1$, $ \xi_i(\tilde x_\epsilon)=1$, and
${\rm supp}( \xi_i)\subset B(\tilde x_\epsilon,\tilde\sigma)$. Let
\[
\overline m_{i, \epsilon}(t, x)=m_i(t, x)+\frac{\epsilon}{3} \xi_i(x),
\]
and
\begin{equation}
\label{eq0}
 \overline h_{i,\epsilon}(x)=-b_i(x)+\widehat m_{i}(x)+\frac{\epsilon}{3} \xi_i(x).
\end{equation}
Then  $\overline m_{i, \epsilon}(t, \cdot)$ and $\overline h_{i,\epsilon}(\cdot)$ is continuous on $\bar D_i$,
\be
\label{overline-ai}
\|\overline m_{i, \epsilon}(t, \cdot)-m_i(t, \cdot)\|\leq \frac{\epsilon}{3}
\ee
 and  $\overline h_{i,\epsilon}(\cdot)$ attains its maximum in ${\rm Int}(D_i)$.

Let $\widetilde D_i\subset \RR^N$ be such that $D_i\Subset \widetilde D_i$. Note that $\overline h_{i,\epsilon}(\cdot)$ can be continuously extended to $\widetilde D_i$.
Without loss of generality, we may then assume that $\overline h_{i,\epsilon}(\cdot)$ is a continuous function on $\tilde D_i$ and there is
$x_0\in {\rm Int}(D_i)$  such that
$\overline h_{i,\epsilon}(x_0)=\sup_{x\in\widetilde D_i}\overline h_{i,\epsilon}(x)$.
Observe that there is $\sigma>0$ and $\widetilde h_{i,\epsilon}(\cdot)\in C(\widetilde D_i)$  such that
$B(x_0,\sigma)\Subset D_i$,
\begin{equation}
\label{eq3}
0\leq\widetilde h_{i,\epsilon}(x)-\overline h_{i,\epsilon}(x)\le\frac{ \epsilon}{3}\quad \forall \,  x\in\widetilde D_i,
\end{equation}
and
$$
\widetilde h_{i,\epsilon}(x)=\widehat h_{i,\epsilon}(x_0)\quad \forall \, x\in B(x_0,\sigma).
$$

Let
$$
\eta(x)=\begin{cases} C\exp(\frac{1}{\|x\|^2-1})\quad &{\rm if}\,\ \|x\|<1,\cr\cr
0\quad &{\rm if}\,\ \|x\|\geq 1,
\end{cases}
$$
where $C>0$ is such that $\int_{\RR^N}\eta(x)dx=1$.
For given $\delta>0$, set
$$
\eta_\delta(x)=\frac{1}{\delta^N}\eta\left(\frac{x}{\delta}\right).
$$
Let
$$
h_{i,\epsilon,\delta}(x)=\int_{\widetilde D_i}\eta_\delta(y-x)\widetilde h_{i,\epsilon}(y)dy.
$$
By \cite[Theorem 6, Appendix C]{Ev}, $ h_{i,\epsilon,\delta}(\cdot)$ is in $C^\infty(\widetilde D_i)$ and when $0<\delta\ll 1$,
\begin{equation}
\label{eq4}
|h_{i,\epsilon,\delta}(x)-\widetilde h_{i,\epsilon}(x)|<\frac{\epsilon}{3}\quad \forall \, x\in\bar D_i.
\end{equation}
It is not difficulty to see that for $0<\delta\ll 1$,
$$
h_{i,\epsilon,\delta}(x)\leq
\widetilde h_{i,\epsilon}(x_0)\quad \forall x\in B(x_0,\sigma),
$$
and
$$
h_{i,\epsilon,\delta}(x)=\widetilde h_{i,\epsilon}(x_0)\quad
\forall x\in B(x_0,\sigma/2).
$$
Fix $0<\delta\ll 1$. Let
$$\widehat h_{i, \epsilon}(x)=h_{i,\epsilon,\delta}(x).
$$
Then
$\widehat h_{i, \epsilon}(\cdot)$ attains its maximum at some $x_0\in{\rm Int}(D_i)$,
and the partial derivatives of $\widehat h_{i,\epsilon}(\cdot)$ up to order $N-1$ at $x_0$ are zero.
Let
$$
 m_{i, \epsilon}(t, x)= \overline m_{i, \epsilon}(t, x)   +\widehat h_{i, \epsilon}(x)-\overline h_{i,\epsilon}(x).
$$
Then $m_{i, \epsilon}\in{\mathcal X}_2$,
$$
\|m_i-m_{2,\epsilon}\|\leq \|m_i-\overline m_{i, \epsilon}\|+\|\widehat h_{i, \epsilon}-\overline h_{i, \epsilon}\|\leq\|m_i-\overline m_{i, \epsilon}\|+\|\widehat h_{i, \epsilon}-\widetilde h_{i, \epsilon}\|+\|\widetilde h_{i, \epsilon}-\overline h_{i, \epsilon}\|\leq \epsilon,
$$
and
\[
-b_i(x)+\widehat m_{i,\epsilon}(x)=\widehat h_{i, \epsilon}(x).
\]
Therefore, $-b_i+\widehat m_{i, \epsilon}$ is $C^N$, attains its maximum at some point $x_0\in {\rm Int}D_i$, and the partial derivatives of $-b_i+\widehat m_{i, \epsilon}$ up to order $N-1$ at $x_0$ are zero. The lemma is thus proved.
\end{proof}

\subsection{Comparison Principle} In this subsection, we  present the comparison principle for
 the solutions to the following evolution equations associated to the eigenvalue problem \eqref{EP-mui} with $\lambda_i\ge 0$,
\begin{equation}
\label{dirichlet-ori-evol}
 \p_t u(t, x)=\int_D \kappa(y-x)u(t, y)dy-u(t, x)+ \lambda_1 m_1(t,x)u(t, x) \quad \text{ in } \bar D,
\end{equation}
\begin{equation}
\label{neumann-ori-evol}
 \p_t u(t, x)=\int_D \kappa(y-x)[u(t, y)-u(t, x)]dy+ \lambda_2 m_2(t,x)u(t, x) \quad \text{ in } \bar D,
\end{equation}
and
 \begin{equation}
\label{periodic-ori-evol}
 \p_t u(t, x)=\int_{\R^N} \kappa(y-x)[u(t, y)-u(t, x)]dy+\lambda_3 m_3(t, x)u(t, x) \quad \text{ in } \R^N,
\end{equation}
where $m_i(t, x)\in \mathcal X_i(i=1, 2, 3)$.

By general semigroup theory,  \eqref{dirichlet-ori-evol} (resp. \eqref{neumann-ori-evol}, \eqref{periodic-ori-evol})  generates evolution families $\{\Phi_1(t, s; m_1)\}$ (resp. $\{\Phi_2(t, s; m_2), \Phi_3(t, s; m_3)\}$) on $X_1$ (resp. $X_2$, $X_3$), that is, for any $u_0\in X_1$ (resp. $u_0\in X_2, u_0\in X_3$), $u(t, x; s, u_0, m_1):=(\Phi_1(t, s; m_1)u_0)(x)$ (resp. $u(t, x; s, u_0, m_2):=(\Phi_2(t, s; m_2)u_0)(x)$, $u(t, x; s, u_0, m_3):=(\Phi_3(t, s; m_3) u_0)(x)$) is the unique  solution of \eqref{dirichlet-ori-evol} (resp. \eqref{neumann-ori-evol}, \eqref{periodic-ori-evol}) with $u(s, x; s, u_0, m_1)=u_0(x)$ (resp. $u(s, x; s, u_0, m_2)=u_0(x)$, $u(s, x; s, u_0, m_3)=u_0(x)$).

\begin{definition}
\label{super-sub-sol}
A bounded  measurable   function $u(t, x)$ on $[0, T)\times \bar D$ is called a {\it super-solution} (or {\it sub-solution}) of \eqref{dirichlet-ori-evol} if for any $x\in \bar D$, $u(t, x)$ is differentiable for all  but finite many $t$'s  in  $[0, T)$ and satisfies that for each $x\in \bar D$,
\[
\p_t u\geq (\text{ or } \leq) \int_D \kappa(y-x)u(t, y)dy-u(t, x)+ m_1(t,x)u(t, x)
\]
for all but finite many $t$'s in  $[0, T)$.
\end{definition}
{\it Super-solutions and sub-solutions} of \eqref{neumann-ori-evol} and $\eqref{periodic-ori-evol}$ are defined in an analogous way.
\begin{proposition}[Comparison principle]
\label{comparison}
$\quad$
\begin{itemize}
\item[(1)] If $u^1(t, x)$ and $u^2(t, x)$ are bounded sub- and super- solution of \eqref{dirichlet-ori-evol} (resp. \eqref{neumann-ori-evol}, \eqref{periodic-ori-evol}) on $[s, T)$, respectively, and $u^1(s, \cdot)\leq u^2(s, \cdot)$, then $u^1(t, \cdot)\leq u^2(t, \cdot)$ for $t\in [s, T)$.
\item[(2)] If $u^1, u^2\in X_i$, $u^1\leq u^2$ and $u^1\neq u^2$, then
 \[\Phi_i(t, s; m_i) u^1\ll \Phi_i(t, s; m_i) u^2 \text{ for all } t>s.
 \]
\item[(3)] If $u_0\in X_i^+$, and $m_i^1, m_i^2\in \mathcal X_i$, if $m_i^1\leq m_i^2$, then
\[
\Phi_i(t, s; m_i^1)u_0\leq \Phi_i(t, s; m_i^2)u_0 \text{ for all } t>s.
\]
\end{itemize}
\end{proposition}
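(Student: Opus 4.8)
The plan is to handle the three evolution equations \eqref{dirichlet-ori-evol}--\eqref{periodic-ori-evol} at once, writing them uniformly as $\p_t u=K_iu-B_iu+\lambda_i m_iu$ on the connected domain $\Omega_i$ ($\Omega_i=\bar D$ for $i=1,2$, $\Omega_i=\R^N$ for $i=3$), and to base everything on the $x$-parametrized integrating factor
\[
\rho_i(t,x;s)=\exp\Big(\int_s^t\big(b_i(x)-\lambda_i m_i(\tau,x)\big)\,d\tau\Big)>0 ,
\]
which, since $b_i$ and $m_i$ are continuous and $T$-periodic, is continuous and is bounded above and below by positive constants on each compact $t$-interval. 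For part (1) I would first note that, by linearity, $w:=u^2-u^1$ is a super-solution of the corresponding homogeneous equation with $w(s,\cdot)\ge 0$, so it suffices to show that any such $w$ stays nonnegative. Multiplying the differential inequality for $w$, at each fixed $x$, by $\rho_i(\cdot,x;s)$ turns it into $\p_\tau(\rho_i w)\ge\rho_i K_iw$; integrating in $\tau$ over $[s,t]$ and discarding the nonnegative term $\rho_i(s,x;s)w(s,x)=w(s,x)$ gives
\[
w(t,x)\ge\frac{1}{\rho_i(t,x;s)}\int_s^t\rho_i(\tau,x;s)(K_iw)(\tau,x)\,d\tau .
\]

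I would then close a Gronwall loop on this inequality. Writing $w=w^+-w^-$ and using $\int\kappa\le 1$, one has $(K_iw)(\tau,x)\ge-\sup_y w^-(\tau,y)$; together with the bound $\rho_i(\tau,x;s)/\rho_i(t,x;s)\le C$ valid for $s\le\tau\le t\le T$, the right-hand side above is $\ge-C(t-s)\phi(t)$, where $\phi(t):=\sup_{\tau\in[s,t]}\sup_x w^-(\tau,x)$ is nonnegative and nondecreasing. Hence $\phi(t)\le C(t-s)\phi(t)$, which forces $\phi\equiv 0$ on $[s,s+\delta]$ whenever $\delta<1/C$; since $C$ may be fixed over the bounded interval $[s,T]$, finitely many such steps yield $w\ge 0$, i.e.\ $u^1\le u^2$, on all of $[s,T)$. (Alternatively one could argue by a maximum principle: for $\beta$ large, $e^{-\beta(t-s)}(u^2-u^1)+\e$ cannot first touch $0$ from above, since at a first contact point the differential inequality forces a strictly positive time derivative; I would prefer the integral version, as it accommodates the merely piecewise-differentiable sub- and super-solutions.)

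For part (2) I would set $w:=\Phi_i(t,s;m_i)u^2-\Phi_i(t,s;m_i)u^1$, which solves the homogeneous equation with $w(s,\cdot)\in X_i^+$ and $w(s,\cdot)\not\equiv 0$; part (1) gives $w\ge 0$, and now the displayed relation becomes the \emph{identity} $\rho_i(t,x;s)w(t,x)=w(s,x)+\int_s^t\rho_i(\tau,x;s)(K_iw)(\tau,x)\,d\tau$, all of whose terms are nonnegative and continuous in $(\tau,x)$. Fixing $t_0>s$ and choosing, via (K), $r_0>0$ with $\kappa>0$ on $B(0,2r_0)$: if $w(t_0,x_0)=0$ for some $x_0\in\Omega_i$, the identity forces $w(s,x_0)=0$ and $(K_iw)(\tau,x_0)=0$ for all $\tau\in[s,t_0]$, whence (using $w\ge 0$, $\kappa>0$ near $0$, and continuity of $w$) $w(\tau,\cdot)\equiv 0$ on $B(x_0,2r_0)\cap\Omega_i$ for every $\tau\in[s,t_0]$. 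Iterating this at every point of that ball shows that $Z:=\{y\in\Omega_i:\ w(\tau,y)=0\ \text{for all }\tau\in[s,t_0]\}$ is open; it is closed by continuity of $w$, and $x_0\in Z$, so by connectedness of $\Omega_i$ we get $Z=\Omega_i$, hence $w(s,\cdot)\equiv 0$, contradicting $u^1\not\equiv u^2$. Therefore $w(t_0,\cdot)>0$ on $\Omega_i$, i.e.\ $\Phi_i(t_0,s;m_i)u^1\ll\Phi_i(t_0,s;m_i)u^2$.

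Part (3) would then follow quickly from part (1): with $v^j:=\Phi_i(t,s;m_i^j)u_0$ one first has $v^j\ge 0$ (comparing with the sub-solution $0$), and since $\lambda_i m_i^1\le\lambda_i m_i^2$ (as $\lambda_i\ge 0$) and $v^2\ge 0$,
\[
\p_t v^2=K_iv^2-B_iv^2+\lambda_i m_i^1v^2+\lambda_i(m_i^2-m_i^1)v^2\ge K_iv^2-B_iv^2+\lambda_i m_i^1v^2 ,
\]
so $v^2$ is a super-solution and $v^1$ a solution of the $m_i^1$-equation with the common initial datum $u_0$, and part (1) gives $v^1(t,\cdot)\le v^2(t,\cdot)$. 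The main obstacle I anticipate is part (1): passing rigorously from the differential inequality for the weakly regular sub- and super-solutions to the closed Gronwall estimate through the $x$-dependent integrating factor; the second delicate point is the irreducibility/connectedness argument in part (2) that upgrades nonnegativity to strict positivity using only $\kappa(0)>0$ and connectedness of $\Omega_i$.
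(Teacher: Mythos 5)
Your proposal is correct and in fact supplies more than the paper does. For parts (1) and (2) the paper's proof consists solely of references to the arguments in \cite[Proposition 3.1]{ShZh0}; for part (3) its one-line argument --- observe that $u(t,\cdot;s,u_0,m_i^2)$ is a super-solution of the $m_i^1$-equation and invoke (1) --- is precisely yours. The integrating-factor/Gronwall mechanism you use for (1) and the ``zero set is open, closed and nonempty, hence all of $\Omega_i$'' propagation you use for (2), driven by $\kappa(0)>0$ together with connectedness of $D$ (resp.\ $\RR^N$), are the standard devices for such nonlocal comparison and strong-positivity statements and are presumably what the cited reference employs, so in spirit the approach agrees; you are simply making the delegated arguments explicit.

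One caveat worth recording. Your Gronwall step integrates $\p_\tau\bigl(\rho_i w\bigr)\ge \rho_i K_i w$ over $[s,t]$ and invokes the fundamental theorem of calculus, which requires $w=u^2-u^1$ to be continuous and piecewise $C^1$ in $t$. Definition \ref{super-sub-sol} nominally only asks for bounded, measurable, and differentiable off a finite set; if jumps in $t$ are genuinely allowed, the integrated inequality picks up boundary terms of the wrong sign unless the super-solution only jumps upward and the sub-solution only jumps downward, and the comparison as literally stated can fail. You already flag this as the delicate point, and it is a feature of the paper's formulation rather than of your proof: in the paper's applications the piecewise sub-solutions should be compared interval by interval across the exceptional times, which amounts to imposing exactly that one-sided jump condition. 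Apart from this regularity bookkeeping, parts (1), (2) and (3) are all sound.
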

\begin{proof}
(1)  follows from the arguments in \cite[Proposition 3.1 (1)]{ShZh0}.

(2) follows from the arguments in \cite[Proposition 3.1 (2)]{ShZh0}.

(3) We consider the case $i=1$. Other cases ca be proved similarly.

Note that $u_1(t, x; s, u_0, m_1^2)$ is a supersolution of \eqref{dirichlet-ori-evol} with $m_1$ being replaced by $m_1^1$. Then by (1),
\[
u_1(t, \cdot; s, u_0, m_1^1)\leq  u_1(t, \cdot; s, u_0, m_1^2)  \text{ for all } t>s.
\]
\end{proof}
For simplicity in notation, put $\Phi_i(T; m_i)=\Phi_i(T, 0; m_i) (i=1, 2, 3)$, and let $r(\Phi_i(T;m_i))$ be the spectral radius of $\Phi_i(T, 0; m_i)$.

\subsection{Basic properties of principal spectrum points/principal eigenvalues}
Our objective in this subsection is to  study some basic properties of  the principal spectrum point of the regular eigenvalue problem \eqref{EP-mui}.

Recall that  in \eqref{EP-mui}, $\mathcal (\mathcal L_i+\lambda_i m_i)u=-\p_t u-B_i u+K_iu+\lambda_i m_i u$,
and we may use  $\mu_i^n(\mathcal L_i+\lambda_i m_i)$ or $\mu_i^n(\lambda_i)$ to denote the principal  spectrum point of \eqref{EP-mui}, if no confusion occurs.
 And when $\lambda_i=0$, we use $\mu_i^n(0)$  to denote the principal  $\mathcal L_i$.

\begin{proposition}
\label{spectrum-sol-operator}
For $1\leq i\leq 3$, $\mu_i^n (\mathcal L_i+\lambda_i m_i)=\frac{\ln r(\Phi_i(T; m_i))}{T}$.
\end{proposition}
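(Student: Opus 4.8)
The plan is to identify $\mu_i^n(\mathcal L_i+\lambda_i m_i)$, the supremum of the real parts of the spectrum of the time-periodic operator $\mathcal L_i+\lambda_i m_i$ on $\mathcal X_i$, with the leading Floquet exponent of the associated non-autonomous evolution equation \eqref{dirichlet-ori-evol}--\eqref{periodic-ori-evol}, namely $\frac{1}{T}\ln r(\Phi_i(T;m_i))$. The natural route is a spectral mapping theorem: realize $\mathcal L_i+\lambda_i m_i$ as the generator of a strongly continuous semigroup $\{\mathcal T_i(\tau)\}_{\tau\ge 0}$ acting on $\mathcal X_i$ (the space of $T$-periodic continuous functions in $(t,x)$) by $(\mathcal T_i(\tau)v)(t,x)=(\Phi_i(t,t-\tau;m_i)v(t-\tau,\cdot))(x)$, i.e. the evolution semigroup. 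First I would record that this is indeed a $C_0$-semigroup on $\mathcal X_i$ with generator $\mathcal L_i+\lambda_i m_i$ on the domain $\mathcal D(\mathcal L_i)$ described in Section 2 (this is a standard identification of the generator of an evolution semigroup; here $K_i$ is bounded and $-B_i+\lambda_i m_i$ is a bounded multiplication operator, so only the $-\partial_t$ part is unbounded and the domain description is transparent).

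The key step is then the relation $\sigma(\mathcal T_i(T))\setminus\{0\}=\exp\big(T\,\sigma(\mathcal L_i+\lambda_i m_i)\big)$, the annular spectral mapping theorem for evolution semigroups, together with the fact that $\mathcal T_i(T)$ is precisely the monodromy-type operator $v\mapsto \Phi_i(T;m_i)v$ lifted to the periodic space, so that $r(\mathcal T_i(T))=r(\Phi_i(T;m_i))$. Here I would exploit two structural features that make the argument clean in this nonlocal setting: (a) because $K_i$ is a bounded (in fact, by (K), essentially a convolution-type) operator and the unperturbed flow $-\partial_t-B_i$ is a nilpotent-type translation, the Dyson--Phillips/variation-of-constants series shows $\Phi_i(t,s;m_i)$ depends continuously on its arguments and the evolution semigroup is eventually norm-continuous (or at least the essential spectral radius of $\mathcal T_i(T)$ is controlled), which upgrades the annular spectral mapping theorem to the statement that the peripheral spectrum of $\mathcal T_i(T)$ at radius $r(\Phi_i(T;m_i))$ corresponds exactly to the rightmost spectral values of $\mathcal L_i+\lambda_i m_i$; and (b) positivity — by the comparison principle (Proposition \ref{comparison}), $\Phi_i(t,s;m_i)$ is a positive operator, hence $\mathcal T_i(T)$ is positive on the Banach lattice $\mathcal X_i$, so its spectral radius $r(\Phi_i(T;m_i))$ itself belongs to $\sigma(\mathcal T_i(T))$. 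Combining (a) and (b): $r(\Phi_i(T;m_i))\in\sigma(\mathcal T_i(T))\setminus\{0\}=\exp(T\sigma(\mathcal L_i+\lambda_i m_i))$ gives a spectral value $\mu$ of $\mathcal L_i+\lambda_i m_i$ with $e^{T\mu}=r(\Phi_i(T;m_i))$, whence $\mathrm{Re}\,\mu=\frac{1}{T}\ln r(\Phi_i(T;m_i))$ and so $\mu_i^n(\mathcal L_i+\lambda_i m_i)\ge\frac{1}{T}\ln r(\Phi_i(T;m_i))$; conversely any $\nu\in\sigma(\mathcal L_i+\lambda_i m_i)$ yields $e^{T\nu}\in\sigma(\mathcal T_i(T))$, so $e^{T\,\mathrm{Re}\,\nu}=|e^{T\nu}|\le r(\mathcal T_i(T))=r(\Phi_i(T;m_i))$, giving the reverse inequality. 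Hence equality.

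The main obstacle I anticipate is the spectral mapping step itself: for a general $C_0$-semigroup one only has $\exp(T\sigma(A))\subseteq\sigma(\mathcal T(T))$ and not the converse, so one must genuinely verify that the evolution semigroup here falls into a class (eventually norm-continuous, or with $\sigma_{\mathrm{ess}}$ a single point, or hyperbolic-type) for which the annular spectral mapping theorem holds and for which the peripheral part is captured. The cleanest way around this, which I would use if a direct norm-continuity argument is awkward because $\kappa$ has compact support but is not smooth, is to appeal to the decomposition $\mathcal T_i(T)=\mathcal T_i^0(T)+\mathcal C$ where $\mathcal T_i^0(T)$ corresponds to dropping $K_i$ (a nilpotent or quasi-nilpotent translation-multiplication operator, so $r(\mathcal T_i^0(T))=0$) and $\mathcal C$ is built from the Dyson series in $K_i$; then an Arzel\`a--Ascoli / compactness-type estimate on the terms involving $K_i$ shows $r_{\mathrm{ess}}(\mathcal T_i(T))<r(\Phi_i(T;m_i))$ unless the latter is $0$ (the degenerate case, easily handled separately), so that the peripheral spectrum consists of eigenvalues/isolated points and the spectral mapping is automatic there. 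Alternatively, since this proposition is surely proved in or adapted from the references \cite{HuShVi, ShZh0} for the $\lambda_i=0$ case and $\lambda_i m_i$ is merely a bounded perturbation, I would simply cite the $\lambda_i=0$ version and note that absorbing $\lambda_i m_i$ into the zeroth-order coefficient $-b_i(x)+\lambda_i m_i(t,x)$ changes nothing in the argument; this is likely the route the authors take.
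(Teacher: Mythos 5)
The paper disposes of this proposition with a one-line citation to \cite[Proposition 3.10]{RaSh}; no argument is given in the text. Your proposal is therefore a genuine alternative: you reconstruct from scratch a proof that the cited reference likely contains. The reconstruction is essentially sound, and you even correctly anticipate at the end that the authors probably just cite, so there is no real disagreement of strategy --- only of level of detail.

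Two remarks on your sketch. First, the caveat that occupies most of your second half --- that the spectral mapping theorem can fail for general $C_0$-semigroups, so one must verify eventual norm continuity or control the essential spectral radius --- is correct in general but unnecessary here. Evolution semigroups are precisely one of the special classes for which the (annular) spectral mapping theorem holds unconditionally: for the evolution semigroup $\{\mathcal T_i(\tau)\}$ induced on $T$-periodic $X_i$-valued continuous functions by a periodic evolution family $\Phi_i(t,s)$, one always has $\sigma(\mathcal T_i(T))\setminus\{0\}=\exp\bigl(T\,\sigma(G_i)\bigr)$ where $G_i$ is the generator, and $\sigma(\mathcal T_i(T))\setminus\{0\}$ is the rotational closure of $\sigma(\Phi_i(T,0))\setminus\{0\}$ (the family $\Phi_i(t,t-T)$, $t\in[0,T)$, all share the nonzero spectrum of $\Phi_i(T,0)$ since $AB$ and $BA$ have the same nonzero spectrum). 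So the Dyson-series/compactness workaround you propose is dispensable, though it reflects the right instinct about where such arguments can go wrong. Second, you are right to flag positivity: the comparison principle (Proposition \ref{comparison}) makes $\Phi_i(T;m_i)$ a positive operator on the Banach lattice $X_i$, so $r(\Phi_i(T;m_i))\in\sigma(\Phi_i(T;m_i))$, which is what lets you upgrade the inequality $\mu_i^n\le\frac{1}{T}\ln r(\Phi_i(T;m_i))$ to equality. With these two observations your sketch closes into a complete proof; the paper simply delegates all of this to \cite{RaSh}.
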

\begin{proof}
It follows from \cite[Proposition 3.10]{RaSh}.
\end{proof}

\begin{proposition}
\label{spectrum-limit} Let $1\le i\le 3$.
Given any $u_0\in X_i^+\setminus\{0\}$,
$$
\mu_i^n(\mathcal{L}_i+\lambda_i m_i)=\lim_{t\to\infty}\frac{\ln\|\Phi_i(t,0;m_i)u_0\|}{t}.
$$
\end{proposition}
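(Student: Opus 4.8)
The plan is to deduce Proposition \ref{spectrum-limit} from Proposition \ref{spectrum-sol-operator} together with the comparison principle (Proposition \ref{comparison}) and standard properties of the evolution family $\{\Phi_i(t,0;m_i)\}$. By Proposition \ref{spectrum-sol-operator} it suffices to show that for any $u_0\in X_i^+\setminus\{0\}$,
\[
\lim_{t\to\infty}\frac{\ln\|\Phi_i(t,0;m_i)u_0\|}{t}=\frac{\ln r(\Phi_i(T;m_i))}{T}.
\]
First I would record the $T$-periodicity of the coefficients: since $m_i(t+T,x)=m_i(t,x)$, uniqueness of solutions gives the cocycle identity $\Phi_i(t+T,s+T;m_i)=\Phi_i(t,s;m_i)$, and in particular $\Phi_i(nT,0;m_i)=\Phi_i(T;m_i)^n$ for all $n\in\NN$. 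Writing $Q:=\Phi_i(T;m_i)$, this is a bounded positive operator on $X_i$, and by Proposition \ref{comparison}(2) it is strongly positive: $Q u_0\in X_i^{++}$ for any $u_0\in X_i^+\setminus\{0\}$ (apply part (2) with $u^1=0<u^2=u_0$, noting $t=T>0=s$).

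Next I would handle the subsequence $t=nT$. For $u_0\in X_i^{++}$ there exist constants $0<c\le C$ with $c\,\mathbf{1}\le u_0\le C\,\mathbf{1}$ where $\mathbf{1}$ denotes the constant function $1$ (in $X_1,X_2$) or the relevant positive eigenfunction-type comparison function; by the linearity and monotonicity of $Q$ (Proposition \ref{comparison}(2)), $\|Q^n u_0\|$ is trapped between $c\|Q^n\mathbf{1}\|$ and $C\|Q^n\mathbf{1}\|$, and since both bounds have the same exponential growth rate we get
\[
\lim_{n\to\infty}\frac{\ln\|Q^n u_0\|}{nT}=\frac{1}{T}\lim_{n\to\infty}\frac{\ln\|Q^n\|^{1/n}\cdot(\ \cdot\ )}{1}=\frac{\ln r(Q)}{T},
\]
where the last equality uses Gelfand's formula $r(Q)=\lim_{n\to\infty}\|Q^n\|^{1/n}$ together with the fact that for a strongly positive operator $\|Q^n u_0\|$ and $\|Q^n\|$ are comparable up to multiplicative constants independent of $n$ (again via $c\mathbf 1\le u_0\le C\mathbf 1$ and $\|Q^n\|\asymp\|Q^n\mathbf 1\|$, the latter because $Q^n$ is positive so its norm is attained, up to a constant, on the positive cone). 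For a general $u_0\in X_i^+\setminus\{0\}$, replace $u_0$ by $Qu_0\in X_i^{++}$, which only shifts $n$ by $1$ and hence does not affect the limit.

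Finally I would pass from the subsequence $nT$ to the full limit $t\to\infty$. Writing $t=nT+\tau$ with $\tau\in[0,T)$, the semigroup/cocycle law gives $\Phi_i(t,0;m_i)u_0=\Phi_i(\tau,0;m_i)\,Q^n u_0$ (using periodicity to write $\Phi_i(nT+\tau,nT;m_i)=\Phi_i(\tau,0;m_i)$), so $\|\Phi_i(t,0;m_i)u_0\|$ is bounded above by $\big(\sup_{\tau\in[0,T]}\|\Phi_i(\tau,0;m_i)\|\big)\|Q^n u_0\|$ and below by a comparable expression using $Q^{n+1}u_0=\Phi_i((n+1)T,0)u_0$ and monotonicity; dividing by $t$ and letting $t\to\infty$ squeezes the limit to $\frac{\ln r(Q)}{T}$, since $\ln t$-order corrections and the bounded factor $\|\Phi_i(\tau,0)\|$ vanish after division by $t$, and $n/t\to 1/T$. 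The main obstacle I anticipate is the uniform two-sided comparison $\|Q^n\|\asymp\|Q^n u_0\|$ with constants independent of $n$; this requires knowing $Q$ (equivalently each $\Phi_i$) maps $X_i^+$ into itself and is strongly positive, which is exactly Proposition \ref{comparison}(2), and that the unit ball of $X_i$ is controlled by the order interval $[-\mathbf 1,\mathbf 1]$, which is immediate for the sup norm — so the argument should go through cleanly once these order-theoretic facts are assembled.
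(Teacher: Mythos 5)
Your argument is correct and self-contained. The paper itself simply defers to the cited reference (Hutson--Shen--Vickers, Proposition 2.5 and Theorem 3.2 of \cite{HuShVi}) for this fact, so there is no in-paper proof to compare against; what you have done is supply the argument that the citation stands in for, and it is the standard one. The two points that carry the proof are both handled soundly: (i) the exact identity $\|Q^n\|=\|Q^n\mathbf 1\|$ for a positive operator $Q$ on a sup-normed space of continuous functions over a compact set (unit ball $=[-\mathbf 1,\mathbf 1]$), which combined with the order bounds $c\mathbf 1\le u_0\le C\mathbf 1$ (valid once $u_0\in X_i^{++}$, obtained from an arbitrary nonzero $u_0\in X_i^+$ by applying $Q$ once via the strong positivity in Proposition \ref{comparison}(2)) gives $\|Q^n u_0\|\asymp\|Q^n\|$ uniformly in $n$; and (ii) Gelfand's formula plus the cocycle/periodicity identities, which convert the subsequential limit along $t=nT$ into the full limit $t\to\infty$ after dividing by $t$ and noting that $\sup_{\tau\in[0,T]}\|\Phi_i(\tau,0;m_i)\|$ and $\sup_{\tau\in[0,T]}\|\Phi_i(T,\tau;m_i)\|$ are finite. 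One tiny cleanup you could make: the hedge ``$\mathbf 1$ or the relevant positive eigenfunction-type comparison function'' is unnecessary, since in all three cases ($i=1,2,3$) the constant function $\mathbf 1$ already lies in $X_i^{++}$ and serves as the comparison element.
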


\begin{proof}It follows from the arguments in \cite[Proposition 2.5 and Theorem 3.2]{HuShVi}.
\end{proof}

\begin{proposition}
\label{spectrum-autonomous}
Let $1\le i\le 3$.
\begin{itemize}
\item[(1)]
If $m_i(t,x)\equiv m_i(x)$, then the principal spectrum point of \eqref{EP-mui} equals to the principal spectrum point of
\eqref{autonomous-pev}.

\item[(2)] If $m_i(t,x)\equiv m_i(t)$, then $\mu_i^n(\mathcal{L}_i+\lambda_i m_i)=\mu_i^n(0)+\lambda_i  \widehat m_i$.
\end{itemize}
\end{proposition}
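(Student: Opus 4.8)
The plan is to reduce both statements to the already-established identity $\mu_i^n(\mathcal L_i+\lambda_i m_i)=\frac{\ln r(\Phi_i(T;m_i))}{T}$ from Proposition \ref{spectrum-sol-operator}, together with the characterization of $\mu_i^n$ as a growth rate from Proposition \ref{spectrum-limit}. For part (1), suppose $m_i(t,x)\equiv m_i(x)$. Then the evolution equation \eqref{dirichlet-ori-evol} (resp. \eqref{neumann-ori-evol}, \eqref{periodic-ori-evol}) is autonomous, so the evolution family reduces to a $C_0$-semigroup: $\Phi_i(t,0;m_i)=e^{tA_i}$, where $A_i=K_i-B_i+\lambda_i m_i(\cdot)$ acting on $X_i$. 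Hence $r(\Phi_i(T;m_i))=r(e^{TA_i})=e^{Ts(A_i)}$, where $s(A_i)=\sup\{\operatorname{Re}\mu\mid \mu\in\sigma(A_i)\}$ is exactly the principal spectrum point of \eqref{autonomous-pev} as defined in the paper (this uses the spectral mapping $\sigma(e^{TA_i})\setminus\{0\}=e^{T\sigma(A_i)}$, which holds here because $A_i$ is a bounded perturbation of the bounded operator $K_i-B_i$, so in fact $A_i$ is bounded and the spectral mapping theorem for $e^{TA_i}$ is elementary). Taking $\frac{1}{T}\ln$ of both sides and invoking Proposition \ref{spectrum-sol-operator} gives $\mu_i^n(\mathcal L_i+\lambda_i m_i)=s(A_i)$, which is the claim. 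Alternatively, and perhaps more cleanly given the tools in the excerpt, one can use Proposition \ref{spectrum-limit}: pick $u_0\in X_i^+\setminus\{0\}$; then $\frac{\ln\|\Phi_i(t,0;m_i)u_0\|}{t}\to\mu_i^n(\mathcal L_i+\lambda_i m_i)$, while the same limit along the autonomous semigroup $e^{tA_i}u_0$ computes the principal spectrum point of \eqref{autonomous-pev} by the analogous growth-rate characterization for the time-independent problem (which is the $T\to$ arbitrary, or $T=1$, special case of the periodic theory already cited).

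For part (2), suppose $m_i(t,x)\equiv m_i(t)$, a function of $t$ alone. The key observation is that the multiplication operator $\lambda_i m_i(t)$ commutes with $K_i-B_i$ (it is scalar multiplication at each fixed $t$), so one can solve \eqref{dirichlet-ori-evol}--\eqref{periodic-ori-evol} by the explicit change of variables $u(t,x)=e^{\lambda_i\int_0^t m_i(\tau)\,d\tau}\,v(t,x)$. Substituting, the factor $e^{\lambda_i\int_0^t m_i}$ cancels the $\lambda_i m_i(t)u$ term, and $v$ satisfies the unweighted equation $\partial_t v=K_iv-B_iv$, i.e. $v(t,\cdot)=e^{t(K_i-B_i)}v(0,\cdot)$. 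Therefore
\[
\Phi_i(T;m_i)=e^{\lambda_i\int_0^T m_i(\tau)\,d\tau}\,e^{T(K_i-B_i)}=e^{\lambda_i T\widehat m_i}\,\Phi_i(T;0),
\]
using $\int_0^T m_i(\tau)\,d\tau=T\widehat m_i$ from \eqref{hat-mi}. Taking spectral radii, $r(\Phi_i(T;m_i))=e^{\lambda_i T\widehat m_i}\,r(\Phi_i(T;0))$; taking $\frac1T\ln$ and applying Proposition \ref{spectrum-sol-operator} twice (once for $m_i$, once for $m_i\equiv 0$) yields $\mu_i^n(\mathcal L_i+\lambda_i m_i)=\lambda_i\widehat m_i+\mu_i^n(0)$, which is precisely the assertion.

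The routine parts are the change-of-variables computation in (2) and the verification that $\Phi_i(t,s;m_i)$ really does reduce to the claimed semigroup/scaled-semigroup form — these follow from uniqueness of solutions to the linear nonlocal evolution equation, which is implicit in the well-posedness stated before Definition \ref{super-sub-sol}. The one point that deserves care, and which I expect to be the main (mild) obstacle, is part (1): justifying that the spectral bound $s(A_i)$ of the autonomous generator equals $\frac1T\ln r(e^{TA_i})$. For a general $C_0$-semigroup the spectral bound and the growth bound can differ, so one should not invoke the spectral mapping theorem blindly. Here, however, $A_i=K_i-B_i+\lambda_i m_i(\cdot)$ is a \emph{bounded} operator on $X_i$ (it is the sum of the bounded integral operator $K_i$ and bounded multiplication operators), so $e^{TA_i}=\sum_{n\ge0}\frac{(TA_i)^n}{n!}$ converges in operator norm and the spectral mapping theorem $\sigma(e^{TA_i})=e^{T\sigma(A_i)}$ holds; in particular $r(e^{TA_i})=e^{T s(A_i)}$. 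This closes the gap. (If one prefers to avoid even this, the Proposition \ref{spectrum-limit} route sidesteps it entirely, at the cost of citing the time-independent analogue of that growth-rate characterization — which is itself a standard fact about positive semigroups generated by irreducible bounded operators, e.g. from the references \cite{BaZh, Cov, HuShVi} already invoked for \eqref{EP-mui}.)
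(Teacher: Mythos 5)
Your proposal is correct, and it follows the paper's route. For part (2) your argument is essentially verbatim what the paper does: the paper's proof consists of stating the identity $\Phi_i(t,0;m_i)=e^{\lambda_i\int_0^t m_i(s)ds}\,\Phi_i(t,0;0)$ (which you derive by the obvious change of variables) and combining it with the characterization of $\mu_i^n$ via the evolution family — you invoke Proposition \ref{spectrum-sol-operator} (spectral radius of the period map) while the paper invokes Proposition \ref{spectrum-limit} (exponential growth rate), but these are two faces of the same fact. For part (1) the paper simply cites \cite[Proposition 3.3]{ShXi1} together with Proposition \ref{spectrum-limit}, whereas you supply a self-contained argument: in the autonomous case the period map is $e^{TA_i}$ for the \emph{bounded} generator $A_i=K_i-B_i+\lambda_i m_i(\cdot)$ on $X_i$, and the spectral mapping theorem for bounded operators gives $r(e^{TA_i})=e^{Ts(A_i)}$, so dividing by $T$ and applying Proposition \ref{spectrum-sol-operator} identifies $\mu_i^n(\mathcal L_i+\lambda_i m_i)$ with the spectral bound $s(A_i)$, i.e. the principal spectrum point of \eqref{autonomous-pev}. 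Your closing remark — that boundedness of $A_i$ is precisely what licenses the spectral mapping step, sidestepping the spectral-bound/growth-bound mismatch that can occur for general $C_0$-semigroups — is the right point of caution and supplies exactly the detail the paper delegates to the external reference.
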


\begin{proof}
(1)  It follows from \cite[Proposition 3.3]{ShXi1} and Proposition \ref{spectrum-limit}.

(2) It follows from Proposition \ref{spectrum-limit} and the fact that
$$
\Phi_i(t,0;m_i)=e^{\lambda_i \int_0^t m_i(s)ds}\Phi_i(t,0;0).
$$
\end{proof}

 Recall that $h_i(t, x)$ is defined in \eqref{hi}. Set
\be
\label{hi(x)}
\mathcal H_i: \mathcal{D}(\mathcal {H}_i)\to \mathcal X_i, (\mathcal H_i u)(t, x)=-\p_t u(t, x)+h_i(t, x)u(t, x)
\ee
with $\mathcal D(\mathcal H_i)=\mathcal D(\mathcal L_i)$. Note that $\mathcal H_i+K_i=\mathcal L_i+\lambda_i m_i$. Hence, we may use
 $\mu_i^n(\mathcal H_i+K_i)$ or equivalently $\mu_i^n(\mathcal L_i+\lambda_i m_i)$ in different situations to denote the principal spectrum point of the eigenvalue problem \eqref{EP-mui}.  We use $\mathcal Re\{\sigma(\mathcal H_i)\}$  to denote the real part of the spectrum set of $\mathcal{H}_i$.

\begin{proposition}
\label{spectrum-Hi}
Let $1\le i\le 3$.
\begin{itemize}
\item[(1)]
 $\mathcal Re\{\sigma(\mathcal H_i)\}=[\widehat h_{i,\min},\widehat h_{i,\max}]$.

\item[(2)] $\mu_i^n(\mathcal{L}_i+\lambda_i m_i)\ge \mu_i^n(\mathcal{L}_i+\lambda_i \widehat m_i)\ge \widehat h_{i,\max}$.

\item[(3)] $\mu_i^n(\mathcal{L}_i+\lambda_i m_i)$ is the principal eigenvalue if and only if $\mu_i^n(\mathcal{L}_i+\lambda_i m_i)>
\widehat h_{i,\max}$.
\end{itemize}
\end{proposition}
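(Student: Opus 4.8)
The three parts concern the operator $\mathcal H_i = -\p_t + h_i(t,x)\cdot$, which is a pure multiplication-and-transport operator with no nonlocal coupling, so everything can be read off from the explicit flow. The plan for part (1) is to compute the solution operator of $\p_t u = -\p_t\!$-type equation explicitly: the evolution family associated with $\mathcal H_i$ is $(\Psi_i(t,s)u_0)(x) = e^{\int_s^t h_i(\tau,x)\,d\tau}u_0(x)$, which over one period acts as multiplication by $e^{T\widehat h_i(x)}$ (using the definition \eqref{hathi} of $\widehat h_i$ and \eqref{hat-mi}). Hence $\Psi_i(T,0)$ is multiplication by the continuous function $x\mapsto e^{T\widehat h_i(x)}$ on $X_i$, whose spectrum is the range $\{e^{T\widehat h_i(x)} : x\in\bar D_i\} = [e^{T\widehat h_{i,\min}}, e^{T\widehat h_{i,\max}}]$. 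Since $\mathcal H_i$ is $T$-periodic, the relation between $\sigma(\mathcal H_i)$ and the Floquet operator $\Psi_i(T,0)$ (as already used for $\mathcal L_i+\lambda_i m_i$ in Proposition \ref{spectrum-sol-operator}, citing \cite[Proposition 3.10]{RaSh}) gives that the real parts of $\sigma(\mathcal H_i)$ are exactly $\frac1T\ln$ of the moduli of $\sigma(\Psi_i(T,0))$, i.e. $\mathcal{R}e\{\sigma(\mathcal H_i)\} = [\widehat h_{i,\min},\widehat h_{i,\max}]$. I would also note the alternative route: directly solve the eigenvalue equation $-\p_t\phi + h_i(t,x)\phi = \mu\phi$ with $\phi$ $T$-periodic, which forces $e^{\int_0^T(h_i(\tau,x)-\mu)\,d\tau}=1$ at any $x$ where $\phi(\cdot,x)\not\equiv 0$, giving the same interval.

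For part (2), the first inequality $\mu_i^n(\mathcal L_i+\lambda_i m_i)\ge \mu_i^n(\mathcal L_i+\lambda_i\widehat m_i)$ follows from a convexity/averaging argument: by Proposition \ref{spectrum-limit}, $\mu_i^n$ is the exponential growth rate $\lim_{t\to\infty}\frac1t\ln\|\Phi_i(t,0;m_i)u_0\|$ for any $u_0\in X_i^+\setminus\{0\}$, and the standard fact (Jensen/Kingman-type, or via the comparison principle Proposition \ref{comparison}(3) after a time-periodic change of variables removing $\widehat m_i$) that the time-averaged weight $\widehat m_i$ gives the smallest growth rate among all weights with that average. Concretely, I would write $u(t,x)=e^{\lambda_i\int_0^t(m_i(s,x)-\widehat m_i(x))\,ds}v(t,x)$ — but the exponent depends on $x$, so this substitution does not diagonalize the nonlocal term cleanly; instead the cleanest argument is to invoke the known monotonicity result (cf. \cite{HuShVi, RaSh}) that $\mu_i^n$ as a function of the weight is convex and translation-covariant, hence $\mu_i^n(\mathcal L_i+\lambda_i m_i)\ge \mu_i^n(\mathcal L_i + \lambda_i\widehat m_i)$. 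For the second inequality $\mu_i^n(\mathcal L_i+\lambda_i\widehat m_i)\ge\widehat h_{i,\max}$: since $\mathcal L_i + \lambda_i\widehat m_i = \mathcal H_i + K_i$ with $\widehat m_i$ time-independent, and $K_i$ is a positive operator, a perturbation/positivity argument gives $\mu_i^n(\mathcal H_i+K_i)\ge \sup\mathcal{R}e\{\sigma(\mathcal H_i)\} = \widehat h_{i,\max}$ by part (1); more directly, test with the eigenfunction $\phi_0$ of $\mathcal H_i$ at $\widehat h_{i,\max}$ (or an approximate eigenfunction concentrated near the maximum point $x_0$ of $\widehat h_i$), on which $K_i\phi_0\ge 0$, and use Proposition \ref{spectrum-limit} to conclude the growth rate is at least $\widehat h_{i,\max}$.

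For part (3), the characterization "$\mu_i^n$ is a principal eigenvalue iff $\mu_i^n(\mathcal L_i+\lambda_i m_i) > \widehat h_{i,\max}$" is the time-periodic nonlocal analogue of the known criterion for the autonomous case. The plan is: if $\mu := \mu_i^n(\mathcal L_i+\lambda_i m_i) > \widehat h_{i,\max}$, then $\mu - h_i(t,x)$ is bounded away from zero in a time-averaged sense (indeed $\frac1T\int_0^T(\mu - h_i(\tau,x))\,d\tau = \mu - \widehat h_i(x) \ge \mu - \widehat h_{i,\max} > 0$), so the operator $\mathcal H_i - \mu$ is boundedly invertible with a positivity-improving-type resolvent, and one rewrites the eigenvalue equation $(\mathcal H_i + K_i - \mu)\phi = 0$ as a fixed-point equation $\phi = (\mu - \mathcal H_i)^{-1}K_i\phi$ for a compact (the kernel $\kappa$ has compact support, so $K_i$ is "nice") positive operator, whose spectral radius equals $1$ exactly when $\mu = \mu_i^n$; Krein–Rutman-type reasoning applied to this positive operator (now legitimate because the reformulated operator $(\mu-\mathcal H_i)^{-1}K_i$ involves $K_i$ which, combined with the smoothing in $t$ from the resolvent, recovers enough compactness — or one argues directly via the positivity of the evolution family) yields a positive eigenfunction $\phi\in\mathcal X_i$. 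Conversely, if a positive eigenfunction exists at $\mu = \mu_i^n$, then evaluating $(\mathcal H_i + K_i)\phi = \mu\phi$ at a space-time point where $\phi/\!\!\int K_i\phi$-ratio is extremal, together with $K_i\phi > 0$ strictly (by positivity of $\kappa$ near $0$ and irreducibility), forces $\mu > \widehat h_i(x)$ at the relevant $x$, and a more careful argument integrating against $\phi$ over a period upgrades this to $\mu > \widehat h_{i,\max}$. The main obstacle will be the forward direction of part (3): making the fixed-point/Krein–Rutman argument rigorous despite the genuine lack of compactness of the nonlocal operator — one must either exploit the extra regularity in $t$ coming from the resolvent $(\mu - \mathcal H_i)^{-1}$ (which does involve integration in time, hence some compactness), or fall back on the characterization of $\mu_i^n$ as a growth rate (Proposition \ref{spectrum-limit}) together with a positive-cone argument to extract the eigenfunction as a limit, which is the delicate technical heart of the matter.
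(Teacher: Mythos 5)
Your proposal is mathematically sound and, on every point, gives a self-contained sketch of arguments that the paper itself handles entirely by citation: part~(1) is delegated to Lemma~3.7 of \cite{HuShVi}, the first inequality in~(2) to Theorem~C of \cite{RaSh}, the second to Proposition~\ref{spectrum-autonomous}(1) together with Proposition~3.9 of \cite{ShXi0}, and part~(3) to Theorem~A of \cite{RaSh}. So there is no disagreement in approach --- your sketch is a correct reconstruction of the underlying mathematics behind those references, and you have accurately identified the forward implication of~(3) (producing the eigenfunction when $\mu_i^n>\widehat h_{i,\max}$) as the genuinely delicate step, for exactly the reason you state: the resolvent $(\mu-\mathcal H_i)^{-1}$ smooths in $t$ but $K_i$ by itself is not compact in $x$, which is precisely the issue \cite[Theorem~A]{RaSh} resolves.

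Two small improvements. First, for the converse of~(3) your ``extremal ratio'' device is more elaborate than needed: once a positive eigenfunction $\phi\in\mathcal X_i^{++}$ is in hand, simply divide the eigenvalue equation by $\phi$ and integrate over one period; the $-\p_t\phi/\phi$ term drops out by periodicity and one obtains $\mu-\widehat h_i(x)=\frac1T\int_0^T\frac{(K_i\phi)(t,x)}{\phi(t,x)}\,dt>0$ for every $x\in\bar D_i$, whence $\mu>\widehat h_{i,\max}$ immediately --- this is in fact the same computation the paper later deploys in the proofs of Theorems~\ref{PSP-neum} and~\ref{PSP-peri}. Second, for the first inequality in~(2) you are candid that the pointwise change of variables $u=e^{\lambda_i\int_0^t(m_i-\widehat m_i)\,ds}v$ does not commute with $K_i$; the fix used in \cite{RaSh} is not a substitution but a comparison-principle argument (Jensen at the level of the period map, using convexity of $r\mapsto e^r$ under the kernel average), so ``fall back on citing convexity of $\mu_i^n$ in the weight'' is the right move but it would be worth naming the mechanism. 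Everything else --- the explicit period map of $\mathcal H_i$ as multiplication by $e^{T\widehat h_i(x)}$ in~(1), and the positivity comparison $\mu_i^n(\mathcal H_i+K_i)\ge\widehat h_{i,\max}$ since $K_i\ge0$ in~(2) --- is exactly what the references do.
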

\begin{proof}
(1) It follows from Lemma 3.7 in \cite{HuShVi}.

(2) $\mu_i^n(\mathcal{L}_i+\lambda_i m_i)\ge \mu_i^n(\mathcal{L}_i+\lambda_i \widehat m_i)$ follows from \cite[Theorem C]{RaSh}, and $\mu_i^n(\mathcal{L}_i+\lambda_i \widehat m_i)\ge \widehat h_{i,\max}$ follows from Proposition \ref{spectrum-autonomous} (1) and \cite[Proposition 3.9]{ShXi0}.

(3) It follows from \cite[Theorem A]{RaSh}.
\end{proof}

For fixed $\lambda_i$, let (S1), (S2), and (S3) be the following standing assumptions.

 \smallskip
\noindent {\bf (S1)}  {\it For $1\leq i\leq 3$,
$\widehat h_i(\cdot;\lambda_i)$ is  $C^N$, there is some $x_0\in {\rm{Int}} D_i$ in the case $i=1, 2$ and $x_0\in D_3$ in the case of $i=3$ satisfying that $\widehat h_i(x_0;\lambda_i)=\widehat h_{i, \max}$, and the partial derivatives of $\widehat h_i(x;\lambda_i)$ up to order $N-1$ at $x_0$ are zero.
}

\medskip

\noindent {\bf (S2)}  { \it  $| \lambda_i|( \widehat m_{i,\max}-\widehat m_{i, \min})<\inf_{x\in \bar D_i}\int_{D_i}\kappa(y-x)dy$ in the case of $i= 2$, and $|\lambda_i|(\widehat m_{i,\max}-\widehat m_{i, \min})<1$ in the case of $i=1, 3$.
}

\medskip

\noindent{\bf (S3)} {\it  $\ds\int_{D_i}\frac{1}{\widehat h_{i,\max}-\widehat h_i(x;\lambda_i)}dx=\infty$ for $i=1, 2, 3$.}
\smallskip

\smallskip
Note that, if $|\lambda_i|\ll 1$,  then the condition (S2) is automatically satisfied for $i=1, 2, 3$.
Note also that (S1) implies (S3),  and when $i=1$ or $3$,   (S3) holds if and only if
$\ds\int_{D_i}\frac{1}{\widehat m_{i,\max}-\widehat m_i(x)}dx=\infty$.

\begin{proposition}
\label{pv-existence}
\begin{itemize}
\item[(1)] For given $\lambda_i\in\RR$,  if (S1) or (S2) or (S3) is satisfied, then $\mu_i^n(\lambda_i)$ is the principal eigenvalue of
$\mathcal{L}_i+\lambda_i m_i$.

\item[(2)] For given $\lambda_i\in\RR$, if $\mu_i^n(\lambda_i)$ is not the principal eigenvalue of $\mathcal{L}_i+\lambda_i m_i$,
then
$$
\mu_i^n(\lambda_i)=\widehat h_{i,\max}.
$$
 \end{itemize}
\end{proposition}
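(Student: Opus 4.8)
Statement (2) is immediate from Proposition~\ref{spectrum-Hi}: part~(2) there gives $\mu_i^n(\lambda_i)\ge\widehat h_{i,\max}$ unconditionally, while part~(3) says that $\mu_i^n(\lambda_i)$ is a principal eigenvalue iff $\mu_i^n(\lambda_i)>\widehat h_{i,\max}$; so if $\mu_i^n(\lambda_i)$ fails to be a principal eigenvalue then necessarily $\mu_i^n(\lambda_i)=\widehat h_{i,\max}$. For~(1), by Proposition~\ref{spectrum-Hi}(3) it is enough to prove the strict inequality $\mu_i^n(\lambda_i)>\widehat h_{i,\max}$. Since each of (S1), (S2), (S3) involves only $\widehat h_i$ (equivalently $\widehat m_i$), I would first pass to the time-averaged operator: by Proposition~\ref{spectrum-Hi}(2) and Proposition~\ref{spectrum-autonomous}(1),
$$
\mu_i^n(\mathcal L_i+\lambda_i m_i)\ \ge\ \mu_i^n(\mathcal L_i+\lambda_i\widehat m_i)\ =:\ \widehat\mu_i^n ,
$$
where $\widehat\mu_i^n$ is the largest real part of the spectrum of $\mathcal A_i:=K_i-B_i+\lambda_i\widehat m_i=K_i+\widehat h_i(\cdot)$ on $X_i$ (the autonomous problem \eqref{autonomous-pev} with weight $\widehat m_i$), and $\widehat\mu_i^n\ge\widehat h_{i,\max}$ already by Proposition~\ref{spectrum-Hi}(2). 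Thus it suffices to prove $\widehat\mu_i^n>\widehat h_{i,\max}$ under any one of (S1)--(S3); and since (S1) implies (S3), only (S2) and (S3) require work.

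Under (S2) (nearly constant coefficient) I would use a soft perturbation argument. Take $\lambda_i\ge0$ (the case $\lambda_i<0$ is symmetric, with $\widehat m_{i,\min}$ replaced by $\widehat m_{i,\max}$) and write $\mathcal A_i=\bigl(K_i-B_i+\lambda_i\widehat m_{i,\min}I\bigr)+\lambda_i(\widehat m_i-\widehat m_{i,\min})$. For $i=2,3$ the operator $K_i-B_i$ has $0$ as its principal spectrum point with positive eigenfunction $\mathbf 1$; the added term $\lambda_i(\widehat m_i-\widehat m_{i,\min})\ge0$ has sup-norm $<\inf_x b_i(x)$ by (S2), so by monotonicity of the spectral bound $\widehat\mu_i^n\ge\lambda_i\widehat m_{i,\min}$, whereas
$$
\widehat h_{i,\max}=\max_x\bigl(-b_i(x)+\lambda_i\widehat m_i(x)\bigr)\le -\inf_x b_i(x)+\lambda_i(\widehat m_{i,\max}-\widehat m_{i,\min})+\lambda_i\widehat m_{i,\min}<\lambda_i\widehat m_{i,\min}\le\widehat\mu_i^n .
$$
For $i=1$ the same scheme works, comparing $\mathcal A_1$ with $K_1-B_1+\lambda_1\widehat m_{1,\min}I$ and using that the principal spectrum point of $K_1-B_1$ is strictly separated from its essential spectrum $\{-1\}$.

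Under (S3) (which contains (S1)) I would argue through the resolvent of $\mathcal A_i$. For $\mu>\widehat h_{i,\max}$ the multiplication operator $(\mu I-\widehat h_i(\cdot))^{-1}$ is bounded and positive on $X_i$, the factorization $\mu I-\mathcal A_i=(\mu I-\widehat h_i(\cdot))\bigl(I-(\mu I-\widehat h_i(\cdot))^{-1}K_i\bigr)$ is valid, and hence $\mu$ lies in the resolvent set of $\mathcal A_i$ as soon as the positive operator $S_\mu:=K_i(\mu I-\widehat h_i(\cdot))^{-1}$ has spectral radius $r(S_\mu)<1$; the function $\mu\mapsto r(S_\mu)$ is continuous, nonincreasing on $(\widehat h_{i,\max},\infty)$, and tends to $0$ as $\mu\to\infty$. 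Therefore $\widehat\mu_i^n>\widehat h_{i,\max}$ precisely when $r(S_\mu)>1$ for $\mu$ slightly above $\widehat h_{i,\max}$. To prove the latter under (S3), choose $x_0\in\bar D_i$ with $\widehat h_i(x_0)=\widehat h_{i,\max}$ near which $\int\frac{dx}{\widehat h_{i,\max}-\widehat h_i(x)}$ already diverges, apply the adjoint $S_\mu^{*}$ (acting on measures) to $\delta_{x_0}$ and iterate; using $\kappa(0)>0$ (so that $\kappa(\cdot-x_0)$ is bounded below near $x_0$) together with the divergence in (S3), one shows that the total masses of $(S_\mu^{*})^n\delta_{x_0}$ grow exponentially once $\mu$ is close enough to $\widehat h_{i,\max}$, which forces $r(S_\mu)>1$ there. (Equivalently, this autonomous statement may be quoted from the principal-eigenvalue criteria of Coville~\cite{Cov} and of Shen--Zhang~\cite{ShZh0}.) In every case $\mu_i^n(\lambda_i)\ge\widehat\mu_i^n>\widehat h_{i,\max}$, so $\mu_i^n(\lambda_i)$ is a principal eigenvalue by Proposition~\ref{spectrum-Hi}(3), completing~(1).

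I expect the exponential-growth estimate for $(S_\mu^{*})^n\delta_{x_0}$ under (S3) to be the main obstacle: the divergence of $\int_{D_i}\frac{dx}{\widehat h_{i,\max}-\widehat h_i(x)}$ is a delicate condition concentrated on the set where $\widehat h_i$ attains its maximum, and converting it into a lower bound $r(S_\mu)>1$ for $\mu$ near $\widehat h_{i,\max}$ needs a careful localized analysis of the nonlocal kernel around $x_0$ — this is exactly where the normalization $\kappa(0)>0$ is essential. By contrast, the (S2) case is only a soft perturbation argument, and (S1) is absorbed into (S3).
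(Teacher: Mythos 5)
Your part (2) and the reduction of (1) to the time-averaged autonomous operator via Proposition \ref{spectrum-Hi}(2) and Proposition \ref{spectrum-autonomous}(1) are both correct, and considerably more explicit than the paper's own proof, which disposes of (S1)/(S2) simply by citing \cite[Theorem B]{RaSh} and of (S3) by citing \cite[Theorem 1.1]{Cov}. Your verification of (S2) for $i=2,3$ is correct, and for (S3) you ultimately invoke the same citation as the paper; the sketched exponential-growth estimate for $(S_\mu^{*})^n\delta_{x_0}$ is only heuristic and would take substantial work to carry out, so that part is a restatement of the cited result rather than a new proof.

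The genuine gap is your (S2) argument for $i=1$. You write that ``the same scheme works'' using the separation between the principal spectrum point of $K_1-B_1$ and its essential spectrum $\{-1\}$, but that separation is exactly $r(K_1)$, which is strictly less than $1$ and can be arbitrarily small (for instance, if ${\rm diam}\,D$ is less than the radius on which $\kappa$ is constant, then $K_1$ is rank one with $r(K_1)=|D|\,\kappa(0)\ll 1$). Carrying your comparison out honestly gives $\widehat\mu_1^n\ge r(K_1)+\widehat h_{1,\min}$, whereas $\widehat h_{1,\max}-\widehat h_{1,\min}=\lambda_1(\widehat m_{1,\max}-\widehat m_{1,\min})$; hence the conclusion $\widehat\mu_1^n>\widehat h_{1,\max}$ follows only under $\lambda_1(\widehat m_{1,\max}-\widehat m_{1,\min})<r(K_1)$, which is strictly stronger than the hypothesis $<1$ in (S2). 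Moreover this is not an artifact of a weak lower bound: taking $N=3$, $D=B(0,1)$, $\kappa$ the normalized indicator of $B(0,2)$ (so $K_1$ is rank one with $r(K_1)=1/8$) and $\widehat m_1(x)=-\lvert x\rvert^2$, one computes directly that for every $\lambda_1\in[3/8,1)$ there is no eigenvalue above $\widehat h_{1,\max}=-1$, yet (S2) as written holds. So your scheme cannot be repaired for $i=1$ under the stated hypothesis; the version that your argument does prove (and which matches the $i=2$ case and presumably the condition actually appearing in \cite{RaSh}) is $\lvert\lambda_1\rvert(\widehat m_{1,\max}-\widehat m_{1,\min})<\inf_{x\in\bar D}\int_D\kappa(y-x)\,dy$, a lower bound for $r(K_1)$.
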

\begin{proof}
(1) The result that (S1) and (S2) are sufficient conditions for  $\mu_i^n(\lambda_i)$ to be the principal eigenvalue follows from the argument in Theorem B in \cite{RaSh}.
(S3) is a sufficient condition  for  $\mu_i^n(\lambda_i)$ to be the principal eigenvalue follows from
\cite[Theorem 1.1]{Cov} and Proposition \ref{spectrum-Hi}.

(2) It follows from   Proposition \ref{spectrum-Hi}.
\end{proof}

The next proposition  is essential to prove our main theorems.
Each property in the proposition  is also of independent interest.

\begin{proposition}
\label{property-PSP-prop}
Let $1\le i\le 3$.
\begin{itemize}
\item[(1)]
 Assume $m_i, m_{i, k} \in \mathcal X_i$ with $\lambda_i^km_{i,k}\to \lambda_im_i$ as $k\to\infty$  in $\mathcal{X}_i$.
 Then $\mu_i^n( \mathcal L_i+\lambda_i^k m_{i,k})\to\mu_i^n(\mathcal L_i+\lambda_i m_i)$ as $k\to\infty$.

\item[(2)] We have $\mu_i^n(0)<0$ for $i=1$, and  $\mu_i^n(0)=0$ for $i=2, 3$.

\item[(3)]  If $\lambda_i^1m_{i}^1(t, x)\leq \lambda_i^2m_{i}^2(t, x)$, then $\mu_i^n(\mathcal L_i+\lambda_i^1m_{i}^1)\leq \mu_i^n(\mathcal L_i+\lambda_i^2m_{i}^2)$. If,  in addition,
$\lambda_i^1 m_i^1(t,x)\not \equiv \lambda_i^2 m_i^2(t,x)$, then  $\mu_i^n(\mathcal L_i+\lambda_i^1m_{i}^1)< \mu_i^n(\mathcal L_i+\lambda_i^2m_{i}^2)$ .

\item[(4)] If  $\mathcal P(m_i)=\int_0^T\max_{x\in \bar D_i}m_i(t, x)dt>0$, then $\mu_i^n(\mathcal L_i+\lambda_i m_i)>0$ for $\lambda\gg1$.

\item[(5)]  The mapping  $\lambda_i\in \R^+\mapsto \mu_i^n(\mathcal L_i+\lambda_i m_i)\in \R$ is convex.

\end{itemize}
\end{proposition}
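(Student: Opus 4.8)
The proof runs through the two descriptions of the principal spectrum point already in hand: Proposition~\ref{spectrum-sol-operator}, $\mu_i^n(\mathcal L_i+\lambda_i m_i)=\tfrac1T\ln r(\Phi_i(T;m_i))$, and the more flexible Proposition~\ref{spectrum-limit}, $\mu_i^n(\mathcal L_i+\lambda_i m_i)=\lim_{t\to\infty}\tfrac1t\ln\|\Phi_i(t,0;m_i)u_0\|$ for any fixed $u_0\in X_i^+\setminus\{0\}$, together with the comparison principle (Proposition~\ref{comparison}) and the elementary scaling identity $\Phi_i(t,s;m_i+c)=e^{c(t-s)}\Phi_i(t,s;m_i)$ for a constant $c$. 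For (1): put $\delta_k=\|\lambda_i^k m_{i,k}-\lambda_i m_i\|_{\mathcal X_i}\to0$; since $\lambda_i m_i-\delta_k\le\lambda_i^k m_{i,k}\le\lambda_i m_i+\delta_k$, the functions $e^{\pm\delta_k t}\Phi_i(t,0;\lambda_i m_i)u_0$ are a super- and a sub-solution of the $\lambda_i^k m_{i,k}$-equation agreeing with $\Phi_i(t,0;\lambda_i^k m_{i,k})u_0$ at $t=0$, so Proposition~\ref{comparison}(1) sandwiches the latter between them; dividing the logarithm of the norm by $t$, letting $t\to\infty$ and using Proposition~\ref{spectrum-limit} gives $|\mu_i^n(\mathcal L_i+\lambda_i^k m_{i,k})-\mu_i^n(\mathcal L_i+\lambda_i m_i)|\le\delta_k\to0$. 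For (2): when $i=2$ or $3$ the constant function $1$ solves the $\lambda_i=0$ evolution equation because $K_i1=B_i1$, so $\Phi_i(t,0;0)1\equiv1$ and Proposition~\ref{spectrum-limit} forces $\mu_i^n(0)=0$; when $i=1$, Proposition~\ref{spectrum-autonomous}(1) reduces $\mu_1^n(0)$ to the top spectral point of $K_1-B_1$ on $C(\bar D)$ (here $b_1\equiv1$), namely $r(K_1)-1$, and $r(K_1)<1$ follows by integrating the positive principal eigenfunction $\phi$ of the compact, irreducible operator $K_1$ over $D$: $r(K_1)\int_D\phi=\int_D\phi\,b_2<\int_D\phi$, since $b_2\le1$ with $b_2<1$ near $\partial D$ (a consequence of (K), $\mathrm{supp}\,\kappa$ being a neighbourhood of $0$).

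For (5), fix $u_0\equiv1$, let $\theta\in(0,1)$, $\lambda_i=\theta\lambda_i'+(1-\theta)\lambda_i''$, and set $u'(t)=\Phi_i(t,0;\lambda_i' m_i)u_0\gg0$, $u''(t)=\Phi_i(t,0;\lambda_i'' m_i)u_0\gg0$. Writing the evolution equation in logarithmic form one checks that $w:=(u')^{\theta}(u'')^{1-\theta}$ is a super-solution of the $\lambda_i m_i$-equation: after cancelling the common terms, the needed inequality is
\[
\theta\left(\tfrac{u''(x)}{u'(x)}\right)^{1-\theta}(K_iu')(x)+(1-\theta)\left(\tfrac{u'(x)}{u''(x)}\right)^{\theta}(K_iu'')(x)\ge\left(K_i\big((u')^{\theta}(u'')^{1-\theta}\big)\right)(x),
\]
which follows by applying the weighted arithmetic--geometric mean inequality pointwise to the integrand of $K_i$. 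Since $w(0)=u_0=\Phi_i(0,0;\lambda_i m_i)u_0$, Proposition~\ref{comparison}(1) gives $\Phi_i(t,0;\lambda_i m_i)u_0\le (u'(t))^{\theta}(u''(t))^{1-\theta}$, hence $\|\Phi_i(t,0;\lambda_i m_i)u_0\|\le\|u'(t)\|^{\theta}\|u''(t)\|^{1-\theta}$, and Proposition~\ref{spectrum-limit} yields convexity. For (4), let $\bar m_i(t)=\max_{x\in\bar D_i}m_i(t,x)$, a $T$-periodic function of $t$ alone with $\bar m_i\ge m_i$ and time-average $\tfrac1T\mathcal P(m_i)$; by (3) and Proposition~\ref{spectrum-autonomous}(2), $\mu_i^n(\mathcal L_i+\lambda_i m_i)\ge\mu_i^n(\mathcal L_i+\lambda_i\bar m_i)=\mu_i^n(0)+\tfrac{\lambda_i}{T}\mathcal P(m_i)$, which tends to $+\infty$ as $\lambda_i\to\infty$ precisely when $\mathcal P(m_i)>0$.

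Item (3) is where the real work lies. The non-strict inequality is immediate: $\lambda_i^1 m_i^1\le\lambda_i^2 m_i^2$ gives $\Phi_i(t,0;\lambda_i^1 m_i^1)u_0\le\Phi_i(t,0;\lambda_i^2 m_i^2)u_0$ by Proposition~\ref{comparison}(3), and Proposition~\ref{spectrum-limit} turns this into $\mu_i^n(\mathcal L_i+\lambda_i^1 m_i^1)\le\mu_i^n(\mathcal L_i+\lambda_i^2 m_i^2)$. For strictness, write $g_1=\lambda_i^1 m_i^1\le g_2=\lambda_i^2 m_i^2$ with $g_1\not\equiv g_2$, and first prove a strict super/sub-solution lemma: if $\phi\in\mathcal X_i$, $\phi\gg0$, and $(\mathcal L_i+g)\phi\le\mu\phi$ with $(\mathcal L_i+g)\phi\not\equiv\mu\phi$, then $\mu_i^n(\mathcal L_i+g)<\mu$ (dually, with $\ge$ and $>$). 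Indeed $\phi$ is then a strict super-solution of $\partial_t u=K_iu-B_iu+(g-\mu)u$; comparing with the exact solution issued from $\phi(0)$ and using the Duhamel formula together with the strong positivity in Proposition~\ref{comparison}(2) produces, after one period, $\Phi_i(T;g-\mu)\phi(0)\le(1-c_0)\phi(0)$ for some $c_0\in(0,1)$, whence $r(\Phi_i(T;g-\mu))\le1-c_0<1$ and $\mu_i^n(\mathcal L_i+g)<\mu$ by the scaling identity and Proposition~\ref{spectrum-sol-operator}; the sub-solution case is symmetric. Now if $\mathcal L_i+g_1$ has a principal eigenfunction $\phi_1\gg0$ (e.g. if (S1), (S2) or (S3) holds, Proposition~\ref{pv-existence}), then $(\mathcal L_i+g_2)\phi_1=\mu_i^n(\mathcal L_i+g_1)\phi_1+(g_2-g_1)\phi_1\ge\mu_i^n(\mathcal L_i+g_1)\phi_1$ with strict inequality somewhere, so the lemma gives $\mu_i^n(\mathcal L_i+g_2)>\mu_i^n(\mathcal L_i+g_1)$; symmetrically, a principal eigenfunction of $\mathcal L_i+g_2$ serves as a strict super-solution for $\mathcal L_i+g_1$ and again yields the strict gap.

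The one case the above does not cover --- and the step I expect to be the main obstacle --- is when neither $\mathcal L_i+g_1$ nor $\mathcal L_i+g_2$ has a principal eigenvalue, so by Proposition~\ref{spectrum-Hi}(3) their principal spectrum points equal $\max_{x\in\bar D_i}(-b_i(x)+\widehat g_1(x))$ and $\max_{x\in\bar D_i}(-b_i(x)+\widehat g_2(x))$, which can a priori coincide although $g_1\not\equiv g_2$ (the extra weight $g_2-g_1$ may be concentrated away from the argmax of $-b_i+\widehat g_1$). I would attack this by perturbing $g_2$ slightly upward with the bump construction of Lemma~\ref{technical-lm} so that (S1) holds for the perturbation --- restoring a principal eigenvalue by Proposition~\ref{pv-existence} --- applying the previous case to get a strict gap, and then letting the perturbation vanish via the continuity in item~(1). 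The subtle point is to choose the perturbation so that this limit keeps the inequality strict rather than degenerating to the non-strict one; this will require a quantitative version of the Duhamel/strong-positivity estimate in the lemma, bounding the gain from below in terms of $\|g_2-g_1\|$ and of how the support of $g_2-g_1$ is coupled through $\kappa$ to the rest of $D_i$.
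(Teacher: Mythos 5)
Your treatments of (1), (2), and (5) are correct, and (1) and (5) are in fact cleaner than the paper's. For (1) the paper simply cites \cite{RaSh}; your sandwich via the additive scaling identity and Proposition~\ref{comparison}(1) is self-contained. For (5) the paper first perturbs $m_i$ to weights $m_i^{j,k}$ admitting principal eigenfunctions (via Lemma~\ref{technical-lm}), takes geometric means of the eigenfunctions, and passes to the limit through (1); you apply the same AM--GM trick directly to the solution flows $\Phi_i(t,0;\lambda_i' m_i)u_0$, $\Phi_i(t,0;\lambda_i'' m_i)u_0$ and bypass the approximation. For (2) the two proofs of $\mu_1^n(0)<0$ differ only in the pairing: the paper multiplies the eigenvalue relation by $\phi_1$ and exploits $\kappa(-z)=\kappa(z)$ to produce $-\tfrac12\int\!\int\kappa(y-x)(\phi_1(x)-\phi_1(y))^2\,dy\,dx$, whereas you integrate against $1$ and use $b_2\le1$ with strict inequality near $\partial D$.

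Item (4) is wrong as written. You set $\bar m_i(t)=\max_{x\in\bar D_i}m_i(t,x)\ge m_i(t,x)$ and claim $\mu_i^n(\mathcal L_i+\lambda_i m_i)\ge\mu_i^n(\mathcal L_i+\lambda_i\bar m_i)$, but item (3) gives the opposite inequality
$\mu_i^n(\mathcal L_i+\lambda_i m_i)\le\mu_i^n(\mathcal L_i+\lambda_i\bar m_i)=\mu_i^n(0)+\tfrac{\lambda_i}{T}\mathcal P(m_i)$,
an upper bound, which says nothing about $\mu_i^n(\mathcal L_i+\lambda_i m_i)$ turning positive. The obstruction is real: there is no $t$-only function lying \emph{below} $m_i$ whose time-average is controlled from below by $\mathcal P(m_i)$, because the region where $m_i(t,\cdot)$ is near its spatial maximum moves with $t$. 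The paper instead builds an explicit $T$-periodic sub-solution $\bar u(t,x)$, piecewise exponential in $t$, which during each time interval $[t_j,t_{j+1})$ is supported in a small ball $B(x_j,r_j)$ near the current argmax of $m_i(t,\cdot)$; there $\lambda_i m_i\bar u$ overwhelms the $O(1)$ dispersal loss once $\lambda_i\gg1$, so $\tfrac1t\ln\|\bar u(t,\cdot)\|$ tends to a positive number, and Propositions~\ref{comparison} and~\ref{spectrum-limit} give $\mu_i^n(\lambda_i)>0$.

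On item (3): your non-strict argument is the paper's. You are right that the strict inequality is the delicate part --- indeed the paper never proves it; its proof of (3) stops at the $\le$ chain of spectral radii. Your Duhamel/strong-positivity lemma handles the case where at least one of the two operators has a positive principal eigenfunction, which happens to cover every strict use of (3) in the paper (e.g.\ the comparisons against the $t$-only weight $\widetilde m_i(t)$ in Theorems~\ref{PSP-neum} and~\ref{PSP-peri}, whose operator always has a principal eigenfunction by (S2)). The residual case in which neither operator has a principal eigenvalue is the genuine difficulty you flag; your perturbation sketch is sensible but not a proof, and the paper leaves that case entirely unaddressed.
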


\begin{proof}

(1) follows from \cite[Proposition 3.11]{RaSh}.

(2) In the case of   $\lambda_i=0$, $\mu_i^n(0)$ is the principal eigenvalue of $\mathcal L_i$, since the condition (S2) holds.  And by Proposition  \ref{spectrum-autonomous} (1), $\mu_i^n(0)$ is the principal eigenvalue of $K_i-B_i$, associated with  an eigenfunction  $\phi_i\in  X_i^{++}$, such that
\be
\label{lambda1=0}
\int_D \kappa(y-x)\phi_1( y)dy-\phi_1( x)=\mu_1^n(0)\phi_1( x)
\ee
in the Dirichlet boundary condition case,
\be
\label{lambda2=0}
\int_D \kappa(y-x)\phi_2( y)dy-\int_Dk(y-x)dy\phi_2(x)=\mu_2^n(0)\phi_2( x)
\ee
in the Neumann boundary condition case, and
\be
\label{lambda3=0}
\int_{\R^N} \kappa(y-x)\phi_3( y)dy-\phi_3(x)=\mu_3^n(0)\phi_3(x)
\ee
in the periodic boundary condition case. In the Neumann and periodic boundary condition cases,  we have that $(\mu_2^n(0), \phi_2)=(0, 1)$ and $(\mu_3^n(0), \phi_3)=(0, 1)$ are eigenpairs, respectively.  Hence $\mu_i^n(0)=0$ for $i=2,3$. In the Dirichlet boundary case, we have
\begin{align*}
\mu_1^n(0)\int_D\phi_1^2(x)dx&=\int_D\int_D\kappa(y-x)\phi_1(y)\phi_1(x)dydx \phi_1-\int_D \phi_1^2(x)dx\\
&\le \int_D\int_D \kappa(y-x)\phi_1(y)\phi_1(x)dydx-\int_D\int_D \kappa(y-x)\phi_1^2(x)dydx\\
&=\int_D\int_D \kappa(y-x)\Big[\phi_1(y)\phi_1(x)-\frac{\phi_1^2(x)+\phi_1^2(y)}{2}\Big]dydx\\
&=-\frac12\int_D\int_D \kappa(y-x)(\phi_1(x)-\phi_1(y))^2dydx.
\end{align*}
This implies that
$\mu_1^n(0)<0$. Hence the proof is complete.

(3) Suppose that $m_{i}^1, m_{i}^2\in \mathcal X_i$, and $\lambda_2^1m_{i}^1\leq\lambda_i^2 m_{i}^2$. By Proposition \ref{comparison} (3), for any $u_0\in X_i^+$ and $t\geq s$,
\[
\Phi_i(t, s; \lambda_i^1m_{i}^1)u_0\leq \Phi_i(t, s; \lambda_i^2m_{i, }^2).
\]
This implies that
\[
r(\Phi_i(t, s; \lambda_i^1m_{i}^1))\leq r(\Phi_i(t, s; \lambda_i^2m_{i}^2)).
\]
By Proposition \ref{spectrum-sol-operator}, we have
\[
\mu_i^n(\mathcal L_i+\lambda_i^1m_{i}^1)\leq \mu_i^n(\mathcal L_i+\lambda_i^2m_{i}^2).
\]

(4) Assume that $\int_0^T \widetilde m_i(t)dt=\int_0^T \max_{x\in D_i} m_i(t, x)dt>0$, and
 we need to show that $\mu_i^n(\lambda_i)>0$ when $\lambda_i\gg1$.  We will prove the case of $i=1$,  since other cases can be proved similarly.

By the continuity of $m_1(t,x)$, there are $\delta>0$,  $x_0,x_1,\cdots,x_{n-1}\in D$,  $0=t_0<t_1<t_2<\cdots<t_n=T$, $r_0,r_1,\cdots,r_{n-1}\in \R^+$, and $m_0,m_1,\cdots,m_{n-1}\in\RR$ such that $\cup_{i=0}^{i=n-1}B(x_i, r_i)\subset D$, and  for $i=0,1,\cdots,n-1$
$$
m_i+\delta\le m_1(t,x)\le \widetilde m_1(t) \quad {\rm for}\quad t_{i}\le t\le t_{i+1},\,\, x\in B(x_i,r_i),
$$
and
$$
m_0(t_1-t_0)+m_1(t_2-t_1)+\cdots m_{n-1}(t_n-t_{n-1})>0.
$$
Let $\bar u(t,x)$ be the function defined as follows: for $ t_0\le t<t_1$,
$$
\bar u(t,x)=\begin{cases} e^{\lambda m_0 t}\quad &{\rm for}\,\, x\in B(x_0,r_0),\cr
0\quad &{\rm for}\,\, x\in\bar D\setminus B(x_0,r_0),
\end{cases}
$$
 for $ t_i\le t<t_{i+1} (i=1,2,\cdots,n-1)$,
$$
\bar u(t,x)=\begin{cases} e^{\lambda [m_0(t_1-t_0)+m_1(t_2-t_1)+\cdots m_{i-1}(t_i-t_{i-1})]+\lambda m_it}\quad &{\rm for }\,\, x\in B(x_i,r_i),\cr
0\quad &{\rm for } \,\ x\in \bar D\backslash B(x_i,r_i),
\end{cases}
$$
and for $0\le t<T$ and $k=1,2,\cdots$,
$$
\bar u(kT+t,x)=\bar u(kT,x)\bar u(t,x),\quad x\in\bar D.
$$
 We then have that for any $x\in\Bar D$, $\bar u(t,x)$ is differentiable in $t$  at all $t$ but $t_i+kT$ for $i=0,1,2,\cdots,n-1$ and $k=0,1,2,\cdots$. Moreover, for $\lambda\gg 1$, any  $t_i+kT< t<t_{i+1}+kT$,  and any $x\in\bar\Omega$,  we have
\begin{align*}
&\bar u_t(t,x)-\left[\int_D \kappa(y-x)\bar u(t,y)dy-\bar u(t,x)+\lambda_1 m_1(t,x)\bar u(t,x)\right]\\
&\le \bar u(t,x)\lambda_1 m_i-[-\bar u(t,x)+ \lambda_1 m_1(t,x) \bar u(t,x)]\\
&=\bar u(t,x)\lambda_1[m_i-m_1(t,x)]+\bar u(t,x)\\
&=\bar u(t,x)[\lambda(m_i-m(t,x))+1]\\
&\le 0
\end{align*}
for $\lambda\gg 1$,
where $i=0,1,\cdots,n-1$, $k=0,1,\cdots$.
Then by Proposition \ref{comparison},  for $\lambda\gg 1$,  $t\ge 0$ and $x\in\bar D$, we have
$$
u(t,x;u_0)\ge \bar u(t,x)
$$
for any $u_0\in X_1^+$ with  $u_0(x)\ge \bar u(0,x)$.
 For $\lambda\gg 1$, it then follows  from Proposition \ref{spectrum-limit}
that
\begin{align*}
\mu_1^n(\lambda_1)&\ge \lim_{t\to\infty}\frac{\ln\|\bar u(t,\cdot)\|_\infty}{t}\\
&=\frac{m_0(t_1-t_0)+m_1(t_2-t_1)+\cdots+m_{n-1}(t_n-t_{n-1})}{T}\\
&>0.
\end{align*}
The proof is thus complete.

(5) It suffices to  show that
for any $0\le\lambda_i^1<\lambda_i^2$,
\be
\label{convexity}
\mu_i^n\left(\frac{\lambda_i^1+\lambda_i^2}{2}\right)\le \frac{\mu_i^n(\lambda_i^1)+\mu_i^n(\lambda_i^2)}{2}.
\ee

Fix $0\le\lambda_i^1< \lambda_i^2$. By Lemma \ref{technical-lm} and Proposition \ref{pv-existence}, there are
$m_i^{1,k},m_i^{2,k}\in \mathcal{X}_i$  for $k=1,2,\cdots$ such that
$$
m_i(t,x)\le m_i^{j,k}(t,x)\quad {\rm for}\,\, j=1,2,\,\, k=1,2,3,\cdots,
$$
$$
m_i^{j,k}\to m_i\quad {\rm as}\,\, k\to\infty
$$
in $\mathcal{X}_i$ ($j=1,2$), and $\mu_i^n(\mathcal{L}_i+\lambda_i^j m_i^{j,k})$ is the principal eigenvalue of
$\mathcal{L}_i+\lambda_i^jm_i^{j,k}$ for $j=1,2$ and $k=1,2,\cdots$. It then suffices to prove
\begin{equation}
\label{aux-convexity}
\mu_i^n
\left(\frac{\lambda_i^1+\lambda_i^2}{2}\right)\le\frac{\mu_i^n(\mathcal{L}_i+\lambda_i^1m_i^{1,k})+\mu_i^n(\mathcal{L}_i+
\lambda_i^2m_i^{2,k})}{2}.
\end{equation}

Fix $k\ge 1$. Let $\mu_i^{n,j,k}=\mu_i^n(
\mathcal{L}_i+\lambda_i^j m_i^{j,k})$ ($j=1,2$).
Suppose that $\phi_i^{j,k}$ are positive eigenfunctions  of $\mathcal{L}_i+\lambda_i^j m_i^{j,k}$ corresponding to $\mu_i^{n,j,k}$. Let $\phi_i^k=\sqrt {\phi_i^{1,k}\phi_i^{2,k}}$. Note that
$\ds u_i^{1,k}(t,x)=e^{\mu_i^{n,j,k} t}\phi_i^{1,k}(t,x)$ is a solution of
$$
u_t=\int_{D} \kappa(y-x)u(t,y)dy-b_i(x) u(t,x)+\lambda_i^1 m_i^{1,k}(t, x) u(t,x),
$$
and $\ds u_i^{2,k}(t,x)=e^{\mu_i^{n,j,k}t}\phi_i^{2,k}(t,x)$ is a solution of
$$
u_t=\int_{D} \kappa(y-x)u(t,y)dy-b_i(x)u(t,x)+\lambda_i^2 m_i^{2,k}(t,x)u(t,x),
$$
where $D=\RR^N$ in the periodic boundary condition case.
Let
\[ u_i^{1,2,k}(t,x)=e^{\frac{\mu_i^{n,1,k}+\mu_i^{n,2,k}}{2}t}\sqrt{\phi_i^{1,k}(t,x)\phi_i^{2,k}(t,x)}.\]
Then, we have
\begin{align*}
&\p_t  u_i^{1,2,k}(t,x)\\
=&\frac{\mu_i^{n,1,k}+\mu_i^{n,2,k}}{2} e^{\frac{\mu_i^{n,1,k}+\mu_i^{n,2,k}}{2}t}\sqrt{\phi_i^{1,k}(t,x)\phi_i^{2,k}(t,x)}\\
& +e^{\frac{\mu_i^{n,1,k}+\mu_i^{n,2,k}}{2}t}\frac{\p_t \phi_i^{1,k}(t,x)\phi_i^{2,k}(t,x)+\phi_i^{1,k}(t,x)\p_t\phi_i^{2,k}(t,x)}{2\sqrt {\phi_i^{1,k}(t,x)\phi_i^{2,k}(t,x)}}\\
=&e^{\frac{\mu_i^{n,1,k}+\mu_i^{n,2,k}}{2}t}\frac{\int_D \kappa(y-x)[ \phi_i^{1,k}(t,y)\phi_i^{2,k}(t,x)+\phi_i^{1,k}(t,x)\phi_i^{2,k}(t,y)]dy}{
2\sqrt{\phi_i^{1,k}(t,x)\phi_i^{2,k}(t,x)}}\\
& +e^{\frac{\mu_i^{n,1,k}+\mu_i^{n,2,k}}{2}t}\frac{
-2b_i\phi_i^{1,k}(t,x)\phi_i^{2,k}(t,x)+(\lambda_i^1 m_i^{1,k} +\lambda_i^{2}m_i^{2,k})\phi_i^{1,k}(t,x)
\phi_i^{2,k}(t,x)}{2\sqrt{\phi_i^{1,k}(t,x)\phi_i^{2,k}(t,x)}}\\
\ge &e^{\frac{\mu_i^{n,1,k}+\mu_i^{n,2,k}}{2}t}\frac{\int_D \kappa(y-x)\left(2\sqrt{ \phi_i^{1,k}(t,y)\phi_i^{2,k}(t,x)}\sqrt{\phi_i^{1,k}(t,x)\phi_i^{2,k}(t,y)}\right)
dy}{
2\sqrt{\phi_i^{1,k}(t,x)\phi_i^{2,k}(t,x)}}\\
& -b_i(x)u_i^{1,2,k}(t,x)+\frac{\lambda_i^1+\lambda_i^2}{2} m_i(t,x)u_i^{1,2, k}(t,x)\\
\ge& \int_D \kappa(y-x)u_i^{1,2, k}(t,y)dy-b_i(x)u_i^{1,2, k}(t,x)+\frac{\lambda_i^1+\lambda_i^2}{2}m_i(t,x)u_i^{1,2,k}(t,x).
\end{align*}
Therefore, $u_i^{1,2,k}$ is a positive super-solution of
$$
u_t=\int_D \kappa (y-x)u(t,y)dy-b_i(x)u(t,x)+\frac{\lambda_i^1+\lambda_i^2}{2}m_i(t,x)u(t,x).
$$
This together with Propositions \ref{comparison} and  \ref{spectrum-limit} implies  that
\begin{align*}
\mu_i^n\left(\frac{\lambda_i^1+\lambda_i^2}{2}\right)\le \lim_{t\to\infty} \frac{\ln u_i^{1,2,k}(t,x)}{t}=\frac{\mu_i^{n,1,k}+\mu_i^{n,2,k}}{2}.
\end{align*}
This proves \eqref{aux-convexity}. Letting $k\to \infty$.  we get \eqref{convexity}.
\end{proof}

\begin{corollary}
\label{strict-convex}
Assume that $m_i(t,x)\not\equiv m_i(t)$ and $0\le \lambda_1^1<\lambda_i^2$.
If  $\mu_i^n(\mathcal L_i+\lambda_i^{j} m_i)$ ($j=1,2$) are the principal eigenvalue of $\mathcal L_i+\lambda_i^{j} m_i$,
then
$$
\mu_i^n\left(\frac{\lambda_i^1+\lambda_i^2}{2}\right)<\frac{\mu_i^n(\lambda_i^1)+\mu_i^n(\lambda_i^2)}{2}.
$$
\end{corollary}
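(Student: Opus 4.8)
\medskip

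\noindent\textit{Proof proposal.} The plan is to re-examine the super-solution used in the proof of Proposition~\ref{property-PSP-prop}(5) and to observe that, under the extra hypothesis $m_i(t,x)\not\equiv m_i(t)$, it is a \emph{strict} super-solution at some point, which upgrades the convexity inequality to a strict one. Since $\mu_i^n(\lambda_i^1)$ and $\mu_i^n(\lambda_i^2)$ are principal eigenvalues, I would first fix (strictly) positive $T$-periodic eigenfunctions $\phi^1,\phi^2\in\mathcal X_i^{++}$ of $\mathcal L_i+\lambda_i^1m_i$ and $\mathcal L_i+\lambda_i^2m_i$, so that $\p_t\phi^j=K_i\phi^j-b_i\phi^j+\lambda_i^jm_i\phi^j-\mu_i^n(\lambda_i^j)\phi^j$ for $j=1,2$. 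Set $\bar\lambda=\frac{\lambda_i^1+\lambda_i^2}{2}$, $\bar\mu=\frac{\mu_i^n(\lambda_i^1)+\mu_i^n(\lambda_i^2)}{2}$, $\psi=\sqrt{\phi^1\phi^2}\in\mathcal X_i^{++}$ and $u(t,x)=e^{\bar\mu t}\psi(t,x)$. Running the computation of Proposition~\ref{property-PSP-prop}(5) with $m_i^{1,k}=m_i^{2,k}=m_i$ (so that no approximation is needed and the only estimate used is a single application of the arithmetic--geometric mean inequality under the convolution), one obtains
\[
\p_t u(t,x)-\Big(\int_{D_i}\kappa(y-x)u(t,y)\,dy-b_i(x)u(t,x)+\bar\lambda m_i(t,x)u(t,x)\Big)=e^{\bar\mu t}\,g(t,x)\ge 0,
\]
where $g$ is continuous and $T$-periodic and (with $D_i=\R^N$ when $i=3$)
\[
g(t,x)=\int_{D_i}\kappa(y-x)\,\frac{\phi^1(t,x)\phi^2(t,y)+\phi^1(t,y)\phi^2(t,x)-2\sqrt{\phi^1(t,x)\phi^2(t,x)\phi^1(t,y)\phi^2(t,y)}}{2\sqrt{\phi^1(t,x)\phi^2(t,x)}}\,dy.
\]
Thus $u$ is a super-solution of $\p_tu=K_iu-b_iu+\bar\lambda m_iu$.

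The heart of the argument is to show that $g\not\equiv 0$. If $g\equiv 0$, then for every $(t,x)$ equality holds in the arithmetic--geometric mean inequality for all $y$ with $\kappa(y-x)>0$; since $\kappa$ is continuous with $\kappa(0)>0$, this forces $\phi^1(t,y)/\phi^2(t,y)=\phi^1(t,x)/\phi^2(t,x)$ for $y$ near $x$, so $x\mapsto\phi^1(t,x)/\phi^2(t,x)$ is locally constant, hence constant by connectedness of the spatial domain; write $\phi^1(t,x)=c(t)\phi^2(t,x)$ with $c>0$ of class $C^1$ and $T$-periodic. Substituting into the two eigenvalue equations and subtracting, the $K_i$ and $-b_i$ terms cancel and one is left with
\[
\frac{c'(t)}{c(t)}=(\lambda_i^1-\lambda_i^2)\,m_i(t,x)+\mu_i^n(\lambda_i^2)-\mu_i^n(\lambda_i^1)\qquad\text{for all }(t,x),
\]
so, since $\lambda_i^1\ne\lambda_i^2$, $m_i(t,x)$ depends on $t$ only, contradicting $m_i(t,x)\not\equiv m_i(t)$. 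Hence $g(t_*,x_*)>0$ for some $(t_*,x_*)$, and by continuity and $T$-periodicity of $g$ we may assume $0<t_*<T$.

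Finally I would convert this strict inequality into a strict spectral bound. Let $v(t,\cdot)=\Phi_i(t,0;\bar\lambda m_i)\psi(0,\cdot)$. By Proposition~\ref{comparison}(1), $v(t,\cdot)\le u(t,\cdot)$ for $t\ge0$, and $w:=u-v\ge 0$ solves $\p_tw=K_iw-b_iw+\bar\lambda m_iw+e^{\bar\mu t}g$ with $w(0,\cdot)=0$, so by variation of constants $w(T,\cdot)=\int_0^T e^{\bar\mu s}\Phi_i(T,s;\bar\lambda m_i)g(s,\cdot)\,ds$. Since $g(s,\cdot)\in X_i^+\setminus\{0\}$ for $s$ in a neighborhood of $t_*$, Proposition~\ref{comparison}(2) gives $\Phi_i(T,s;\bar\lambda m_i)g(s,\cdot)\gg 0$ there, hence $w(T,\cdot)\gg 0$; as $u(T,\cdot)=e^{\bar\mu T}\psi(0,\cdot)$ and $\psi(0,\cdot)\gg 0$ on the compact $\bar D_i$, this yields $v(T,\cdot)\le\theta\,e^{\bar\mu T}\psi(0,\cdot)$ for some $\theta\in(0,1)$. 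Iterating, using the periodicity identity $\Phi_i((n+1)T,nT;\bar\lambda m_i)=\Phi_i(T,0;\bar\lambda m_i)$, gives $v(nT,\cdot)\le\theta^n e^{n\bar\mu T}\psi(0,\cdot)$, so by Proposition~\ref{spectrum-limit},
\[
\mu_i^n\!\Big(\tfrac{\lambda_i^1+\lambda_i^2}{2}\Big)=\lim_{n\to\infty}\frac{\ln\|v(nT,\cdot)\|}{nT}\le\bar\mu+\frac{\ln\theta}{T}<\bar\mu=\frac{\mu_i^n(\lambda_i^1)+\mu_i^n(\lambda_i^2)}{2}.
\]
The main obstacle is the middle step, $g\not\equiv 0$: proving that equality throughout in the arithmetic--geometric mean estimate forces $m_i$ to be independent of $x$. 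This is precisely where the hypothesis $m_i(t,x)\not\equiv m_i(t)$ enters, and it relies on the $C^1$-regularity in $t$ and the strict positivity of the eigenfunctions together with connectedness of the spatial domain.
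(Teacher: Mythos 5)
Your proposal follows the same strategy as the paper: use the geometric-mean super-solution $u=e^{\bar\mu t}\sqrt{\phi^1\phi^2}$, show the arithmetic--geometric mean inequality is strict somewhere because $m_i(t,x)\not\equiv m_i(t)$ forces the ratio $\phi^1/\phi^2$ to be non-constant in $x$, and conclude a strict spectral bound. However, you fill in two points that the paper leaves implicit or treats loosely. First, the paper only rules out $\phi^1=c\,\phi^2$ with $c$ a global constant, whereas the relevant case for losing strictness in AM--GM is $\phi^1(t,\cdot)=c(t)\,\phi^2(t,\cdot)$ with a possibly $t$-dependent factor; your computation with $c(t)$, yielding $\tfrac{c'(t)}{c(t)}=(\lambda_i^1-\lambda_i^2)m_i(t,x)+\mu_i^n(\lambda_i^2)-\mu_i^n(\lambda_i^1)$ and hence $m_i$ independent of $x$, is the correct argument. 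Second, the paper asserts the strict inequality ``by the arguments of Proposition~\ref{property-PSP-prop}'' without explaining why a strict super-solution at one point upgrades the spectral estimate to a strict one; your variation-of-constants argument combined with Proposition~\ref{comparison}(2), the compactness of $\bar D_i$, the contraction factor $\theta\in(0,1)$ at time $T$, and iteration via Proposition~\ref{spectrum-limit} supplies that missing step cleanly. Overall this is a more rigorous rendering of the same route rather than a different proof.
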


\begin{proof}
We denote the principal eigenvalues of  $\mathcal{L}_i+\lambda_i^1 m_i$ and $\mathcal{L}_i+\lambda_i^2m_i$ by $\mu_i^n(\lambda_i^1)$ and  $\mu_i^n(\lambda_i^2)$,
 respectively. Let $\phi_i^j$ be a positive eigenfunction of $\mathcal{L}_i+\lambda_i^j m_i$ ($j=1,2$).
By the assumption that $m_i(t,x)\not \equiv m_i(t)$, we have $\frac{\phi_i^1(t,x)}{\phi_i^2(t,x)}\not\equiv$constant.
In fact, if $\phi_i^1(t,x)=c\phi_i^2(t,x)$ for some $c>0$, then we have
$$
\mathcal{L}_i \phi_i^1(t,x)+\lambda_i^1 m_i(t,x)\phi_i^1(t,x)=\mu_i^n(\lambda_i^1)\phi_i^1(t,x)
$$
and
$$
\mathcal{L}_i\phi_i^1(t,x)+\lambda_i^2 m_i(t,x)\phi_i^1(t,x)=\mu_i^n(\lambda_i^2)\phi_i^1(t,x).
$$
It then follows that
$$
\lambda_i^1 m_i(t,x)\phi_i^1(t,x)-\mu_i^n(\lambda_i^1)\phi_i^1(t,x)=
\lambda_i^2 m_i(t,x)\phi_i^1(t,x)-\mu_i^n(\lambda_i^2)\phi_i^1(t,x).
$$
This implies that
$$
m_i(t,x)=\frac{\mu_i^n(\lambda_i^1)-\mu_i^n(\lambda_i^2)}{\lambda_i^1-\lambda_i^2}\equiv {\rm constant}
$$
This is a contradiction. Hence $\frac{\phi_i^1(t,x)}{\phi_i^2(t,x)}\not\equiv$constant.

We then have
$$
\frac{\phi_i^1(t,y)\phi_i^2(t,x)+\phi_i^1(t,y)\phi_i^2(t,x)}{2\sqrt{\phi_i^1(t,x)\phi_i^2(t,x)}}>\sqrt{\phi_i^1(t,y)\phi_i^2(t,y)}.
$$
By the arguments of
Proposition \ref{property-PSP-prop},  we have
$$
\mu_i^n\left(\frac{\lambda_i^1+\lambda_i^2}{2}\right)<\frac{\mu_i^n(\lambda_i^1)+\mu_i^n(\lambda_i^2)}{2}.
$$
The corollary is thus proved.
\end{proof}

\section{Proofs of the main results}

In this section,   we prove our main results.

\subsection{Dirichlet boundary condition  case}

In this subsection, we prove  Theorem \ref{PSP-diri}  and Corollary \ref{PSP-diri-coro}.

\begin{proof} [Proof of Theorem \ref{PSP-diri}]
(1) We first assume that there is $\lambda_1^p>0$ such that $\mu_1^n(\lambda_1^p)=0$, and  prove that (D) holds, that is
$\int_0^T\widetilde m_1(t)dt=\int_0^T \max_{x\in \bar {D_1}} m_1(t, x)dt>0$.

 Assume that $\int_0^T\widetilde m_1(t)dt\le 0$. For any  $\lambda_1>0$, we use $\widetilde \mu_1^n(\lambda_1)$ to denote the principal spectrum point of  the following eigenvalue problem
$$
\begin{cases}
-\p_t u+\int_{D_1} \kappa(y-x)u(t,y)dy-u(t,x)+\lambda_1 \widetilde m_1(t)u=\widetilde\mu_1 u\quad \text{ in }\bar D,\cr
u(t+T,x)=u(t,x).
\end{cases}
$$
 By Proposition \ref{spectrum-autonomous} (2),  we have
 $$
\widetilde \mu_1^n(\lambda_1)=\mu_i^n(0)+\frac{\lambda_1}{T}\int_0^T \widetilde m(t)dt,
$$
where $\mu_1^n(0)<0$ by Proposition \ref{property-PSP-prop}(2). And by Proposition \ref{property-PSP-prop}(3),  we have
$$
\mu_1^n(\lambda_1)\le \widetilde\mu_1^n(\lambda_1),
$$
since $m_1(t, x)\leq \widetilde m_1(t)$ and $\lambda_1>0$.
Hence $\mu_1^n(\lambda_1)<0$ for any $\lambda\ge 0$. This is a
contradiction. Hence $\int_0^T\widetilde m_1(t)dt>0$.

Next, we assume that (D) holds, and prove that there is a unique $\lambda_1^p>0$ such that
$\mu_1^n(\lambda_1^p)=0$.

According to Proposition \ref{property-PSP-prop}(2),  $\mu_1^n(\lambda_1)<0$ for $\lambda_1=0$.  Meanwhile, from Proposition \ref{property-PSP-prop}(4), we have $\mu_1^n(\lambda_1)>0$ for $\lambda\gg1$. Thus
there is $\lambda_1^p>0$ such that $\mu_1^n(\lambda_1^p)=0$.

(2) Suppose there exist $0<\lambda_1^{p, 1}\leq \lambda_1^{p, 2}$ such that
$\mu_1^n(\lambda_1^{p, 1})=\mu_1^n(\lambda_1^{p, 2})=0$. We need to show that $\lambda_1^{p, 1}=\lambda_1^{p, 2}$.
Assume that $\lambda_1^{p,1}<\lambda_1^{p,2}$.
By the convexity of $\mu_1^n(\lambda_1)$, $\mu_1^n(\lambda_1)=0$ for $\lambda_1^{p,1}\le \lambda_1\le\lambda_1^{p,2}$.

If $m_1(t,x)\equiv m_1(t)$, we have $\mu_1^n(\lambda_1^{p,1})=\mu_1^n(0)+\lambda_1^{p,1}\widehat m_1=0$
and
 $\mu_1^n(\lambda_1^{p,2})=\mu_1^n(0)+\lambda_1^{p,2}\widehat m_1=0$. Note that $\mu_1^n(0)<0$.  We then must have $\widehat m_1>0$ and then
$$
 0=\mu_1^n(\lambda_1^{p,1})=\mu_1^n(0)+\lambda_1^{p,1}\widehat m_1<
\mu_1^n(\lambda_1^{p,2})=\mu_1^n(0)+\lambda_1^{p,2}\widehat m_1=0.
$$
This is a contradiction. Hence $\lambda_1^{p,1}=\lambda_1^{p,2}$.

Suppose that $m_1(t,x)\not\equiv m_1(t)$.
Assume that $\mu_1^n(\lambda_1^{p,1})$ is the principal eigenvalue of $\mathcal{L}_1+\lambda_1^{p,1}m_1$. Then for
$\lambda_1>\lambda_1^{p,1}$ with $\lambda_1-\lambda_1^{p,1}\ll 1$, $\mu_1^n(\lambda_1)$ is also the
principal eigenvalue of $\mathcal{L}_1+\lambda_1 m_1$.
By Corollary \ref{strict-convex}, we have $\mu_1^n(\lambda_1)<0$ for $\lambda_1$ with $\lambda_1-\lambda_1^{p,1}\ll 1$.
This is a contradiction.
Hence  $\mu_1^n(\lambda_1^{p,1})$ is not the principal eigenvalue of $\mathcal{L}_1+\lambda_1^{p,1}m_1$.
Similarly,  $\mu_1^n(\lambda_1^{p,2})$ is not the principal eigenvalue of $\mathcal{L}_1+\lambda_1^{p,2}m_1$.
We then have that  $\mu_1^n(\lambda_1^{p, 1})$ and
 $\mu_1^n( \lambda_1^{p, 2})$ are not  eigenvalues of $\mathcal L_1+\lambda_1^{p,1}m_1$ and
$\mathcal{L}_1+\lambda_1^{p,2}m_1$, respectively.  By Proposition \ref{pv-existence} (2), we have
\[
\mu_1^n(\lambda_1^{p,1})=h_{1,\max}(\lambda_1^{p,1})= -1+\lambda_1^{p, 1}m_{1,\max},
\]
 and
\[\mu_1^n(\lambda_1^{p,2})=h_{1,\max}(\lambda_1^{p,2})=-1+\lambda_1^{p, 2}m_{1,\max}.
\]
It thus follows that    $\lambda_1^{p, 1}=\lambda_1^{p, 2}$ if $\mu_1^n(\lambda_1^{p, 1})=\mu_1^n(\lambda_1^{p, 2})=0$.
This is also  a contradiction.

Therefore, $\lambda_1^{p,1}=\lambda_1^{p,2}$.
\end{proof}

\begin{proof}[Proof of Corollary \ref{PSP-diri-coro}]
(1) Assume that $m_1(t,x)\equiv m_1(t)$. Then  $ \mathcal{P}(m_1)=\int_0^T m_1(t)dt$ and
$$
\mu_1^n(\mathcal{L}_1+\lambda_1 m_1)=\mu_1^n(0)+\lambda_1 \widehat m_1.
$$
(1) then follows from Theorem \ref{PSP-diri}.

(2) Assume that $m_1(t,x)\equiv m_1(x)$. Then $\mathcal{P}(m_1)=T\cdot \max_{x\in\bar D_1}m_1(x)$. Then (2)  follows from
Theorem \ref{PSP-diri} and Proposition \ref{spectrum-autonomous}.
\end{proof}

\subsection{Neumann boundary condition case}

In this subsection, we prove  Theorem \ref{PSP-neum}  and Corollary \ref{PSP-neum-coro}.

\begin{proof}[Proof of Theorem \ref{PSP-neum}] We first prove (2).

Suppose that there exist $0<\lambda_2^{p, 1}\leq \lambda_2^{p, 2}$ such that $\mu_2^n(\lambda_2^{p, 1})=\mu_2^n(\lambda_2^{p, 2})=0$. We need to show that $\lambda_2^{p, 1}= \lambda_2^{p, 2}$.

Assume that $\lambda_2^{p,1}<\lambda_2^{p,2}$. By the convexity of $\mu_2^n(\lambda_2)$,
$\mu_2^n(\lambda_2)=0$ for
$\lambda_2^{p,1}\le \lambda_2\le\lambda_2^{p,2}$.

By the similar argument as in the Dirichlet boundary case,
  $\mu_2^n(\lambda_2^{p,1})$ and  $\mu_2^n(\lambda_2^{p,2})$ are not  eigenvalues of $\mathcal{L}_2+\lambda_2^{p,1}m_2$ and
 $\mathcal{L}_2+\lambda_2^{p,2}m_2$, respectively (the assumption $m_2(t,x)\not\equiv m_2(t)$ is used here).
 By Proposition \ref{pv-existence} (2), we have
\[
\mu_2^n(\lambda)=\widehat h_{2,\max}(\lambda)=\max_{x\in  {\bar D_2}}\left[ -b_2(x)+\lambda\widehat m_{2}(x)\right]=0 \quad \text{for all}\,\, \lambda\in[\lambda_2^{p,1},\lambda_2^{p,2}],
\]
where $b_2(x)=\int_D\kappa(y-x)dy$. Let $x_\lambda\in \bar D_2$ be such that
\be
\label{proof-neum-unique}
\max_{x\in \bar D_2}\left[ -b_2(x)+\lambda\widehat m_{2}(x)\right]=-b_2(x_\lambda)+\lambda \widehat m_2(x_\lambda)=\mu_2^n(\lambda)=0
\ee
for $\lambda\in [\lambda_2^{p,1},\lambda_2^{p,2}]$.  Note that $-b_2(x)<0$ for any $x\in\bar D_2$. Hence
we must have $ \widehat m_2(x_\lambda)>0$.  In particular, $\widehat m_2(x_{\lambda_2^{p,1}})>0$. We then have
$$
-b_2(x_{\lambda_2^{p,1}})+\lambda_2^{p,1}\widehat m_2(x_{\lambda_2^{p,1}})<-b_2(x_{\lambda_2^{p,1}})+\lambda_2^{p,2}
\widehat m_2(x_{\lambda_2^{p,1}})\le -b_2(x_{\lambda_2^{p,2}})+\lambda_2^{p,2}
\widehat m_2(x_{\lambda_2^{p,2}}),
$$
which contradicts to \eqref{proof-neum-unique}.

It thus follows that    $\lambda_2^{p, 1}=\lambda_2^{p, 2}$ if $\mu_2^n(\lambda_2^{p, 1})=\mu_2^n(\lambda_2^{p, 2})=0$.

Next, we prove (1).

 First
suppose that there is $\lambda_2^p>0$ such that $\mu_2^n(\lambda_2^p)=0$. We prove that
$\int_0^T \widetilde m_2(t)dt>0$ and $\int_D\widehat m_2(x)dx<0$ with $\widetilde m_2(t)=\max_{x\in \bar D_2}m_2(t, x)$ and $\widehat m_2(x)=\frac1T\int_0^Tm_2(t, x)dt$.

Assume that $\int_0^T \widetilde m_2(t)dt\le 0$. Note that $m_2(t,x)\le \widetilde m_2(t)$ and $m_2(t,x)\not\equiv \tilde m_2(t)$. Hence, by Proposition \ref{spectrum-autonomous}(2) and Proposition \ref{property-PSP-prop}(3), we have
$$
\mu_2^n(\lambda_2)< \lambda_2\frac{\int_0^T\widetilde m_2(t)dt}{T}\le 0
$$
for $\lambda_2> 0$.
This is a contradiction. Therefore $\int_0^T\widetilde m_2(t)dt>0$.

  Proposition \ref{property-PSP-prop}, together with  $\mu_2^n(\lambda_2^p)=0$ and (2), leads to $\mu_2^n(\lambda_2)<0$ for $0<\lambda_2\ll 1$.
Note that for $|\lambda_2|\ll 1$, $\mu_2^n(\lambda_2)$ is a principal eigenvalue, associated with a positive eigenfunction, denoted by $\phi_2^{\lambda_2}(t,x)$. Then
$$
-\frac{\p_t\phi_2^{\lambda_2}(t,x)}{\phi_2^{\lambda_2}(t,x)}+\int_{D_2} \kappa(y-x)\frac{\phi_2^{\lambda_2}(t,y)-\phi_2^{\lambda_2}(t,x)}{\phi_2^{\lambda_2}(t,x)}dy
+\lambda_2 m_2(t,x)=\mu_2^n(\lambda_2).
$$
Hence for $0<\lambda_2\ll 1$,
\begin{align*}
&\mu_2^n(\lambda_2)\cdot T\cdot |D_2| -\lambda_2 T \int_{D_2} \widehat m_2(x)dx\\
&=
\int_0^T \int_{D_2}\int_{D_2} \kappa(y-x)\frac{\phi_2^{\lambda_2}(t,y)-\phi_2^{\lambda_2}(t,x)}{\phi_2^{\lambda_2}(t,x)}dydxdt\\
&=\frac{1}{2}\int_0^T \int_{D_2}\int_{D_2} \kappa(y-x)\Big[\frac{\phi_2^{\lambda_2}(t,y)-\phi_2^{\lambda_2}(t,x)}{\phi_2^{\lambda_2}(t,x)}
+\frac{\phi_2^{\lambda_2}(t,x)-\phi_2^{\lambda_2}(t,y)}{\phi_2^{\lambda_2}(t,y)}\Big]dydxdt\\
&=\frac{1}{2}\int_0^T\int_{D_2}\int_{D_2} \kappa(y-x)\frac{(\phi_2^{\lambda_2}(t,y)-\phi_2^{\lambda_2}(t,x))^2}{\phi_2^{\lambda_2}(t,x)\phi_2^{\lambda_2}(t,y)}dydxdt.
\end{align*}
Since $m_2(t,x)\not \equiv m_2(t)$, $\phi_2^{\lambda_2}(t,y)\not\equiv \phi_2^{\lambda_2}(t,x)$. We then  have
$$
0>\mu_2^n(\lambda_2)\cdot T\cdot |D_2|> \lambda_2 T\int_{D_2}\widehat m_2(x)dx \quad \text{ for } 0<\lambda_2\ll 1,
$$
and then $\int_{D_2} \widehat m_2(x) dx<0$.

Next, suppose that $\int_0^T\widetilde m_2(t)dt>0$ and $\int_{D_2} \widehat m_2(x)dx<0$. We prove that there is $\lambda_2^p>0$ such that
$\mu_2^n(\lambda_2^p)=0$.

By Proposition \ref{property-PSP-prop} (4),
$\mu_2^n(\lambda_2)>0$ for $\lambda_2\gg 1$.
Note that $\mu_2(0)=0$ and for $0<\lambda_2\ll 1$, $\mu_2^n(\lambda_2)$ is a principal eigenvalue of \eqref{neumann-ori}.
Suppose that $\phi_2^{\lambda_2}(t,x)$ is a positive principal eigenfunction with $\|\phi_2^{\lambda_2}(\cdot,\cdot)\|_\infty=1$.  Note that $\phi_2^{\lambda_2}(t,x)$ is differentiable in $\lambda_2$
and $\phi_2^0(t,x)\equiv 1$.  Hence
$$
\phi_2^{\lambda_2}(t,y)-\phi_2^{\lambda_2}(t,x)=\phi_2^{\lambda_2}(t,y)-1-(\phi_2^{\lambda_2}(t,x)-1)=O(\lambda_2).
$$
Then by the above arguments,
\begin{align*}
&\mu_2^n(\lambda_2)\cdot T\cdot|D_2|\\
=&\lambda_2\int_{D_2} \widehat m_2(x)dx+\frac{1}{2}\int_0^T\int_{D_2}\int_{D_2} \kappa(y-x)\frac{(\phi_2^{\lambda_2}(t,y)-\phi_2^{\lambda_2}(t,x))^2}{\phi_2^{\lambda_2}(t,x)\phi_2^{\lambda_2}(t,y)}dydxdt\\
=&\lambda_2\int_{D_2}\widehat m_2(x)dx+O(\lambda_2^2).
\end{align*}
It then follows that $\mu_2^n(\lambda_2)<0$ for $0<\lambda_2\ll 1$.
Then there is $\lambda_2^p>0$ such that $\mu_2^n(\lambda_2^p)=0$.
\end{proof}

\begin{proof}[Proof of Corollary \ref{PSP-neum-coro}]
(1) Assume that $m_2(t,x)\equiv m_2(t)$. Then  by Proposition \ref{spectrum-autonomous} (2) and \ref{property-PSP-prop} (2), we have
$$
\mu_2^n(\mathcal{L}_2+\lambda_2 m_2)=\mu_2^n(0)+\lambda_2 \widehat m_2=\lambda_2\widehat m_2.
$$
It then follows that if $\widehat m_2\not =0$, then there is no positive principal spectrum point of $\mathcal{L}_2+\lambda_2 m_2$.
If $\widehat m_2=0$, then every positive $\lambda$ is a principal spectrum point of $\mathcal{L}_2+\lambda_2m_2$.

(2) Assume that $m_2(t,x)\equiv m_2(x)$. Then $\mathcal{P}(m_2)=T\cdot \max_{x\in\bar D_2}m_2(x)$. (2) then follows from
Theorem \ref{PSP-neum} and Proposition \ref{spectrum-autonomous}.

\end{proof}

\subsection{Periodic boundary condition case}

In this subsection, we prove  Theorem \ref{PSP-peri}  and Corollary \ref{PSP-peri-coro}

\begin{proof}[Proof of Theorem \ref{PSP-peri}]

First,  (2)  can proved by the similar arguments as those in Theorem \ref{PSP-neum}(2).

We prove (1) in the following.

Suppose that there is $\lambda_3^p>0$ such that $\mu_3^n(\lambda_3^p)=0$. We prove that
$\int_0^T \widetilde m_3(t)dt>0$ and $\int_{D_3}\widehat m_3(x)dx<0$.

Assume that $\int_0^T \widetilde m_3(t)dt\le 0$. Note that $m_3(t,x)\le \widetilde m_3(t)$ and $m_3(t,x)\not\equiv \widetilde m_3(t)$. Hence
$$
\mu_3^n(\lambda_3)<\frac{\lambda_3}T\int_0^T\widetilde m_3(t)dt\le 0
$$
for $\lambda_3> 0$.
This is a contradiction. Therefore $\int_0^T\widetilde m_3(t)dt>0$.

 Proposition \ref{property-PSP-prop}, together with $\mu_3^n(\lambda_3^p)=0$,
leads to $\mu_3^n(\lambda_3)<0$ for $0<\lambda_3\ll 1$.
Note that for $|\lambda_3|\ll 1$, $\mu_3^n(\lambda_3)$ is a principal eigenvalue. Suppose that $\phi_3^{\lambda_3}(t,x)$ is a positive principal eigenfunction. Then
$$
-\frac{\p_t\phi_3^{\lambda_3}(t,x)}{\phi_3^{\lambda_3}(t,x)}+\int_{\RR^N} \kappa(y-x)\frac{\phi_3^{\lambda_3}(t,y)-\phi_3^{\lambda_3}(t,x)}{\phi_3^{\lambda_3}(t,x)}dy
+\lambda_3 m_3(t,x)=\mu_3^n(\lambda_3).
$$
Let
$$
\widetilde \kappa(z)=\sum_{(k_1,k_2,\cdots,k_N)\in\ZZ^N}\kappa(z+(k_1p_1,k_2p_2,\cdots,k_Np_N)).
$$
Then $\widetilde \kappa(-z)=\widetilde \kappa(z)$ and
$$
\int_{\RR^N}\kappa(y-x)\phi_3^{\lambda_3}(t,y)dy=\int_{D_3}\widetilde \kappa(y-x) \phi_3^{\lambda_3}(t,y)dy.
$$
Hence for $0<\lambda_3\ll 1$,
\begin{align*}
&\mu_3^n(\lambda_3)\cdot T\cdot |D_3| -\lambda_3T \int_{D_3} \widehat m_3(x)dx\\
&=
\int_0^T \int_{D_3}\int_{D_3}  \widetilde\kappa(y-x)\frac{\phi_3^{\lambda_3}(t,y)-\phi_3^{\lambda_3}(t,x)}{\phi_3^{\lambda_3}(t,x)}dydxdt\\
&=\frac{1}{2}\int_0^T \int_{D_3}\int_{D_3}\widetilde \kappa(y-x)\Big[\frac{\phi_3^{\lambda_3}(t,y)-\phi_3^{\lambda_3}(t,x)}{\phi_3^{\lambda_3}(t,x)}
+\frac{\phi_3^{\lambda_3}(t,x)-\phi_3^{\lambda_3}(t,y)}{\phi_3^{\lambda_3}(t,y)}\Big]dydxdt\\
&=\frac{1}{2}\int_0^T\int_{D_3}\int_{D_3}\widetilde \kappa(y-x)\frac{(\phi_3^{\lambda_3}(t,y)-\phi_3^{\lambda_3}(t,x))^2}{\phi_3^{\lambda_3}(t,x)\phi_3^{\lambda_3}(t,y)}dydxdt.
\end{align*}
Since $m_3(t,x)\not \equiv m_3(t)$, $\phi_3^{\lambda_3}(t,y)\not\equiv \phi_3^{\lambda_3}(t,x)$. Hence for $0<\lambda_3\ll 1$,
$$
0>\mu_3^n(\lambda_3)\cdot T\cdot |D_3|> \lambda_3 T\int_{D_3}\widehat m_3(x)dx
$$
and then $\int_{D_3} \widehat m_3(x) dx<0$.

Next, suppose that $\int_0^T\widetilde m_3(t)dt>0$ and $\int_{D_3} \widehat m_3(x)dx<0$. We prove that there is $\lambda_3^p>0$ such that
$\mu_3^n(\lambda_3^p)=0$.

By Proposition \ref{property-PSP-prop} (4),
$\mu_3^n(\lambda_3)>0$ for $\lambda_3\gg 1$.
Note that $\mu_3^n(0)=0$ and for $0<\lambda_3\ll 1$, $\mu_3^n(\lambda_3)$ is a principal eigenvalue of \eqref{periodic-ori}.
Suppose that $\phi_3^{\lambda_3}(t,x)$ is a positive principal eigenfunction with $\|\phi_3^{\lambda_3}(\cdot,\cdot)\|_\infty=1$.  Note that $\phi_3^{\lambda_3}(t,x)$ is differentiable in $\lambda_3$
and $\phi_3^0(t,x)\equiv 1$.  Hence
$$
\phi_3^{\lambda_3}(t,y)-\phi_3^{\lambda_3}(t,x)=\phi_3^{\lambda_3}(t,y)-1-(\phi_3^{\lambda_3}(t,x)-1)=O(\lambda_3).
$$
Then by the above arguments,
\begin{align*}
&\mu_3^n(\lambda_3)\cdot T\cdot|D_3|\\
=&\lambda_3\int_{D_3} \widehat m_3(x)dx+\frac{1}{2}\int_0^T\int_{D_3}\int_{D_3} \widetilde\kappa(y-x)\frac{(\phi_3^{\lambda_3}(t,y)-\phi_3^{\lambda_3}(t,x))^2}{\phi_3^{\lambda_3}(t,x)\phi_3^{\lambda_3}(t,y)}dydxdt\\
=&\lambda_3\int_{D_3}\widehat m_3(x)dx+O(\lambda_3^2).
\end{align*}
It then follows that $\mu_3^n(\lambda_3)<0$ for $0<\lambda_3\ll 1$.
Then there is $\lambda_3^p>0$ such that $\mu_3^n(\lambda_3^p)=0$.
\end{proof}

\begin{proof}[Proof of Corollary \ref{PSP-peri-coro}]
It can be proved by a similar argument as in the proof of Corollary \ref{PSP-neum-coro}.
\end{proof}

\subsection{Upper bounds and the existence of principal eigenvalues}

In this subsection, we prove Theorem \ref{upper-bounds}.

\begin{proof} [Proof of Theorem \ref{upper-bounds}]
(1) First, we prove the upper bound of \eqref{EP-lambdai} in the case of $i=1$. If   \eqref{average-weight-pev} has a unique positive principal spectrum point $\lambda_1^p(\widehat m_1)$, we need to show  that   \eqref{EP-lambdai} also has a unique positive principal spectrum point $\lambda_1^p(m_1)$, and $\lambda_1^p(m_1)\leq \lambda_1^p(\widehat m_1)$.

By Corollary \ref{PSP-diri-coro} (2), \eqref{average-weight-pev} has a unique positive principal spectrum point $ \lambda_1^p(\widehat m_1)$ if and only if
\be
\label{upper-bound-diri}
\widehat m_1(x_0)>0 \quad \text{ for some } x_0\in D_1,
\ee
where $\widehat m_1(x_0)=\frac1T\int_0^Tm_1(t, x_0)dt$. Since $\int_0^Tm_1(t, x_0)dt\leq \int_0^T\max_{x\in \bar D_1}m_1(t, x)dt$, we know that $m_1(t, x)$ satisfies (D), that is
\[
\mathcal P(m_1)=\int_0^T\max_{x\in \bar D_1}m_1(t, x)dt>0.
\]
By Theorem \ref{PSP-diri}, we know that  \eqref{EP-lambdai} also has a unique positive principal spectrum point $\lambda_1^p(m_1)$.

To prove the upper bound of $\lambda_1^p(m_1)$, denote the principal spectrum point of \eqref{average-pev} by $\mu_1^n(\lambda_1,\widehat m_1)$, and the principal spectrum point of \eqref{EP-mui}  by $\mu_1^n(\lambda_1,m_1)$.
We have
\[
\mu_1^n(\lambda_1^p(\widehat m_1),\widehat m_1)\leq \mu_1^n(\lambda_1^p(\widehat m_1),m_1))
\]
by Proposition \ref{property-PSP-prop} (3).
By Definition \ref{PSP-def},  we have
\[
\mu_1^n(\lambda_1^p(\widehat m_1),\widehat m_1)=\mu_1^n(\lambda_1^p(m_1),m_1)=0.
\]
Hence,
\[
\mu_1^n(\lambda_1^p(m_1),m_1)\leq \mu_1^n( \lambda_1^p(\widehat m_1),m_1).
\]
By Proposition \ref{property-PSP-prop} (3), we have
\[
\lambda_1^p(m_i)\leq \lambda_1^p(\widehat m_i).
\]
The proof in the case of $i=2$ or $i=3$ is similar to the case of $i=1$. So we omit it.

(2) (i) It follows from the fact that  the conditions in (i) imply that (S1) holds.

(ii) It follows from (1) and Proposition \ref{pv-existence}.
\end{proof}

\subsection{Applications}
In this section, we apply Theorem \ref{PSP-diri}, \ref{PSP-neum} and \ref{PSP-peri} to study the existence of positive time-periodic  solution of the KPP type equations  \eqref{n-kpp-diri}, \eqref{n-kpp-neum}, and \eqref{n-kpp-peri}.

Recall  the KPP type equation   \eqref{n-kpp-diri} (\eqref{n-kpp-neum} or \eqref{n-kpp-peri}) with $f_i(t, x, u) (i=1, 2, 3)$ satisfying (F).  The eigenvalue problems  of   \eqref{n-kpp-diri}, \eqref{n-kpp-neum}, and \eqref{n-kpp-peri} linearized at $u=0$ are
\begin{equation}
\label{KPP-diri-L}
\begin{cases}
 -\p_t u(t, x)+\left[\int_D \kappa(y-x)u(t, y)dy-u(t, x)\right]+\lambda_1 m_1(t,x)u(t, x)=\mu_1  u,\\
u(t+T, x)=u(t, x).\\
u(t, x)\geq0,
\end{cases}
\end{equation}
for the Dirichlet boundary condition,
\begin{equation}
\label{KPP-neum-L}
\begin{cases}
 -\p_t u(t, x)+\int_D \kappa(y-x)[u(t, y)-u(t, x)]dy+\lambda_2 m_2(t,x)u(t, x)=\mu_2  u,\\
u(t+T, x)=u(t, x),\\
u(t, x)\geq 0
\end{cases}
\end{equation}
for the Neumann boundary condition, and
 \begin{equation}
\label{KPP-peri-L}
\begin{cases}
 -\p_t u(t, x)+\int_{\R^N} \kappa(y-x)[u(t, y)-u(t, x)]dy+\lambda_3 m_3(t,x)u(t, x)=\mu_3  u,\\
u(t+T, x)=u(t, x+p_j{\bf e_j})=u(t, x),\\
u(t, x)\geq 0
\end{cases}
\end{equation}
for the periodic boundary condition, where $m_i(t, x)= f_i(t, x, 0)$. It is assumed that $m_1$, $m_2$, and $m_3$ satisfies (D), (N), and (P), respectively.

Denote the principal spectrum point of \eqref{KPP-diri-L}, \eqref{KPP-neum-L}, and \eqref{KPP-peri-L} by $\mu_1^n(\lambda_1)$, $\mu_2^n(\lambda_2)$, and $\mu_3^n(\lambda_3)$, respectively.
Suppose $\lambda_i^p$ is such that $\mu_i^n(\lambda_i^p)=0$, we need to show that  \eqref{n-kpp-diri} (\eqref{n-kpp-neum} or \eqref{n-kpp-peri}) admits a unique positive time-periodic solution $u^*(t, x)$ if and only if   $\lambda_i>\lambda_i^p$.
\begin{proof}[Proof of Theorem \ref{KPP}]

We only prove the case of $i=1$, and other cases ($i=2$ or $i=3$) can be shown similarly.

First, we show if $\lambda_1>\lambda_1^p$, then  \eqref{n-kpp-diri} admits a unique positive time-periodic solution $u^*(t, x)$.
By \cite[Theorem E]{RaSh}, the KPP type equation  \eqref{n-kpp-diri} has a unique positive time periodic solution,  if  $f_1$ satisfies the following monostable assumptions:

\medskip

 \noindent {\bf (H1)} {\it  $f_1$ is $C^1$ in $t\in\RR$ and  $C^3$ in $(x,u)\in \RR^N\times\RR$; $f_1(t, x, u)<0$ for $u\gg 1$ and $\partial _u f_1(t, x, u)<0$ for $u\geq0$};
 $f_1(t+T,x,u)=f_1(t,x,u)$.

\medskip

\noindent {\bf (H2)} {\it   $\mu_1^n(\lambda_1)>0$, where $\mu_1^n(\lambda_1)$ is
the principle spectrum  point of \eqref{KPP-diri-L}.
}

\medskip

The condition (H1) is exactly (F), so we only need to show that (H2) is satisfied if $\lambda_1>\lambda_1^p$. Since $m_1=f_1(t, x, 0)$ satisfies (D), we have $\mu_1^n(\lambda_1^p)=0$ by Theorem \ref{PSP-diri}. Since $\lambda_1>\lambda_1^p$, we know that $\mu_1^n(\lambda_1)>\mu_1^n(\lambda_1^p)$
  by Theorem \ref{PSP-diri} and  \ref Proposition \ref{property-PSP-prop} (3). It follows that  $\mu_1^n(\lambda_1)>0$. Hence (H2) is satisfied.

%
%

Second, consider \eqref{n-kpp-diri},
and we need to show that if \eqref{n-kpp-diri} has a solution, then $\lambda_1>\lambda_1^p$, where $\lambda_1^p$ is the unique positive number such that $\mu_1^n(\lambda_1^p)=0$.

Suppose that   \eqref{n-kpp-diri} admits a positive  solution  $u^*(t, x)$.
Then
$$
-\p_t u^*(t,x)+\int_D\kappa(y-x)u^*(t,y)dy-u^*(t,x)+\lambda_1 f_1(t,x,u^*(t,x)) u^*(t,x)=0.
$$
This implies that
$\mu_1^n(\mathcal{L}_1+\lambda_1 m^*_1)=0$, where $m^*_1(t,x)=f_1(t,x,u^*(t,x))$.
Note that $f_1(t,x,0)>f_1(t,x,u^*(t,x))$. Hence $\mu_1^n(\lambda_1)>0$. Therefore,
$\lambda_1^p<\lambda_1.$
\end{proof}


\begin{thebibliography}{99}


\bibitem{AlTrLiMa}  A. Alvino, G. Trombetti, P.-L. Lions, S. Matarasso,
Comparison results for solutions of elliptic problems via symmetrization,
Ann. Inst. H. Poincar\'e Anal. Non Linéaire 16 (1999), no. 2, 167-188.



\bibitem{AnMaRoTo}
F. Andreu-Vaillo, J. M. Maz\'on, J.D. Rossi, J. Toledo-Melero,
Nonlocal diffusion problems,
Mathematical Surveys and Monographs, AMS, Providence, Rhode Island, 2010.


\bibitem{BaZh}  P. Bates, G. Zhao,
Existence, uniqueness and stability of the stationary solution to a nonlocal evolution equation arising in population dispersal,
J. Math. Anal. Appl., {\bf 332} (2007),  428--440.



\bibitem{BeHe} A. Beltramo, P. Hess,
On the principal eigenvalue of a periodic-parabolic operator,
Comm. Partial Differential Equations, {\bf 9} (1984),  919-941.




\bibitem{Boc} J. Bochenek,
On some linear eigenvalue problems with an indefinite weight function,
Univ. Iagel. Acta Math. (1988), no. 27, 315-323.


\bibitem{Bo} M. B{\^o}cher,
The smallest characteristic numbers in a certain exceptional case,
Proc. Amer. Math. Soc. {\bf 21} (1914),  no. 1, 6-9.



\bibitem{BrCoFl}  K. J. Brown, C. Cosner, J. Fleckinger,
Principal eigenvalues for problems with indefinite weight function on $\R^n$,
Proc. Amer. Math. Soc. {\bf 109} (1990),  no. 1, 147-155.


 \bibitem{BrLi} K. J. Brown, S.S. Lin,
On the existence of positive eigenfunctions for an eigenvalue problem
   with indefinite weight function,
J. Math. Anal. Appl.{\bf 75} (1980), no. 1, 112-120.







\bibitem{CaCo}  R. S. Cantrell, C. Cosner.
Diffusive logistic equations with indefinite weights: population models in disrupted environments. II.,
SIAM J. Math. Anal. {\bf 22} (1991),  no. 4, 1043-1064.


\bibitem{CaCo1}  R. S. Cantrell, C. Cosner,
Spatial ecology via reaction-diffusion equations, Wiley Series in Mathematical and Computational Biology,
John Wiley $\&$ Sons, Ltd., Chichester, 2003.  ISBN: 0-471-49301-592D40.


\bibitem{CaCo2}   R. S. Cantrell, C. Cosner.
Diffusive logistic equations with indefinite weights: population models in disrupted environments,
Proc. Roy. Soc. Edinburgh Sect. A {\bf 112} (1989),  no. 3-4, 293-318.



\bibitem{ChRe}A. Chmaj, X. Ren,
{Homoclinic solutions of an integral equation: existence and stability,}
{J. Differential Equations,} {\bf 155} (1999), no. 1, 17-43.




\bibitem{Co}   C. Cosner,
{Eigenvalue problems with indefinite weights and reaction-diffusion models in population dynamics},
{Oxford Sci. Publ., Oxford Univ. Press}, New York, 1990. 35-02.




\bibitem{CoCuPo}  C. Cosner, F. Cuccu,  G. Porru,
{Optimization of the first eigenvalue of equations with indefinite weights},
{Adv. Nonlinear Stud.} {\bf 13} (2013),  no. 1, 79-95.



\bibitem{Cov}J. Coville,
{On a simple criterion for the existence of a principal eigenfunction of some nonlocal operators},
{J. Differential Equations}, {\bf 249} (2010), 2921--2953.

\bibitem{CoDaMa1} J. Coville, J. D\'avila,  S. Mart\'inez,
{Existence and uniqueness of solutions to a nonlocal equation with monostable nonlinearity},
{SIAM J. Math.
Anal.} { \bf 39} (2008), no. 5, 1693-1709.


\bibitem{Dan}
  D. Daners,
{ Periodic-parabolic eigenvalue problems with indefinite weight functions},
{ Arch. Math.} {\bf  68}  (1997), no. 5, 388-397.


\bibitem{Spectral theory} D. E. Edmunds and W. D. Evans,
{Spectral Theory and Differential Operators,}
The Clarendon Press Oxford University Press, New York, 1987.



\bibitem{Ev} L. Evans,
{Partial differential equations}.
{Graduate Studies in Mathematics}, 19. American Mathematical Society, Providence, RI, 1998.


\bibitem{FeIsPe}  E. Feireisl, F. Issard-Roch, H. Petzeltov\'a,
{A non-smooth version of the Lojasiewicz-Simon theorem with applications to non-local phase-field systems,}
{J. Differential Equations,} {\bf 199} (2004), no. 1, 1-21.



\bibitem{Fif} P. Fife,
 Some nonclassical trends in parabolic and parabolic-like evolutions,
 Trends in nonlinear analysis, 153--191, Springer, Berlin, 2003.




\bibitem{Fisher}
 R. A. Fisher,
 {The wave of advance of advantageous genes},
{ Ann. Eugen.}, {7} (1937),   335-369.





\bibitem{GaRo}  J. Garc\'ia-Mel\'an and J. D. Rossi,
{On the principal eigenvalue of some nonlocal diffusion problems,}
{J. Differential Equations}, {\bf 246} (2009),  21--38.



 \bibitem{GoLa}J.-P. Gossez, E. Lami Dozo,
{On the principal eigenvalue of a second order linear elliptic problem},
 { Arch. Rational Mech. Anal.} {\bf 89} (1985), no. 2, 169-175.



\bibitem{GrHiHuMiVi} M. Grinfeld, G. Hines, V. Hutson, K. Mischaikow and G. T. Vickers,
Non-local dispersal,
 \emph{Differential Integral Equations}, {\bf 18} (2005),  1299--1320.


\bibitem{Ha} K.P. Hadeler,
{Reaction transport systems in biological modelling, in Mathematics inspired by biology,}
Lecture Notes in Math., 1714, Springer, Berlin, 1999.





\bibitem{He}  P. Hess,
{On the spectrum of elliptic operators with respect to indefinite weights}, Proceedings of the symposium on operator theory (Athens, 1985),
{Linear Algebra Appl. } {\bf 84} (1986),  99-109.




\bibitem{He0} P. Hess,
{Periodic-parabolic boundary value problems and positivity},
{Pitman Research Notes in Mathematics Series}, 247.
{Longman Scientific $\&$ Technical, Harlow; copublished in the United States with John Wiley $\&$ Sons, Inc.}, New York, 1991.  ISBN: 0-582-06478-3


 \bibitem{HeKo}P. Hess, T. Kato,
{On some linear and nonlinear eigenvalue problems with an indefinite weight function},
{Comm. Partial Differential Equations} {\bf 5} (1980), no. 10, 999-1030.


\bibitem{HessWein} P. Hess and H. Weinberger,
{Convergence to spatial-temporal clines in the Fisher equation with time-periodic fitnesses},
{ J. Math. Biol.} {28} (1990), no. 1, 83-98.


\bibitem{HeShZh}  G. Hetzer, W. Shen and A. Zhang,
{Effects of spatial variations and dispersal strategies on principal eigenvalues of dispersal operators and spreading speeds of monostable equations,}
{Rocky Mountain Journal of Mathematics}, {\bf 43} (2013),  489--513.




  \bibitem{HiKaLa} M. Hinterm\"{u}ller, C.-Y. Kao, A. Laurain,
{Principal eigenvalue minimization for an elliptic problem with indefinite weight and Robin boundary conditions},
 {Appl. Math. Optim.} {\bf 65} (2012), no. 1, 111-146.



\bibitem{HuGr} V. Hutson, M. Grinfeld,
 Non-local dispersal and bistability,
 \emph{European J. Appl. Math.}, {\bf 17} (2006), no. 2, 221–232.

\bibitem{HuMaMiVi}  V. Hutson, S. Martinez, K. Mischaikow and G. T. Vickers,
{The evolution of dispersal,}
{J. Math. Biol.}, {\bf 47} (2003),  483--517.






\bibitem{HuShVi} V. Hutson, W. Shen and G. T. Vickers,
{Spectral theory for nonlocal dispersal with periodic or almost-periodic time dependence,}
{Rocky Mountain Journal of Mathematics}, {\bf 38} (2008),  1147--1175.


\bibitem{KaLoSh1}C.-Y. Kao, Y. Lou and W. Shen,
{Random dispersal vs non-Local dispersal},
{Discrete and Continuous Dynamical Systems}, {\bf 26} (2010), 551--596.




  \bibitem{KaLoYa} C.-Y. Kao, Y. Lou, E. Yanagida,
{Principal eigenvalue for an elliptic problem with indefinite weight on cylindrical domains},
{Math. Biosci. Eng.} {\bf 5} (2008), no. 2, 315-335.


\bibitem{Kolmo}
 A. Kolmogorov, I. Petrowsky, N. Piscunov,
{A study of the equation of diffusion with increase in the quantity of matter, and its application to a biological problem}, {Bjul. Moskovskogo Gos, Univ.}, {1}(6): 1-25 (1937).







 \bibitem{Lo-Go0}J. L\'opez-G\'omez,
{On linear weighted boundary value problems}.  {Partial differential equations}(Han-sur-Lesse, 1993),  188-203,
{Math. Res.}, 82, Akademie-Verlag, Berlin, 1994.


 \bibitem{Lo-Go} J. L\'opez-G\'omez,
{The maximum principle and the existence of principal eigenvalues for some linear weighted boundary value problems},
 {J. Differential Equations} {\bf 127} (1996), no. 1, 263-294.

  \bibitem{LoYa} Y. Lou, Yuan, E. Yanagida
 {Minimization of the principal eigenvalue for an elliptic boundary value
   problem with indefinite weight, and applications to population dynamics},
{Japan J. Indust. Appl. Math.} {\bf 23} (2006), no. 3, 275-292.


\bibitem{MaMi} A. Manes, A. M. Micheletti,
 {Un'estensione della teoria variazionale classica degli autovalori per operatori ellittici del secondo ordine}.
{Boll. Un. Mat. Ital.} (4) {\bf7} (1973), 285-301.


\bibitem{Nad1} G. Nadin,
{Existence and uniqueness of the solutions of a space-time periodic reaction-diffusion equation},
{ J. Differential Equation} 249 (2010), no. 6, 1288-1304.


\bibitem{Nadin}  G. Nadin
{The principal eigenvalue of a space-time periodic parabolic operator},
{Ann. Mat. Pura Appl. } ,{\bf 4} 188 (2009), no. 2  269--295.



\bibitem{OtDuAl}  H. G. Othmer, S. R. Dunbar,  W. Alt,
{Models of dispersal in biological systems,}
{J. Math. Biol.}, {\bf 26} (1988), no. 3, 263–298.



 \bibitem{RaSh} N. Rawal and W. Shen,
{Criteria for the existence and lower bounds of principal eigenvalues of time periodic nonlocal dispersal operators and applications,}
{J. Dynam. Differential Equations}, {\bf 24} (2012),  927--954.






\bibitem{SeHe} S. Senn, P. Hess,
 {On positive solutions of a linear elliptic eigenvalue problem with Neumann boundary conditions},
{Math. Ann.}{\bf 258} (1981), no. 4, 459-470.


\bibitem{ShXi0} W. Shen and X. Xie, On principal spectrum points/principal eigenvalues of nonlocal dispersal operators and applications,  { Discrete Contin. Dyn. Syst.} {\bf  35} (2015), no. 4, 1665-1696.

\bibitem{ShXi1}
W. Shen and X. Xie,
{Approximations of random dispersal operators/equations by nonlocal dispersal operators/equations},
\emph{Journal of Differential Equations}, to appear.




\bibitem{ShZh0}k W. Shen and A. Zhang,
{Spreading speeds for monostable equations with nonlocal dispersal in space periodic habitats,}
{Journal of Differential Equations} {\bf 249} (2010),  747--795.



\bibitem{St}
 I. Stoleriu,
{Integro-differential equations in materials science,}
  Ph.D thesis, University of Strathclyde, Glasgow, 2001.




\bibitem{Zhaox} 
X.-Q. Zhao,
{Global attractivity and stability in some monotone discrete dynamical systems},
{Bull. Austral. Math. Soc.} {53} (1996), no. 2,  305-324.




\end{thebibliography}
\end{document}